\documentclass[11pt,reqno]{amsart}
\usepackage[T1]{fontenc}
\usepackage{graphicx}
\usepackage{setspace}
\usepackage{caption}
\usepackage{empheq}

\usepackage{color}
\definecolor{MyLinkColor}{rgb}{0,0,0.4}

\newtheorem{thm}{Theorem}[section]

\newtheorem{lem}[thm]{Lemma}

\theoremstyle{remark} 
\newtheorem{rem}[thm]{Remark}
\newtheorem{obs}[thm]{Observation}

\setlength{\oddsidemargin}{7.3mm}
\setlength{\evensidemargin}{7.3mm}
\setlength{\textwidth}{152mm}
\setlength{\textheight}{221mm}
\setlength{\voffset}{-5mm}

\numberwithin{equation}{section}   

\title[Water flows with discontinuous vorticity and stagnation points]{Gravity water flows with discontinuous vorticity and stagnation points}

\author[C. I. Martin]{Calin Iulian Martin}
\address{Institut  f\" ur Mathematik, Universit\" at Wien, Oskar-Morgenstern-Platz 1,
1090 Wien, Austria}
\email{calin.martin@univie.ac.at}

\author[B.--V. Matioc]{Bogdan--Vasile Matioc}
\address{Institut f\" ur Angewandte Mathematik, Leibniz Universit\" at Hannover, Deutschland}
\email{matioc@ifam.uni-hannover.de}

\subjclass[2010]{35J60, 76B03, 76B15, 47J15}
\keywords{Irregular vorticity; stagnation points; gravity waves}


\usepackage[colorlinks=true,linkcolor=MyLinkColor,citecolor=MyLinkColor]{hyperref} 

\begin{document}

\begin{abstract}
We construct small-amplitude steady periodic gravity water waves arising as the free surface of water flows that  contain stagnation points and possess 
a discontinuous distribution of vorticity in the sense that the flows consist  of two layers of constant  but different vorticities.
We also describe the streamline pattern in the moving frame for the constructed flows.
\end{abstract}

\maketitle

\section{Introduction}\label{Sec1}
We present here a study of steady periodic traveling water waves that propagate at the free surface of a two-dimensional inviscid and incompressible fluid of finite depth allowing for stagnation points and
for a discontinuous distribution of vorticity. 
More precisely, we consider water waves interacting with two vertically superposed currents of different constant vorticities.

Confined first to the investigation of waves of small amplitude, 
which can be satisfactorily approximated by sinusoidal curves within the linear theory, the examination of periodic traveling water waves arising as the free surface 
of an irrotational flow with a flat bed originates at the beginning of the $19^{th}$ century. The description of waves that are flatter near the trough and have 
steeper elevations near the crest necessitates a nonlinear approach, which was in fact conducted in the last decades and led to the 
first rigorous results concerning the existence of wave trains in irrotational flow, see for instance the case of   Stokes waves \cite{To96} and the flow beneath them (particle trajectories, behavior of the pressure) 
cf. \cite{CoCla13, Co06, CoARMA13, CoSt10}.

To go beyond irrotational flows and to treat wave current interactions one needs to incorporate vorticity into the problem, cf. \cite{Cobook, Jon, Thom}.
However, the difficulties generated by the presence of the vorticity have prevented a rigorous mathematical development,
which appeared only relatively recently in \cite{CoSt04}, where the existence of small and large amplitude steady periodic gravity water waves with a general (continuous) vorticity distribution was proved. 

Of high significance is the investigation of steady periodic rotational waves interacting with currents that possess a  rough  -- that is discontinuous or unbounded -- vorticity.
Discontinuous vorticities model sudden changes in the underlying current, numerical simulations of such flows being quite recent \cite{KO1,KO2}.
Unbounded vorticities on the other hand can describe turbulent flows in channels (see the empirical law in on page 106 of \cite{B62}) and are relevant also in the setting of 
wind generated waves that possess a thin layer of high vorticity adjacent to the wave surface \cite{O82, PB74}.
The discontinuous vorticity distribution was considered in the 
groundbreaking paper \cite{CoSt11} where  the existence of steady two-dimensional periodic gravity
water waves of small and large
amplitude on water  flows with an arbitrary bounded (but discontinuous) vorticity was proved.
Small amplitude capillary-gravity waves with discontinuous but bounded vorticity were constructed in \cite{CM14, M13xx}.
Waves with unbounded vorticity were first shown to exist in \cite{CM13xxx} but only when allowing for  surface tension as a restoring force.
This situation appears in 
many physical settings one of which being that of wind blowing over a still 
fluid surface and giving rise to two-dimensional small amplitude wave trains driven by capillarity \cite{Ki65} which grow larger and turn into capillary-gravity waves.

Another striking occurrence in water flows is the presence of stagnation points, that is points where the steady velocity field vanishes, thus making the analysis more intricate, since the 
usual Dubreil-Jacotin transform which converts the original free boundary problem into a problem in a fixed domain, is no longer available.
There is a short list of papers dealing
with existence of water flows allowing for stagnation points and for a  non-vanishing continuous vorticity, cf. \cite{CV11, EW14x, EMM11, KK14, W09} for gravity waves and \cite{CM12x, CM13, CM13_2, M13x} for waves with capillarity. 
Under consideration in this paper is a a more involved setting where, in addition to permitting stagnation points (whose existence in the fluid is proven), we also allow for a discontinuous  distribution of the vorticity. 
To our best knowledge the incorporation of both stagnation points and of a discontinuous vorticity is a feature that was not rigorously
analyzed before.

The governing equations are the Euler equations of motion, together with boundary conditions on the free surface and on the flat bed of the water flow. The discontinuous vorticity that
we consider here is of the following type: we assume that the flow has a layer of constant vorticity $\gamma_2$ adjacent to the free surface above another layer of constant vorticity
$\gamma_1$ neighboring the flat bed. Of course, the interesting situation (that we pursue here) is when $\gamma_1\neq \gamma_2$. 
The unknowns are here the free surface, the interface
separating the regions of different vorticities (which can be seen as an internal wave due to the discontinuity in vorticity), the velocity field and the pressure function. 
In a first step we reduce the number of unknowns by means of the stream function whose 
utilization converts the problem into a transmission problem along the line of discontinuity of vorticity with fewer unknowns. The second step that we undertake is to consider a
flattening transformation which has the advantage that changes the free boundary value problem into a problem in a fixed domain, thus making it more tractable for the analysis. For studying
the latter resulted problem we employ the Crandall-Rabinowitz Theorem on bifurcation from simple eigenvalues.

The dispersion relation that we obtain -- which is a formula giving the speed at the free surface of the bifurcation inducing laminar flows in terms of the two vorticities $\gamma_1, \gamma_2$, the thickness of 
the two rotational layers and the wavelength -- generalizes the one in \cite{CV11} obtained in the case of a water flow with constant vorticity and allowing for stagnation points.
The intricacy of the dispersion relation -- a third order algebraic equation -- allows us to prove existence of water waves of small wavelength arising as the free surface of 
water flows with rotational layers of different constant vorticities and containing stagnation points, cf. Theorems \ref{MT1}-\ref{MT3}, \ref{MT4}, \ref{MT5}. 
We present also the streamline pattern in the moving frame for the solutions, cf. Figures \ref{Fig1}-\ref{Fig3}.
Our results show especially that the ratio of the amplitudes of the   surface wave and that of the internal wave -- and the fact that the surface wave and the internal wave are in phase or anti phase -- is highly 
influenced by the vorticities  of the currents and by the speed at the free surface of the bifurcation inducing laminar flows.

We briefly outline the content of the paper. We present in Section \ref{Sec2} the governing equations
together with the analytic setting we work in. Moreover, we also find the dispersion relation whose
analysis is undertaken in Section \ref{Sec:3} for the
case $\gamma_2>0$, while the more singular case 
$\gamma_2=0$ is treated in Section \ref{Sec:4}. 
The Appendix  contains several technical lemmas.

\section{The governing equations}\label{Sec2}
Under consideration is a two-dimensional steady periodic flow, moving under the influence of  gravity, such that the  surface waves propagate in the positive $x$-direction.
The water flow occupies the domain $\Omega$ bounded below by the flat bed $y=-d,$ with $d>0$, and above by the free surface $y=h(x)$, which is a small perturbation of the flat free surface $y=0$.
In a reference frame moving with the wave speed $c>0$, the equations of motion in $\Omega$ are Euler's equations
 \begin{subequations}\label{VF}
 \begin{equation}\label{Euler}
\left\{
\begin{array}{rllll}
({u}-c) { u}_x+{ v}{ u}_y&=&-{ P}_x,\\
({ u}-c) { v}_x+{ v}{ v}_y&=&-{ P}_y-g,\\                                 
{ u}_x+{v}_y&=&0,
\end{array}\right.
\end{equation}
where $(u,v)$ denotes the velocity field, $P$ stands for pressure and $g$ is the    gravity constant.
The equations of motion are supplemented by the boundary conditions, which, ignoring surface tension effects, are
\begin{equation}\label{EQBC}
\left\{
\begin{array}{rllll}
 P&=&{P}_0&\text{on $ y=h(x)$},\\
 v&=&({ u}-c)h'&\text{on $ y=h(x)$},\\
 v&=&0 &\text{on $ y=-d$},
\end{array}
\right.
\end{equation}
with $P_0$ being the constant atmospheric pressure.

We are interested in solutions of the problem \eqref{VF} for which the vorticity  $\omega:=u_y-v_x$ of the flow
presents discontinuities of the following type: we assume that, adjacent to the free surface, the water flow possesses a layer $$\Omega(f,h):=\{(x,y):x\in\mathbb{R}, -d_2+f(x)<y<h(x)\},$$
of constant   vorticity $\gamma_{2}$, situated
above another layer $$\Omega(f):=\{(x,y):x\in
\mathbb{R}, -d<y<-d_2+f(x)\},$$ 
which is adjacent to the flat bed and is of constant  vorticity $\gamma_{1}$, that is
\begin{equation}\label{vorticity}
\omega:=
\left\{
\begin{array}{lll}
 \gamma_1,& \text{in $\Omega(f)$},\\
 \gamma_2,&\text{in $\Omega(f,h).$}
\end{array}
\right.
\end{equation} 
\end{subequations}

We steadily assume  that $\gamma_1\neq \gamma_2$ and that $d_2>0$,  $d-d_2=:d_1>0.$ 
We note that, additionally to  $(u,v,P,h),$ we have a further unknown: the function $f$ whose graph separates the two currents of different vorticities.
By Helmholtz's law, the vorticity is constant along streamlines of the steady flow, and as a consequence of this $y=-d_2+f$ has to be a streamline of the flow.
This streamline can be viewed as an internal wave  due to the jump in vorticity. 

With the help of the stream function $\psi$, introduced (up to an additive constant) via the relation $\nabla\psi=(-v,u-c)$ we can reformulate \eqref{Euler}-\eqref{vorticity} as the 
free-boundary problem
\begin{subequations}\label{PB2}
\begin{equation}\label{tra_eq}
 \left\{\begin{array}{lll}
         \Delta\psi_2 =\gamma_2 & \text{in $\Omega(f,h),$}\\
         \Delta\psi_1 =\gamma_1 & \text{in $\Omega(f),$}\\
         \psi_2 = 0 &\text{on $y=h(x)$,}\\
         \psi_2 =\psi_1 &\text{on $ y=-d_2+f(x),$}\\
         \psi_1= m &\text{on $ y=-d,$}
        \end{array}\right.
\end{equation}
subjected to the conditions
\begin{equation}\label{trans}
\left\{\begin{array}{lll}
        \partial_y \psi_2 =\partial_y\psi_1 & \text{on $ y=-d_2+f(x),$}\\[1ex]
         \vert\nabla \psi_2\vert^2+2g(d+h)= Q & \text{on $ y=h(x),$}
\end{array}\right.
\end{equation} 
\end{subequations} 
where  the constant $-m$ represents the relative mass flux and $Q\in\mathbb{R}$ is related with the hydraulic head.
Moreover, $\psi_1:=\psi\big|_{\Omega(f)}$ and $\psi_2:=\psi\big|_{\Omega(f,h)},$ so that from the fourth equation of \eqref{tra_eq} and the first equation of \eqref{trans} we
see that the $\nabla \psi$ (hence also the velocity field) is continuous across the interface $y=-d_2+f(x)$.

Given $\alpha\in(0,1),$ it is not difficult to see that any solution 
\[
((f,h),\psi_1,\psi_2)\in \big(C^{3+\alpha}_{per}(\mathbb{R})\big)^2\times C^{3+\alpha}_{per}\big(\, \overline{\Omega(f)}\, \big)\times C^{3+\alpha}_{per}\big(\, \overline{\Omega(f,h)}\, \big) 
\]
of \eqref{PB2} defines a solution 
\begin{align*}
&(u,v,P,(f,h))\in \big(C_{per}^{1-}(\overline\Omega)\big)^3\times \big(C^{3+\alpha}_{per}(\mathbb{R})\big)^2\\[-1ex]
&\big((u,v)\big|_{\Omega(f)},(u,v)\big|_{\Omega(f,h)}\big) \in \big(C^{2+\alpha}_{per}\big(\, \overline{\Omega(f)}\, \big)\big)^2\times \big(C^{2+\alpha}_{per}\big(\, \overline{\Omega(f,h)}\, \big)\big)^2\\[-1ex]
&\big(P\big|_{\Omega(f)},P\big|_{\Omega(f,h)}\big)\in C^{2+\alpha}_{per}\big(\, \overline{\Omega(f)}\, \big)\times C^{2+\alpha}_{per}\big(\, \overline{\Omega(f,h)}\, \big)
 \end{align*}
of \eqref{VF}.  
The subscript  {\em per}  stands for functions that are periodic in the horizontal variable, meaning that all the functions considered above are $L-$periodic with respect to $x,$ with $L>0$ being fixed.

We first determine laminar flow solutions of problem \eqref{PB2}, that is water flows with a flat free surface and parallel streamlines, meaning that they present no $x$-dependence.
Of interest are laminar flows that contain stagnation points,  more precisely laminar flows that contain streamlines consisting entirely of stagnation points.
Then we study when non laminar solutions bifurcate from the laminar flows and  describe the qualitative picture of
the streamline pattern for the bifurcating solutions.

\paragraph{\bf Laminar flow solutions}
Because the stream function is constant along the streamline $y=-d_2+f(x)$, we use the value of the stream function
\begin{equation}\label{lambda}\psi_1=\psi_2=\lambda\qquad \text{on $ y=-d_2+f(x),$}\end{equation}
to parametrize a family of laminar solutions of \eqref{tra_eq}.
Setting $f\equiv h\equiv 0$ we obtain from \eqref{tra_eq} that the stream function $\psi^0:=(\psi_1^0,\psi_2^0)$  satisfies
\begin{align*}
& \psi^{0}_{1}(y)=\frac{\gamma_1 y^2}{2}+\Big(\frac{\gamma_1(d+d_2)}{2}+\frac{\lambda-m}{d_1}\Big)y+\frac{\lambda d}{d_1}+\frac{  \gamma_1dd_2}{2}-\frac{md_2}{d_1},\quad y\in [-d,-d_2]\\[-1ex]
& \psi^{0}_{2}(y)=\frac{\gamma_2 y^2}{2}+\Big(\frac{\gamma_2 d_2}{2}-\frac{\lambda}{d_2}\Big)y,\quad y\in [-d_2,0].
\end{align*}
The  equations of \eqref{trans} are equivalent to 
\begin{equation}\label{FK}
 m=\frac{\lambda d}{d_2}+d_1\frac{\gamma_1d_1+\gamma_2 d_2}{2},\qquad
 Q=\Big(\frac{\gamma_2 d_2}{2}-\frac{\lambda}{d_2}\Big)^2+2gd.
\end{equation}
In the following we choose the constants $m$ and $Q$ in \eqref{tra_eq} and \eqref{trans} to be given by \eqref{FK}, the constant $\lambda$ introduced via \eqref{lambda}  being left as a  parameter.
Hence,   each $\lambda\in\mathbb{R}$ determines a unique laminar solution $((f,h),\psi_1,\psi_2):=(0,\psi_1^0,\psi_2^0)$  of \eqref{PB2} when $m$ and $Q$ are defined by \eqref{FK}.
These are the laminar solutions from which we study bifurcation. 

\paragraph{\bf Conditions for stagnation}
We note that the laminar flows determined above  possess stagnation points -- that is water particles that travel horizontally with the wave speed -- if and only if 
\begin{equation}\label{prelstag}
 \partial_y \psi^0_2 (-d_2)\cdot\partial_y\psi^0_2(0)\leq0\qquad\text{or}\qquad \partial_y \psi^0_1 (-d)\cdot\partial_y\psi^0_1(-d_2)\leq 0.
\end{equation}
If {\eqref{prelstag} holds true, then there exists $y_0\in [-d,0]$ such that 
$$\partial_y \psi^0_i(y_0)=0\quad{\rm for}\quad i=1\,\,{\rm or}\,\,2.$$ 
The streamline $y=y_0$} consists only of stagnation points, and we expect that  the solutions of \eqref{VF}
that bifurcate from these laminar solutions possess stagnation points too, cf. \cite{EEW11, W09}.
The first inequality ensures  stagnation in the layer adjacent to the wave surface, and is equivalent to
\begin{equation}\label{top}
  \Lambda (\Lambda- \gamma_2d_2)\leq0,
\end{equation}
respectively the condition for stagnation in the bottom layer is 
\begin{equation}\label{bed}
 (\Lambda-\gamma_2d_2)( \Lambda -\gamma_1d_1-\gamma_2d_2)\leq0.
\end{equation}
Hereby, we set
\begin{equation}\label{Lam}
\Lambda:=\frac{\gamma_2  d_2}{2}-\frac{\lambda}{d_2}. 
\end{equation}
The constant $\Lambda$ has a physical interpretation: it is the relative horizontal speed at the free surface for the laminar flow determined by $\lambda,$ that is $\Lambda=\partial_y\psi_2^0\big|_{y=0}.$
For this reason   we define $\lambda$ via \eqref{Lam} and use $\Lambda$ as   parameter.

\paragraph{\bf The analytic setting} With $\Lambda$ as parameter, we are left to seek special values of $\Lambda$ such that branches of non laminar solutions of \eqref{PB2} bifurcate  from the curve of laminar flows.
For this, we need to recast \eqref{PB2} in a suitable analytic setting.

In the following $\alpha\in(0,1)$ is a fixed H\"older exponent.
Because the equations of \eqref{tra_eq} and \eqref{trans} are posed on manifolds that depend on the unknown functions $(f,h)$, it is suitable to transform the problem \eqref{PB2} on fixed manifolds. 
For this, we set $\Omega_1:=\Omega(0)$, $\Omega_2:=\Omega(0,0)$ and define the mappings
\begin{align*}
&\Phi_f:\Omega_1\rightarrow\Omega(f),\quad \Phi_f (x,y)=\Big(x,\frac{d_1+f(x)}{d_1}y+\frac{d}{d_1}f(x)\Big),\\
 &\Phi_{(f,h)}:\Omega_2\rightarrow\Omega(f,h),\quad \Phi_{(f,h)}(x,y)=\Big( x,\frac{h(x)-f(x)+d_2}{d_2}y+h(x)\Big).
\end{align*}
 It is easy to  see that $\Phi_f$ and $\Phi_{(f,h)}$ are $C^{3+\alpha}-$diffeormorphisms for each $(f,h)\in\mathcal{O}$, whereby
 \[ \mathcal{O}:=\{(f,h)\in\big(C^{3+\alpha}_{e,per}(\mathbb{R})\big)^2\,:\, -d<-d_2+f<h\},\]
 the subscript $e$ referring to the fact that we consider only even function in $x$.
Using these diffeomorphisms, we define the linear elliptic operators 
\begin{align*}
& \mathcal{A}(f):C^{3+\alpha}_{e,per}(\overline{\Omega}_1)\rightarrow C^{1+\alpha}_{e,per}(\overline{\Omega}_1),\quad \mathcal{A}(f)w_1:=\Delta (w_1\circ\Phi_f^{-1})\circ\Phi_f,\\[1ex]
 &\mathcal{A}(f,h):C^{3+\alpha}_{e,per}(\overline{\Omega}_2)\rightarrow C^{1+\alpha}_{e,per}(\overline{\Omega}_2),\quad \mathcal{A}(f,h)w_2:=\Delta (w_2\circ\Phi_{(f,h)}^{-1})\circ\Phi_{(f,h)},
\end{align*}
and the boundary operators
\begin{align*}
 &\mathcal{B}_1:\mathbb{R}\times \mathcal{O}\times C^{3+\alpha}_{e,per}(\overline\Omega_2)\to C^{2+\alpha}_{e,per}(\mathbb{R}),\\
 &\mathcal{B}_2:\mathbb{R}\times \mathcal{O}\times C^{3+\alpha}_{e,per}(\overline\Omega_1)\times C^{3+\alpha}_{e,per}(\overline\Omega_2)\to C^{2+\alpha}_{e,per}(\mathbb{R}),
\end{align*}
respectively through
\begin{align*}
&\mathcal{B}_1(\Lambda,(f,h),w_2):=\Big(|\nabla (w_2\circ\Phi_{(f,h)}^{-1})|^2\circ\Phi_{(f,h)}+2g(d+h)-Q\Big)\Big|_{y=0},\\[1ex]
&\mathcal{B}_2(\Lambda,(f,h))[w_1,w_2]:=\Big[\big(\partial_y(w_2\circ\Phi_{(f,h)}^{-1})\big)\circ\Phi_{(f,h)} -\big(\partial_y(w_1\circ\Phi_f^{-1})\big)\circ\Phi_f\Big]\Big|_{y=-d_2}.
\end{align*}

\begin{obs}\label{Obs:1} Let $\Lambda\in\mathbb{R}$, $((f,h),\psi_1,\psi_2)\in\mathcal{O}\times C^{3+\alpha}_{per}\big(\, \overline{\Omega(f)}\, \big)\times C^{3+\alpha}_{per}\big(\, \overline{\Omega(f,h)}\, \big)$, and assume that $\lambda, m, Q$ are defined by \eqref{lambda}, 
\eqref{FK}, and \eqref{Lam}.
Then,  the tuple $((f,h),\psi_1,\psi_2)$  solves the problem  \eqref{PB2}   if and only if   
\begin{itemize}
    \item[$(i)$] $w_1:=\psi_1\circ\Phi_f\in C^{3+\alpha}_{e,per}(\overline\Omega_1)$ is the unique solution of the Dirichlet problem
    \begin{equation}\label{DP1}
 \left\{\begin{array}{llll}
         \mathcal{A}(f)w_1  = \gamma_1 & \text{in $\Omega_1$},\\
         w_1 =\lambda&\text{on $ y=-d_2,$}\\
         w_1=m&\text{on $y=-d.$}
        \end{array}\right.
\end{equation}
\item[$(ii)$] $w_2:=\psi_2\circ \Phi_{(f,h)}\in C^{3+\alpha}_{e,per}(\overline\Omega_2)$ is the unique solution of  the Dirichlet problem
\begin{equation}\label{DP2}
 \left\{\begin{array}{llll}
         \mathcal{A}(f,h)w_2  = \gamma_2&\text{in $\Omega_2$},\\
         w_2 =0&\text{on $ y=0,$}\\
         w_2=\lambda&\text{on $y=-d_2.$}
        \end{array}\right.
\end{equation}
\item[$(iii)$] $\mathcal{B}_1(\Lambda,(f,h),w_2)=\mathcal{B}_2(\Lambda,(f,h))[w_1,w_2]=0$ in $C^{2+\alpha}_{e,per}(\mathbb{R}).$
   \end{itemize}
 \end{obs}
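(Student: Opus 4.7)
The assertion is essentially a bookkeeping fact: the transmission problem \eqref{PB2} posed on the moving domains $\Omega(f)$ and $\Omega(f,h)$ corresponds, via pullback through the diffeomorphisms $\Phi_f$ and $\Phi_{(f,h)}$, to the fixed-domain system (i)--(iii). My plan is therefore to verify the correspondence line by line. First I would record the geometric fact, obtained by direct inspection of the defining formulas, that $\Phi_f$ maps $\{y=-d\}$ and $\{y=-d_2\}$ of $\Omega_1$ onto the flat bed $\{y=-d\}$ and the interface $\{y=-d_2+f(x)\}$ of $\Omega(f)$, while $\Phi_{(f,h)}$ maps $\{y=-d_2\}$ and $\{y=0\}$ of $\Omega_2$ onto the interface $\{y=-d_2+f(x)\}$ and the free surface $\{y=h(x)\}$, respectively.

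For the forward implication, starting from a solution of \eqref{PB2}, I would define $w_i$ as in (i) and (ii). The Poisson equations $\Delta\psi_i=\gamma_i$ transform into $\mathcal A(f)w_1=\gamma_1$ and $\mathcal A(f,h)w_2=\gamma_2$ by the very definition of $\mathcal A(f)$ and $\mathcal A(f,h)$. The Dirichlet conditions in \eqref{tra_eq} translate directly through the boundary mappings above: $\psi_2=0$ on $y=h(x)$ becomes $w_2=0$ on $\{y=0\}$, $\psi_1=m$ on $y=-d$ becomes $w_1=m$ on $\{y=-d\}$, and $\psi_1=\psi_2$ on $y=-d_2+f(x)$ together with \eqref{lambda} becomes $w_1=w_2=\lambda$ on $\{y=-d_2\}$; this gives (i) and (ii), apart from uniqueness. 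The two conditions in \eqref{trans} are then, essentially by construction, equivalent to $\mathcal B_2(\Lambda,(f,h))[w_1,w_2]=0$ and $\mathcal B_1(\Lambda,(f,h),w_2)=0$.

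The uniqueness claims in (i) and (ii) follow from standard elliptic theory applied to the operators $\mathcal A(f)$ and $\mathcal A(f,h)$: these are second-order uniformly elliptic operators on the fixed strips $\Omega_1,\Omega_2$ (being conjugates of $\Delta$ by $C^{3+\alpha}$-diffeomorphisms), so the corresponding Dirichlet problems with $C^{3+\alpha}$-data admit a unique classical solution, which is automatically even and $L$-periodic because the data are. The converse direction is then obtained by reversing every step: given $(w_1,w_2)$ satisfying (i)--(iii), define $\psi_i:=w_i\circ\Phi^{-1}$ and read the implications above backwards to recover a solution of \eqref{PB2}. No step is genuinely difficult; the only small points that deserve care are that the transmission condition $\partial_y\psi_1=\partial_y\psi_2$ on $y=-d_2+f(x)$, combined with the continuity $\psi_1=\psi_2=\lambda$ along the same curve, yields continuity of the entire gradient $\nabla\psi$ (hence of the velocity field) across the interface, and that the Bernoulli equation written on the unknown surface $y=h(x)$ is equivalent, through the boundary behaviour of $\Phi_{(f,h)}$, to the vanishing of $\mathcal B_1$ on the fixed line $\{y=0\}$.
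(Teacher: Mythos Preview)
Your argument is correct and, in fact, more detailed than what the paper offers: the paper states Observation~\ref{Obs:1} without proof, treating it as a direct consequence of the definitions of the flattening diffeomorphisms and of the operators $\mathcal{A}(f)$, $\mathcal{A}(f,h)$, $\mathcal{B}_1$, $\mathcal{B}_2$. Your line-by-line verification---checking that the boundary components of $\Omega_1$ and $\Omega_2$ are mapped to the physical boundaries, that the Poisson equations pull back by definition, that the Dirichlet data transfer correctly, and that uniqueness and evenness follow from standard elliptic theory applied to the conjugated Laplacian---is exactly the routine computation the authors had in mind and chose to omit.
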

  Thanks to Observation \ref{Obs:1} we can recast the problem  \eqref{PB2} as a nonlinear and nonlocal equation with $(\Lambda,(f,h))$ as unknown.
  In order to proceed, we establish first the following result. 
  \begin{lem}\label{L:1} Given $(\Lambda,(f,h))\in\mathbb{R}\times\mathcal{O}$, we let $w_1:=w_1(\Lambda,(f,h))$ and $w_2:=w_2(\Lambda,(f,h))$ denote the unique solution of \eqref{DP1} and \eqref{DP2}, respectively, with $\lambda$ given by \eqref{Lam}. 
   Then, we have $w_i\in C^\omega\big(\mathbb{R}\times\mathcal{O}, C^{3+\alpha}_{e,per}(\overline\Omega_i)\big), i=1,2.$
  \end{lem}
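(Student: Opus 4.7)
The plan is to reformulate each Dirichlet problem as an affine operator equation whose linear part depends real-analytically on $(f,h)\in\mathcal{O}$ and whose right-hand side depends analytically on $\Lambda$, and then to conclude via the analyticity of Banach-space inversion applied to a real-analytic operator-valued map.

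First, writing $\Phi_f^{-1}$ and $\Phi_{(f,h)}^{-1}$ explicitly and applying the chain rule shows that $\mathcal{A}(f)$ is a uniformly elliptic second-order linear operator on $\Omega_1$ whose coefficients are polynomials in $f, f', f''$ and in $(d_1+f)^{-1}$; similarly, $\mathcal{A}(f,h)$ has coefficients that are polynomials in $f,h,f',h',f'',h''$ and in $(d_2+h-f)^{-1}$. The constraint $-d<-d_2+f<h$ built into $\mathcal{O}$ forces $d_1+f>0$ and $d_2+h-f>0$ pointwise, so both denominators remain bounded away from zero. Since $C^{1+\alpha}_{e,per}$ is a Banach algebra and superposition with a real-analytic scalar function is a real-analytic operation between suitable open subsets of Hölder spaces, I deduce that
\[
\mathcal{O}\ni(f,h)\mapsto \mathcal{A}(f)\in\mathcal{L}\bigl(C^{3+\alpha}_{e,per}(\overline{\Omega}_1),\,C^{1+\alpha}_{e,per}(\overline{\Omega}_1)\bigr),
\]
together with its counterpart for $\mathcal{A}(f,h)$, is real-analytic on $\mathcal{O}$.

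Next, I assemble the interior operators with the (fixed, $f$-independent) boundary traces into combined maps
\[
\mathcal{T}_1(f)w_1:=\bigl(\mathcal{A}(f)w_1,\,w_1|_{y=-d_2},\,w_1|_{y=-d}\bigr),\qquad \mathcal{T}_2(f,h)w_2:=\bigl(\mathcal{A}(f,h)w_2,\,w_2|_{y=0},\,w_2|_{y=-d_2}\bigr),
\]
with codomains $C^{1+\alpha}_{e,per}(\overline{\Omega}_i)\times\bigl(C^{3+\alpha}_{e,per}(\mathbb{R})\bigr)^2$. Because both reference strips are smooth and the operators are uniformly elliptic with Hölder coefficients, classical Schauder theory for the Dirichlet problem — carried out on a period cell and combined with the evenness/periodicity constraints encoded by the subscript $e,per$ via symmetric reflection — shows that $\mathcal{T}_1(f)$ and $\mathcal{T}_2(f,h)$ are topological isomorphisms for every $(f,h)\in\mathcal{O}$.

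Since the set of topological isomorphisms is open in $\mathcal{L}(X,Y)$ and the inversion map $S\mapsto S^{-1}$ is real-analytic there, the analyticity of $(f,h)\mapsto \mathcal{T}_i(\,\cdot\,)$ transfers to the inverses $(f,h)\mapsto \mathcal{T}_i(\,\cdot\,)^{-1}$. The data appearing on the right-hand sides of \eqref{DP1}–\eqref{DP2} are the constant functions $\gamma_1,\gamma_2,0$ and the numbers $\lambda, m$, which by \eqref{Lam} and \eqref{FK} are affine (hence analytic) functions of $\Lambda$. Therefore
\[
w_1(\Lambda,(f,h))=\mathcal{T}_1(f)^{-1}(\gamma_1,\lambda,m),\qquad w_2(\Lambda,(f,h))=\mathcal{T}_2(f,h)^{-1}(\gamma_2,0,\lambda)
\]
are compositions of real-analytic maps, which yields the claim. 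The main obstacle is the analyticity assertion of the first step: one must check carefully that the rational expressions furnished by the chain rule really do produce real-analytic operator-valued maps on $\mathcal{O}$. This reduces to the standard superposition-operator calculus on Hölder spaces applied to $s\mapsto(d_1+s)^{-1}$ and $(s,t)\mapsto(d_2+t-s)^{-1}$, which is available precisely because of the uniform positivity of the denominators enforced by the definition of $\mathcal{O}$.
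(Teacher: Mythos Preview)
Your proof is correct and essentially mirrors the paper's own argument: the paper defines the same combined operator $\mathcal{F}(\Lambda,(f,h),w_1)=(\mathcal{A}(f)w_1-\gamma_1,\,w_1|_{y=-d_2},\,w_1|_{y=-d})$, notes it is real-analytic with $\partial_{w_1}\mathcal{F}$ an isomorphism by elliptic theory, and invokes the implicit function theorem. Your version simply unpacks the IFT step into the analyticity of operator inversion applied to $\mathcal{T}_i(\cdot)$, which is the same content in slightly different packaging.
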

\begin{proof}
 We prove just the real-analyticity of the solution operator  $w_1,$ the claim for $w_2$ following similarly. 
 By elliptic theory, cf. e.g. \cite{GT01}, we see that $w_1:\mathbb{R}\times\mathcal{O}\to C^{3+\alpha}_{e,per}(\overline\Omega_1)$ is well-defined.
 Moreover, we have that 
 \[\mathcal{F}(\Lambda, (f,h), w_1(\Lambda,(f,h)))=0\qquad\text{for all $(\Lambda,(f,h))\in\mathbb{R}\times\mathcal{O}$,}\]
 whereby $\mathcal{F}\in C^\omega\big(\mathbb{R}\times\mathcal{O}\times C^{3+\alpha}_{e,per}(\overline\Omega_1), C^{1+\alpha}_{e,per}(\overline\Omega_1)\times ( C^{3+\alpha}_{e,per}(\mathbb{R}))^2\big)$ is the operator defined by
 \[
 \mathcal{F}(\Lambda, (f,h), w_1):=(\mathcal{A}(f)w_1  - \gamma_1, w_1\big|_{y=-d_2}, w_1\big|_{y=-d}).
 \]
 Taking into account that Fr\'echet derivative
 \[
 \partial_{w_1}\mathcal{F}(\Lambda, (f,h), w_1((\Lambda,(f,h))))[z]:=(\mathcal{A}(f)z, z\big|_{y=-d_2}, z\big|_{y=-d})
 \]
 is an isomorphism, the assertion follows from the implicit function theorem. 
\end{proof}

Because $\mathcal{B}_i, i=1,2,$ depend real-analytically on their arguments too, we obtain from Lemma \ref{L:1} and Observation \ref{Obs:1} that the problem \eqref{PB2} is equivalent to the nonlinear and nonlocal equation
\begin{equation}\label{NP}
 \Phi (\Lambda, (f,h))=0,
\end{equation}
\smallskip
whereby $\Phi:=(\Phi_1,\Phi_2)\in C^\omega\big(\mathbb{R}\times\mathcal{O}, \big(C^{2+\alpha}_{e,per}(\mathbb{R})\big)^2\big)$ is the operator defined by
\begin{equation}\label{Phi}
\Phi(\Lambda,(f,h)):=(\mathcal{B}_1(\Lambda,(f,h),w_2(\Lambda,(f,h))), \mathcal{B}_2(\Lambda,(f,h))[w_1(\Lambda,(f,h)),w_2(\Lambda,(f,h))]).
\end{equation}
The laminar flow solutions of \eqref{PB2} correspond to the trivial solutions $(\Lambda,0)\in\mathbb{R}\times \mathcal{O}$ of 
\eqref{NP}.
In order to find other solutions, we use the theorem on bifurcations from simple eigenvalues due to Crandall and Rabinowitz  \cite{CR71}.

\begin{thm}[Crandall and Rabinowitz]\label{CR}
Let $\mathbb{X},\mathbb{Y}$ be real Banach spaces   and  let the mapping $\Phi\in C^\omega(\mathbb{R} \times \mathbb{X},\mathbb{Y})$ satisfy:
\begin{enumerate}
\item[(a)] \label{cr1} $\Phi(\Lambda,0)=0$ for all $\Lambda\in \mathbb{R}$;
\item[(b)] \label{cr2} There exists $\Lambda_*\in \mathbb{R}$ such that Fr\'echet derivative $\partial_x \Phi(\Lambda_*,0)$ is 
a Fredholm operator of index zero with a one-dimensional kernel and 
\[{\rm Ker\,} \partial_x \Phi(\Lambda_*,0)={\rm span\, }\{x_0\} \qquad\text{with $ 0\neq x_0\in \mathbb{X}$};\]
\item[(c)] \label{cr3}
 The transversality condition 
\[
\partial_{\Lambda x}\Phi(\Lambda_*,0)[x_0]\not \in {\rm Im\,}\partial_x \Phi(\Lambda_*,0).
\]
\end{enumerate}
Then, $(\Lambda_*,0)$ is a bifurcation point in the sense that there exists $\varepsilon>0$ and a real-analytic
curve $(\Lambda,x):(-\varepsilon,\varepsilon)\to \mathbb{R}\times \mathbb{X}$  consisting only of solutions of the equation $\Phi(\Lambda,x)=0$. 
Moreover, as $s\to 0,$ we have that
\[
\Lambda(s)=\Lambda_*+O(s) \qquad\text{and}\qquad x(s)=sx_0 +O(s^2).
\]
Furthermore, there exists an open set $U\subset \mathbb{R}\times \mathbb{X}$ with $(\Lambda_*,0)\in U$ and 
\[
\{(\Lambda,x)\in U \,:\, \Phi(\Lambda,x)=0, x\neq 0\}=\{(\Lambda(s),x(s))\, : \, 0<|s|<\varepsilon\}.
\]
\end{thm}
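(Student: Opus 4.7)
The plan is to reduce the abstract equation $\Phi(\Lambda,x)=0$ to a scalar problem via a Lyapunov--Schmidt decomposition and then solve that scalar problem by a second application of the analytic implicit function theorem. Write $L:=\partial_x\Phi(\Lambda_*,0)$. Since $L$ is Fredholm of index zero with one-dimensional kernel $\mathbb{R}x_0$, its range $\mathbb{Y}_1$ is closed with codimension one. Choose closed complements so that $\mathbb{X}=\mathbb{R}x_0\oplus \mathbb{X}_1$ and $\mathbb{Y}=\mathbb{Y}_1\oplus \mathbb{Y}_2$ with $\dim\mathbb{Y}_2=1$; let $P$ and $Q=I-P$ denote the associated projections, and decompose every $x\in \mathbb{X}$ near $0$ uniquely as $x=sx_0+y$ with $s\in\mathbb{R}$ and $y\in\mathbb{X}_1$.

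First I would solve the auxiliary equation $F(\Lambda,s,y):=P\Phi(\Lambda,sx_0+y)=0$ for $y$. At $(\Lambda_*,0,0)$ one has $F=0$, and the partial derivative $\partial_y F(\Lambda_*,0,0)=PL|_{\mathbb{X}_1}\colon \mathbb{X}_1\to \mathbb{Y}_1$ is an isomorphism: $L$ restricted to $\mathbb{X}_1$ is injective (its kernel meets $\mathbb{X}_1$ only at zero), surjects onto $\mathbb{Y}_1=\mathrm{Im}\,L$, and $P$ acts as the identity on $\mathbb{Y}_1$. The analytic implicit function theorem then produces a unique real-analytic $y=y(\Lambda,s)$ near $(\Lambda_*,0)$ with $y(\Lambda_*,0)=0$ and $F(\Lambda,s,y(\Lambda,s))\equiv 0$. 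Hypothesis (a) and this uniqueness force $y(\Lambda,0)\equiv 0$, and differentiating in $s$ at $(\Lambda_*,0)$ together with $Lx_0=0$ yields $\partial_s y(\Lambda_*,0)=0$ as well.

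It remains to solve the scalar bifurcation equation $g(\Lambda,s):=Q\Phi(\Lambda,sx_0+y(\Lambda,s))=0$, which takes values in the one-dimensional space $\mathbb{Y}_2$. Condition (a) gives $g(\Lambda,0)\equiv 0$, so we have the real-analytic factorisation $g(\Lambda,s)=s\,h(\Lambda,s)$ with $h(\Lambda,s):=\int_0^1\partial_s g(\Lambda,ts)\,dt$, and every nontrivial zero of $g$ is a zero of $h$. A direct computation at $(\Lambda_*,0)$ gives $h(\Lambda_*,0)=\partial_s g(\Lambda_*,0)=QLx_0=0$, while $\partial_s y(\Lambda_*,0)=0$, the relation $y(\Lambda,0)\equiv 0$, and $Q\mathbb{Y}_1=0$ combine to produce
\begin{equation*}
\partial_\Lambda h(\Lambda_*,0)=Q\,\partial_{\Lambda x}\Phi(\Lambda_*,0)[x_0]\neq 0
\end{equation*}
by the transversality hypothesis (c). A second application of the analytic implicit function theorem then yields a unique real-analytic $\Lambda=\Lambda(s)$ near $s=0$ with $\Lambda(0)=\Lambda_*$ and $h(\Lambda(s),s)=0$. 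Setting $x(s):=sx_0+y(\Lambda(s),s)$ produces the asserted bifurcation curve; the asymptotics $\Lambda(s)=\Lambda_*+O(s)$ and $x(s)=sx_0+O(s^2)$ follow immediately from $y(\Lambda_*,0)=\partial_s y(\Lambda_*,0)=0$.

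The local uniqueness assertion is then obtained by chaining the two uniqueness clauses: on a sufficiently small neighbourhood $U$ of $(\Lambda_*,0)$, any solution's $\mathbb{X}_1$-component must agree with $y(\Lambda,s)$, so any nontrivial solution satisfies $h(\Lambda,s)=0$ and therefore lies on $\Lambda=\Lambda(s)$. The step I expect to be the main subtlety is the factorisation $g(\Lambda,s)=s\,h(\Lambda,s)$ with $h$ genuinely real-analytic (not merely $C^\infty$): in the smooth category this is Hadamard's lemma, but in the analytic setting one must additionally observe that the convergent power series expansion of $g$ in $s$ about $s=0$ has no constant term, so termwise division by $s$ preserves convergence on a joint neighbourhood in $(\Lambda,s)$ and yields a real-analytic $h$.
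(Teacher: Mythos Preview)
Your proof is correct and is precisely the classical Lyapunov--Schmidt argument that Crandall and Rabinowitz used in their original 1971 paper. Note, however, that the present paper does not actually prove Theorem~\ref{CR}: it is quoted as a known tool, with a citation to \cite{CR71}, and then applied to the concrete operator $\Phi$ defined in~\eqref{Phi}. So there is no proof in the paper to compare against; you have simply supplied the standard argument behind a result the authors take as given. Your handling of the one point that can trip people up---the real-analyticity of the quotient $h(\Lambda,s)=g(\Lambda,s)/s$---is fine: since $g$ is real-analytic and vanishes identically at $s=0$, its joint power series about $(\Lambda_*,0)$ has no terms of order zero in $s$, and termwise division by $s$ yields a series with the same (hence positive) radius of convergence.
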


In order to apply this abstract bifurcation result, we need to compute the Fr\'echet derivative of the operator $\Phi$.
To this end we state the following lemma.
\begin{lem}\label{L:FD} 
 Let $\Lambda\in\mathbb{R}$ be given.
 The Fr\'echet derivative $\partial_{(f,h)}\Phi(\Lambda,0)$ is the matrix operator
 \[
 \partial_{(f,h)}\Phi(\Lambda,0)=\begin{pmatrix}
                            A_{11}&A_{12}\\
                           A_{21}&A_{22} 
                           \end{pmatrix}\in\mathcal{L}\big(\big(C^{3+\alpha}_{e,per}(\mathbb{R})\big)^2, \big(C^{2+\alpha}_{e,per}(\mathbb{R})\big)^2\big).
 \]
 Given $1\leq i,j\leq 2,$ the operator $A_{ij}\in \mathcal{L}\big(C^{3+\alpha}_{e,per}(\mathbb{R}), C^{2+\alpha}_{e,per}(\mathbb{R})\big) $ is the   Fourier multiplier with symbol $m^{ij}(\Lambda):=(m^{ij}_k(\Lambda))_{k\in\mathbb{N}}$ defined  by
 \begin{align}
  &m^{11}_k(\Lambda)=-2 \Lambda(\gamma_2 d_2-\Lambda)\frac{R_k}{\sinh\left(R_kd_2\right)},\label{a11}\\
  &m^{12}_k(\Lambda)=2 \Big[g+\gamma_2\Lambda-\Lambda^2 \frac{R_k}{\tanh\left(R_kd_2\right)}\Big],\label{a12}\\
  &m^{21}_k(\Lambda)=\gamma_2-\gamma_1+(\Lambda-\gamma_2d_2) \Big[ \frac{R_k}{\tanh\left(R_kd_1\right)}+ \frac{R_k}{\tanh\left(R_kd_2\right)}\Big],\label{a21}\\
  &m^{22}_k(\Lambda)=-\Lambda\frac{R_k}{\sinh\left(R_kd_2\right)}\label{a22}
 \end{align}
for $k\in\mathbb{N}$, whereby $R_k:=2k\pi/L.$ 
For $k=0$ the right-hand side of \eqref{a11}-\eqref{a22} should be understood as the limit of the expressions  when letting $R_k\to 0.$
\end{lem}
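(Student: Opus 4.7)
The plan is to linearize each component of $\Phi$ around the laminar solution $(\Lambda,0)$ and to diagonalize the resulting constant-coefficient linear operators on the fixed strips $\Omega_1$, $\Omega_2$ by Fourier cosine expansion. Since the spaces $C^{3+\alpha}_{e,per}(\mathbb{R})$ consist of even $L$-periodic functions, it suffices to test on the modes $f = \cos(R_k x)$ and $h=\cos(R_k x)$ for each $k\in\mathbb{N}$, and then read off the coefficients $m^{ij}_k(\Lambda)$.

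First I would compute the Fr\'echet derivatives of $w_1$ and $w_2$ at $(\Lambda,0)$. Since $w_1^0$ and $w_2^0$ are $x$-independent, differentiating the identities $\mathcal{A}(f)w_1=\gamma_1$ and $\mathcal{A}(f,h)w_2=\gamma_2$ together with the Dirichlet conditions (which remain $\lambda, m, 0$ at the flattened boundaries) yields that $\dot w_i:=\partial_{(f,h)} w_i(\Lambda,0)[(f,h)]$ solves a linear Poisson problem on $\Omega_i$ with zero Dirichlet data and a source obtained from $(\partial_{(f,h)}\mathcal{A}(0,0)[(f,h)])\psi_i^0$. Because $\psi_i^0$ depends only on $y$, the source in each case is a $y$-dependent multiple of $f''$ or $h''$, plus boundary-like contributions that, after Fourier expansion in $x$, become constant multiples of $R_k^2 \cos(R_k x)$ times explicit polynomials in $y$. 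For each $k$ I would then solve the resulting second-order ODE (whose homogeneous solutions are $\sinh(R_k y)$ and $\cosh(R_k y)$) with the homogenized boundary conditions; this gives closed-form expressions for $\dot w_1$ and $\dot w_2$ mode by mode.

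Next I would linearize $\mathcal{B}_1$ and $\mathcal{B}_2$. Expanding $\mathcal{B}_1$, the quadratic term $|\nabla(w_2\circ\Phi_{(f,h)}^{-1})|^2$ at the laminar state linearizes to $2\Lambda\bigl[\partial_y \dot w_2 + (\text{diffeomorphism correction})\bigr]\big|_{y=0}$, since $\partial_x \psi_2^0=0$ and $\partial_y\psi_2^0(0)=\Lambda$; together with the $2gh$ term this will contribute the first row of the matrix. For $\mathcal{B}_2$, the linearization at $y=-d_2$ picks up $\partial_y \dot w_2-\partial_y \dot w_1$ plus correction terms coming from the jump of $\partial_y^2\psi_i^0=\gamma_i$ across the interface (these produce the $\gamma_2-\gamma_1$ contribution and the factor $(\Lambda-\gamma_2 d_2)=\partial_y\psi_2^0(-d_2)=\partial_y\psi_1^0(-d_2)$ multiplying the Dirichlet-to-Neumann-type operators). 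Evaluating on the $k$-th Fourier mode and collecting coefficients produces \eqref{a11}--\eqref{a22}; the hyperbolic symbols $R_k/\sinh(R_k d_j)$ and $R_k/\tanh(R_k d_j)$ arise precisely as the Dirichlet-to-Neumann maps of the strips $[-d_2,0]$ and $[-d,-d_2]$.

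The main obstacle will be the bookkeeping of the chain-rule contributions coming from the flattening diffeomorphisms $\Phi_f$ and $\Phi_{(f,h)}$. The operators $\mathcal{A}(f,h)$ and the compositions in $\mathcal{B}_1, \mathcal{B}_2$ involve pulling $\nabla$ and $\partial_y$ through $\Phi_{(f,h)}^{-1}$, and their Fr\'echet derivatives produce multiple first-order terms involving $f'$, $h'$, $f$, $h$ evaluated at the boundary; these must be combined with $\partial_y^2\psi_i^0=\gamma_i$ and with the values $\partial_y\psi_2^0(0)=\Lambda$, $\partial_y\psi_i^0(-d_2)=\Lambda-\gamma_2 d_2$, $\partial_y\psi_1^0(-d)=\Lambda-\gamma_2 d_2-\gamma_1 d_1$ (so that the identities \eqref{top}--\eqref{bed} appear naturally). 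Carrying this out cleanly, ideally by writing $w_i = \psi_i^0 + \dot w_i + O(|(f,h)|^2)$ at the boundary after composition, and then reading off the symbol from the Fourier decomposition of $f$ and $h$, yields the stated formulas. The case $k=0$ is obtained by sending $R_k\to 0$, which corresponds to the Fr\'echet derivative acting on the constant-mode subspace and is a straightforward limit.
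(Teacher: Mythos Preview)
Your proposal is correct and follows essentially the same approach as the paper: the paper's proof (given in the Appendix) computes each partial derivative $\partial_f\Phi_i(\Lambda,0)$ and $\partial_h\Phi_i(\Lambda,0)$ separately by (i) identifying $\dot w_i=\partial_{(f,h)}w_i(\Lambda,0)[(f,h)]$ as the solution of a Dirichlet problem $\Delta z=-\partial_{(f,h)}\mathcal{A}(0,0)[(f,h)]\psi_i^0$ with zero boundary data, (ii) expanding in Fourier cosine modes and solving the resulting ODEs explicitly in terms of $\sinh(R_ky)$ and $\cosh(R_ky)$, and (iii) substituting into the linearized boundary operators $\mathcal{B}_1,\mathcal{B}_2$ (whose explicit forms \eqref{Af}--\eqref{B2} supply exactly the chain-rule corrections you anticipate). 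Your interpretation of the symbols $R_k/\sinh(R_kd_j)$ and $R_k/\tanh(R_kd_j)$ as Dirichlet-to-Neumann maps of the two strips is a nice conceptual gloss that the paper does not make explicit, but the underlying computation is identical.
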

\begin{proof}
 See Appendix.
\end{proof}

With the help of Lemma \ref{L:FD} we are now able to determine when the Fr\'echet derivative  $\partial_{(f,h)}\Phi(\Lambda,0)$ is a Fredholm operator.  
\begin{lem}\label{L:FP}
 Let $\Lambda\in\mathbb{R}$ be given.
 We have:
 \begin{itemize}
  \item[$(i)$] If $\Lambda\in\{0,\gamma_2d_2\},$ then $\partial_{(f,h)}\Phi(\Lambda,0)$ is  not a Fredholm operator.
    \item[$(ii)$] If $\Lambda\not\in\{0,\gamma_2d_2\},$ then $\partial_{(f,h)}\Phi(\Lambda,0)$ is  a Fredholm operator of index zero.
 \end{itemize}
\end{lem}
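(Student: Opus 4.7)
My plan is to exploit the Fourier structure of $T(\Lambda) := \partial_{(f,h)} \Phi(\Lambda, 0)$. Since the H\"older spaces $C^{3+\alpha}_{e,per}(\mathbb{R})$ and $C^{2+\alpha}_{e,per}(\mathbb{R})$ decompose via the $\cos(R_k x)$ basis and each $A_{ij}$ from Lemma~\ref{L:FD} acts diagonally, $T(\Lambda)$ acts mode-by-mode through the $2 \times 2$ symbol matrix $M_k(\Lambda)$ assembled from~\eqref{a11}--\eqref{a22}. Using the elementary asymptotics $R_k/\sinh(R_k d_i) = O(e^{-R_k d_i})$ and $R_k[\coth(R_k d_i) - 1] = O(R_k e^{-2R_k d_i})$ as $k \to \infty$, I would separate out a principal part
\[
\widetilde{M}_k(\Lambda) := \begin{pmatrix} 0 & -2\Lambda^2 R_k \\ 2(\Lambda-\gamma_2 d_2) R_k & 0 \end{pmatrix}, \qquad N_k(\Lambda) := M_k(\Lambda) - \widetilde{M}_k(\Lambda),
\]
so that the diagonal entries of $N_k(\Lambda)$ decay exponentially in $k$ and the off-diagonal entries are uniformly bounded (a constant plus an exponentially decaying piece). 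Writing $T(\Lambda) = T_*(\Lambda) + K(\Lambda)$ for the associated Fourier multipliers, the bounded symbols of $K(\Lambda)$ produce a bounded map $(C^{3+\alpha}_{e,per}(\mathbb{R}))^2 \to (C^{3+\alpha}_{e,per}(\mathbb{R}))^2$; composition with the compact Arzel\`a--Ascoli embedding $C^{3+\alpha}_{e,per}(\mathbb{R}) \hookrightarrow C^{2+\alpha}_{e,per}(\mathbb{R})$ then shows that $K(\Lambda)$ is compact into the target. Thus the Fredholm properties of $T(\Lambda)$ coincide with those of $T_*(\Lambda)$.

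For case $(ii)$, when $\Lambda \notin \{0, \gamma_2 d_2\}$, the constant matrix $A := \begin{pmatrix} 0 & -2\Lambda^2 \\ 2(\Lambda - \gamma_2 d_2) & 0 \end{pmatrix}$ is invertible, and $T_*(\Lambda) = A \circ (\mathcal{R} \oplus \mathcal{R})$, where $\mathcal{R}$ denotes the scalar Fourier multiplier with symbol $R_k$. The operator $\mathcal{R} : C^{3+\alpha}_{e,per}(\mathbb{R}) \to C^{2+\alpha}_{e,per}(\mathbb{R})$ is Fredholm of index zero, its kernel being the line of constant functions and its image the closed codimension-one subspace of mean-zero functions, as one sees by constructing an explicit inverse on mean-zero data via the Hilbert transform and one antidifferentiation. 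Consequently $T_*(\Lambda)$ is Fredholm of index zero, and compact perturbation by $K(\Lambda)$ preserves both the Fredholm property and the vanishing of the index.

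For case $(i)$ I would argue directly that $\mathrm{Im}\, T(\Lambda)$ fails to be closed in $(C^{2+\alpha}_{e,per}(\mathbb{R}))^2$, which rules out Fredholmness. When $\Lambda = 0$, the symbols collapse to $T(0)(f,h) = (2gh,\, A_{21} f)$, so the first component of any image pair lies in $C^{3+\alpha}_{e,per}(\mathbb{R})$; picking any $u \in C^{2+\alpha}_{e,per}(\mathbb{R}) \setminus C^{3+\alpha}_{e,per}(\mathbb{R})$ and approximating $u/(2g)$ in the $C^{2+\alpha}$-norm by elements $h_n \in C^{3+\alpha}_{e,per}(\mathbb{R})$ gives a sequence $(2g h_n, 0) \in \mathrm{Im}\, T(0)$ converging to $(u, 0) \notin \mathrm{Im}\, T(0)$. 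When $\Lambda = \gamma_2 d_2$, one checks that $m^{21}_k \equiv \gamma_2 - \gamma_1$ is a nonzero constant (this is exactly where the standing hypothesis $\gamma_1 \neq \gamma_2$ enters), $m^{11}_k = 0$, and $m^{22}_k$ decays exponentially, so an analogous approximation argument in the second coordinate again produces a sequence in the image converging to a pair outside it.

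The main technical hurdle I anticipate is the compactness of $K(\Lambda)$: one must justify that bounded (respectively exponentially decaying) Fourier symbols define bounded (respectively smoothing) multipliers on the H\"older spaces at hand. This reduces to splitting each entry of $N_k(\Lambda)$ into a constant piece (acting as a genuine multiplication operator) and an exponentially decaying remainder (which maps into $C^\infty$), and invoking standard Fourier-multiplier boundedness on periodic H\"older spaces; all the other steps are elementary algebraic manipulations with the explicit symbols from Lemma~\ref{L:FD}.
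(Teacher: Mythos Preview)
Your argument is correct; part $(i)$ matches the paper's proof essentially verbatim, while part $(ii)$ reaches the same conclusion through a genuinely different decomposition.

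For $(ii)$ the paper does \emph{not} isolate a principal symbol. Instead it observes that $D(k,\Lambda)\neq 0$ for all $k\geq k_0$, replaces the finitely many low-mode symbols $m^{ij}_k$, $0\leq k<k_0$, by the identity matrix, and notes that this finite-rank modification is compact. The resulting operator $T$ is then shown to be an \emph{isomorphism}: its mode-by-mode inverse is written down via Cramer's rule with entries $\widetilde m^{ij}_k(\Lambda)/\widetilde D(k,\Lambda)$, and boundedness of this inverse from $C^{2+\alpha}_{e,per}$ to $C^{3+\alpha}_{e,per}$ is verified using the multiplier criterion of \cite{JL12}, namely $\sup_k|k\lambda_k|<\infty$ and $\sup_k k^2|\lambda_{k+1}-\lambda_k|<\infty$. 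Your route instead strips off the anti-diagonal part linear in $R_k$, treats the bounded remainder as a constant piece plus an exponentially decaying smoothing piece (hence compact after the embedding $C^{3+\alpha}\hookrightarrow C^{2+\alpha}$), and reduces the principal part to the scalar multiplier $\mathcal{R}$ with symbol $R_k$, whose Fredholm index zero property follows from the classical boundedness of the periodic conjugate function on H\"older spaces together with one antidifferentiation. The paper's approach is slightly more hands-on and invokes an external multiplier theorem; yours is more structural, trading \cite{JL12} for Privalov-type boundedness of the Hilbert transform. Both are standard tools, and neither proof is materially shorter than the other.
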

\begin{proof}
 In order to prove $(i)$, we infer from \eqref{a11} and \eqref{a12} that for $\Lambda=0$ we have 
 \[
 \partial_{(f,h)}\Phi_1(\Lambda,0)[(f,h)]=2gh \qquad\text{for all $(f,h)\in\big(C^{3+\alpha}_{e,per}(\mathbb{R})\big)^2$,}
 \]
 meaning that ${\rm Im\,} \partial_{(f,h)}\Phi_1(\Lambda,0)=C^{3+\alpha}_{e,per}(\mathbb{R}).$
 Since $C^{3+\alpha}_{e,per}(\mathbb{R})$ is 
 not a closed subspace of $C^{2+\alpha}_{e,per}(\mathbb{R})$, the assertion is evident.
 Furthermore, if $\Lambda=\gamma_2d_2\neq0,$ then 
 \[
 \partial_{(f,h)}\Phi_2(\Lambda,0)[(f,h)]=(\gamma_2-\gamma_1)f+\mathcal{K}h \qquad\text{for all $(f,h)\in\big(C^{3+\alpha}_{e,per}(\mathbb{R})\big)^2$,}
 \]
 whereby
 \[
\mathcal{ K}\sum_{k\in\mathbb{N}} b_k\cos(R_kx)=-\Lambda\sum_{k\in\mathbb{N}} \frac{R_k}{\sinh(R_kd_2)}b_k\cos(R_kx).
 \]
 It is not difficult to see that $\mathcal{K}\big(C^{3+\alpha}_{e,per}(\mathbb{R})\big)\subset C^{\infty}_{e,per}(\mathbb{R}),$ so that ${\rm Im\,} \partial_{(f,h)}\Phi_2(\Lambda,0)= C^{3+\alpha}_{e,per}(\mathbb{R}).$ 
 Hence, ${\rm Im\,} \partial_{(f,h)}\Phi(\Lambda,0)$ is not  a closed subspace of $\big(C^{2+\alpha}_{e,per}(\mathbb{R})\big)^2.$
 This proves $(i)$.
 
 To prove $(ii)$,  we choose $\Lambda\not\in\{0,\gamma_2d_2\}$ and set
 \[D(k,\Lambda):=m_k^{11}(\Lambda)m_k^{22}(\Lambda)-m_k^{12}(\Lambda)m_k^{21}(\Lambda),\qquad k\in\mathbb{N}.\]
 From \eqref{a11}-\eqref{a22} it is clear that there exists $k_0\in\mathbb{N}$ such that $D(k,\Lambda)\neq0$ for all $k\geq k_0.$
 Defining the symbols $\widetilde m^{ij}(\Lambda)$ by $\widetilde m^{ij}_k(\Lambda)= m^{ij}_k(\Lambda)$ for $k\geq k_0$ and $1\leq i,j\leq 2$ and 
 \[\text{$\widetilde m^{11}_k(\Lambda)=\widetilde m^{22}_k(\Lambda)=1$,\quad $\widetilde m^{12}_k(\Lambda)=\widetilde m^{21}_k(\Lambda)=0$ \qquad for $0\leq k\leq k_0-1,$}\]
 we see that $\partial_{(f,h)}\Phi(\Lambda,0)$ is a compact perturbation of the operator 
 \[
 T:=\begin{pmatrix}
                            \widetilde A_{11}&\widetilde A_{12}\\
                           \widetilde A_{21}&\widetilde A_{22} 
                           \end{pmatrix}\in\mathcal{L}\big(\big(C^{3+\alpha}_{e,per}(\mathbb{R})\big)^2, \big(C^{2+\alpha}_{e,per}(\mathbb{R})\big)^2\big),
 \]
 where  $\widetilde A_{ij}\in \mathcal{L}\big(C^{3+\alpha}_{e,per}(\mathbb{R}), C^{2+\alpha}_{e,per}(\mathbb{R})\big) $ is the   Fourier multiplier defined by $\widetilde m^{ij}(\Lambda)$, $1\leq i,j\leq 2.$
 Because  $\widetilde D(k,\Lambda):=\widetilde m_k^{11}(\Lambda)\widetilde m_k^{22}(\Lambda)-\widetilde m_k^{12}(\Lambda)\widetilde m_k^{21}(\Lambda)\neq0 $ for all $k\in\mathbb{N},$ we can define the  formal inverse of  $T$ by  
 \[
 S:=\begin{pmatrix}
                           \widetilde B_{11}&\widetilde B_{12}\\
                           \widetilde B_{21}&\widetilde B_{22} 
                           \end{pmatrix}.
 \]
Here, $\widetilde B_{ij}$ is the Fourier multiplier corresponding to the symbol $b^{ij}$, $1\leq i,j \leq 2,$
 whereby 
 \[ b^{11}_k:=\frac{\widetilde m^{22}_k(\Lambda)}{\widetilde D(k,\Lambda)},\quad b^{12}_k:=-\frac{\widetilde m^{12}_k(\Lambda)}{\widetilde D(k,\Lambda)},\quad 
 b^{21}_k:=-\frac{\widetilde m^{21}_k(\Lambda)}{\widetilde D(k,\Lambda)},\quad b^{22}_k:=\frac{\widetilde m^{11}_k(\Lambda)}{\widetilde D(k,\Lambda)} \qquad\text{for $k\in\mathbb{N}$.}\]
Using now \cite[Theorem 2.1]{JL12}, we see that a Fourier multiplier 
$$\sum_{k\in\mathbb{N}}\alpha_k\cos\left(R_k x\right)\rightarrow \sum_{k\in\mathbb{N}}\lambda_k\alpha_k\cos\left(R_kx\right)$$
belongs to $\mathcal{L}\big(C_{e,per}^{2+\alpha}(\mathbb{R}), C_{e,per}^{3+\alpha}(\mathbb{R})\big)$ if 
\[\text{$\sup_{k\in\mathbb{N}}\vert k \lambda_k\vert<\infty$\qquad\text{and}\qquad $\sup_{k\in\mathbb{N}} k^2\vert \lambda_{k+1}-\lambda_k\vert <\infty$.}\]
Because of this, it is a matter of direct computation to see that $\widetilde B_{ij}\in \mathcal{L}\big(C^{2+\alpha}_{e,per}(\mathbb{R}), C^{3+\alpha}_{e,per}(\mathbb{R})\big) $ for all $1\leq i,j\leq 2.$
Hence, $T$ is an isomorphism, and therefore $\partial_{(f,h)}\Phi(\Lambda,0)$ is a Fredholm operator of index zero.
\end{proof}

Because of Lemma \ref{L:FP} $(i)$ it is clear that we cannot apply the Crandall-Rabinowitz bifurcation theorem  at  $(\Lambda,0)$ with $\Lambda\in\{0,\gamma_2d_2\}.$
As a consequence of this, the laminar flows from which we show that non laminar waves bifurcate 
will not possess stagnation points at the wave surface or on the interface separating the two layers of constant vorticity, cf. \eqref{top}-\eqref{bed}, but only inside the layers. 
This is different than in the case of internal waves propagating between two layers of constant but different density, where in  the presence of capillarity stagnation points may be located also on the internal wave, cf. \cite{Ma12a}.

It is now evident that potential bifurcation values for $\Lambda\not\in\{0,\gamma_2d_2\}$ are to be looked for among the solutions of 
\begin{equation}\label{DR}
D(k,\Lambda)=0
\end{equation}
for some integer $k\geq 1$.
Since in Theorem \ref{CR} the Fr\'echet derivative  $\partial_{(f,h)}\Phi(\Lambda,0)$ needs to be a Fredholm operator of index zero with a one-dimensional kernel,
we need to find  $\Lambda $ such that  \eqref{DR} has exactly one root $1\leq k\in \mathbb{N}.$ 
Plugging the expressions \eqref{a11}-\eqref{a22} in \eqref{DR}, we rediscover the dispersion relation 
\begin{align}\label{DRR}
\Lambda^3 &-\frac{1}{R_k}\Big[\gamma_2\Big(R_k d_2+\frac{\sinh(R_k d_2)\cosh(R_k d_1)}{\cosh(R_k d)}\Big)
+\gamma_1 \frac{\sinh(R_k d_1)\cosh(R_k d_2)}{\cosh(R_k d)}\Big]\Lambda^2 \nonumber \\
&+\tanh(R_k d)\Big[\frac{\gamma_2^{2}d_2-g}{R_k}+ \gamma_2(\gamma_1 -\gamma_2) 
\frac{\sinh(R_k d_1)\sinh(R_k d_2)}{R_k^2\sinh(R_k d)}\Big]\Lambda \nonumber\\
&+g\frac{\tanh(R_k d)}{R_k^2}\Big[(\gamma_1 -\gamma_2)
\frac{\sinh(R_k d_1)\sinh(R_k d_2)}{\sinh(R_k d)}+\gamma_2 d_2R_k\Big]=0,
 \end{align}
found also in  \cite[Equation (5.11)]{CM14} (with $\sigma=0$).
 This relation has been analyzed in the setting of flows without stagnation points in \cite{AC12a} for $\gamma_1\neq 0=\gamma_2$ and in \cite{CoSt11} for $\gamma_1=0\neq\gamma_2$.
Herein, we assume only that $\gamma_1\neq\gamma_2$ and restrict the analysis to the complementary case when stagnation points are included.

In studying the dispersion relation \eqref{DRR} we will make use of the following remark, which allows us to restrict our attention to a few of relevant cases, 
the remaining ones being analogous.
 \begin{rem}\label{R:R}
Note that \eqref{DRR} possesses the following symmetry property: $k$ is a solution of \eqref{DRR} for some $\Lambda\not\in\{0,\gamma_2d_2\}$ and  $(\gamma_1,\gamma_2)\in\mathbb{R}^2$ if and only 
 if $k$ is a solution of \eqref{DRR} for  $-\Lambda\not\in\{0,-\gamma_2d_2\}$  and $(-\gamma_1,-\gamma_2)\in\mathbb{R}^2.$
 Because additionally the inequalities \eqref{top} and \eqref{bed} are invariant under the transformation $(\Lambda,(\gamma_1,\gamma_2))\mapsto (-\Lambda,(-\gamma_1,-\gamma_2)),$
 we are left  only with the two cases:
 \begin{itemize}
  \item[$(i)$]   $\gamma_2>0$  and $\gamma_1\neq \gamma_2;$
  \item[$(ii)$] $\gamma_2= 0$ and $\gamma_1< 0$.
 \end{itemize}
  \end{rem}

\section{Analysis of the dispersion relation: the case $\gamma_2>0$ and $\gamma_1\neq \gamma_2$} \label{Sec:3}
Because the dispersion relation is highly nonlinear in $k$, the study of the roots of \eqref{DRR} when keeping $\Lambda$ fixed seems to be very difficult.
Therefore, we consider the  inverse problem of determining the zeros $\Lambda_1,$ $\Lambda_2,$ $\Lambda_3$ of this cubic equation when keeping $k$ fixed, and then to study the properties of the mappings $k\mapsto \Lambda_i(k)$, $1\leq i\leq 3.$  
We will do this for small wavelength $L,$ because then we can use asymptotic expansions and Cardano's formula in order to determine the roots $\Lambda_i$ of \eqref{DRR}. 
This small wavelength regime corresponds to the setting  $t\to\infty,$
where
\[
t:=R_k=\frac{2\pi k}{L}\in\mathbb{R}.
\]
Plugging $t$ for $R_k$, the equation \eqref{DRR} can be written in the more concise form
\begin{equation}\label{CF}
 \Lambda^3+A(t)\Lambda^2+B(t)\Lambda+C(t)=0.
\end{equation}
We will show in the sequel that equation \eqref{CF} has three real roots when $t$ is sufficiently large.
To this end, we note first that the coefficient functions $A=A(t), $ $B=B(t),$ and $C=C(t)$ and their first derivatives have the following asymptotic expansions for $t\to\infty$:
\begin{equation}\label{A}
\begin{array}{lll}
 &\displaystyle A=-\gamma_2 d_2-\frac{\gamma_1 +\gamma_2}{2t}+o\Big(\frac{1}{t^2}\Big),&\displaystyle A'=\frac{\gamma_1 +\gamma_2}{2}\cdot\frac{1}{t^2}+o\Big(\frac{1}{t^3}\Big),\\[2ex]
 &\displaystyle B=\frac{\gamma_2^2 d_2-g}{t}+\frac{\gamma_2 (\gamma_1 -\gamma_2)}{2t^2}+o\Big(\frac{1}{t^3}\Big),
 &\displaystyle
 B'=-\frac{\gamma_2^2 d_2-g}{t^2}-\frac{\gamma_2 (\gamma_1 -\gamma_2)}{t^3}+ o\Big(\frac{1}{t^4}\Big),\\[2ex]
 &\displaystyle C=\frac{g\gamma_2 d_2}{t}+\frac{g(\gamma_1 -\gamma_2)}{2t^2}+o\Big(\frac{1}{t^3}\Big),&\displaystyle C'=-\frac{g\gamma_2 d_2}{t^2}-\frac{g(\gamma_1 -\gamma_2)}{t^3}+ o\Big(\frac{1}{t^4}\Big).
\end{array}
\end{equation}
Letting $z:=\Lambda +A/3$, we find that $z$ solves the depressed cubic equation 
\begin{equation}\label{redcu}
z^3+pz+q=0,
\end{equation}
with
\begin{align*}
 \frac{p}{3}=\frac{B}{3}-\frac{A^2}{9}=-\frac{(\gamma_2 d_2)^2}{9}+\frac{\gamma_2 d_2 (2\gamma_2 -\gamma_1)-3g}{9t}+\frac{4\gamma_1 \gamma_2 -7\gamma_2^2-\gamma_1^2}{36t^2}+o\Big(\frac{1}{t^3}\Big)
\end{align*}
and 
\begin{align*} 
 \frac{q}{2}=&\frac{A^3}{27}-\frac{AB}{6}+\frac{C}{2}\nonumber\\
 =&-\frac{(\gamma_2 d_2)^3}{27}+\gamma_2 d_2\frac{6g+\gamma_2 d_2(2\gamma_2 -\gamma_1)}{18t}-\Big[\gamma_2 d_2\frac{\gamma_1 ^2-\gamma_1\gamma_2 +\gamma_2^2}{36}+\frac{g(2\gamma_2 -\gamma_1)}{3}\Big]
 \frac{1}{t^2}\nonumber\\
& +\frac{9\gamma_2 (\gamma_1 ^2-\gamma_2 ^2)-(\gamma_1 +\gamma_2)^3}{216}\frac{1}{t^3}+o\Big(\frac{1}{t^4}\Big).
\end{align*}
Observe that the discriminant for \eqref{redcu} is
\[
D:=\Big(\frac{p}{3}\Big)^3+\Big(\frac{q}{2}\Big)^2=-\frac{9g(\gamma_2d_2)^4}{243t}+O\Big(\frac{1}{t^2}\Big)<0 \qquad\text{for $t\to\infty$},
\]
property which implies, cf. \cite{Ti01}, that \eqref{redcu}, and hence also \eqref{CF}, has three real roots.
They are given by the relation $z=r\cos(\beta),$
whereby 
\begin{align}\label{rt} 
r=\sqrt{\frac{-4p}{3}}=\frac{2\gamma_2d_2}{3}-\frac{\gamma_2 d_2(2\gamma_2-\gamma_1)-3g}{3\gamma_2d_2t}+O\Big(\frac{1}{t^2}\Big)
\end{align}
and $\beta$ is one of   the solution of 
\begin{equation*}
 \cos(3\beta)=-\frac{q}{2}\sqrt{-\frac{27}{p^3}}=1-\frac{3^3g}{2(\gamma_2d_2)^2t}+O\Big(\frac{1}{t^2}\Big).
\end{equation*}
Thus, choosing $ \beta:=3^{-1}\arccos\big(-(q/2)\sqrt{-27/p^3}\big)$ we see that $\beta(t)\searrow_{t\to\infty} 0$ and   the roots of \eqref{CF} are
\begin{equation}\label{Lambdas}
 \begin{aligned}
& { \Lambda_1 =r\cos(\beta)-\frac{A}{3}},\\
&\Lambda_2 =r\cos\Big(\beta-\frac{2\pi}{3}\Big){-\frac{A}{3}} =-r\cos\Big(\beta+\frac{\pi}{3}\Big)-\frac{A}{3},\\
&\Lambda_3=r\cos\Big(\beta+\frac{2\pi}{3}\Big){-\frac{A}{3} } =-r\cos\Big(\beta-\frac{\pi}{3}\Big)-\frac{A}{3}. 
 \end{aligned} 
\end{equation}
Together with \eqref{A} and \eqref{rt}, it follows at once that for $t\to\infty$ we have
\begin{equation}\label{limlambda1}
\Lambda_1(t)\to\gamma_2 d_2,\qquad \Lambda_2(t)\to 0,\qquad \Lambda_3(t)\to 0.
\end{equation}
Let us also observe that since $C(t)>0$ for $t\to\infty,$ it must hold that $\Lambda_2(t)\Lambda_3(t)<0$ for $t\to\infty.$
Moreover, it is clear from \eqref{Lambdas} that $\Lambda_2(t)>\Lambda_3(t),$ hence $$\Lambda_2(t)>0>\Lambda_3(t)\qquad\text{for $t\to\infty$.}$$

\subsection{Existence of water flows  bifurcating from $\Lambda_1$}
In order to consider the bifurcation problem for \eqref{NP}, we need to study first the properties of the mapping $[t\mapsto\Lambda_1(t)].$
\begin{lem}\label{Lm}
There is a constant $t_0\geq0$  such that the function $$[[t_0,\infty)\ni t\mapsto\Lambda_1(t)\in(0,\infty)]$$ is strictly monotone.
\end{lem}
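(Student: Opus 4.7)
The plan is to establish strict monotonicity of $\Lambda_1$ by computing $\Lambda_1'(t)$ via implicit differentiation of the cubic equation \eqref{CF} and then reading off its sign from the asymptotic expansions \eqref{A} of the coefficient functions and their derivatives. Differentiating the identity $\Lambda_1^3+A\Lambda_1^2+B\Lambda_1+C=0$ with respect to $t$ yields
\[
\Lambda_1'(t)=-\frac{A'(t)\Lambda_1^2+B'(t)\Lambda_1+C'(t)}{3\Lambda_1^2+2A(t)\Lambda_1+B(t)},
\]
so the task reduces to controlling the numerator and denominator separately as $t\to\infty$.

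For the denominator, the limits $\Lambda_1(t)\to\gamma_2 d_2$, $A(t)\to-\gamma_2 d_2$, $B(t)\to 0$ from \eqref{limlambda1} and \eqref{A} give
\[
3\Lambda_1^2+2A\Lambda_1+B\;\longrightarrow\;3(\gamma_2 d_2)^2-2(\gamma_2 d_2)^2=(\gamma_2 d_2)^2>0,
\]
so the denominator is positive and bounded away from zero for all $t$ sufficiently large. For the numerator, the expansions \eqref{A} give $A',B',C'=O(1/t^2)$, while from \eqref{Lambdas} together with $\beta(t)\searrow 0$ one also reads off $\Lambda_1(t)=\gamma_2 d_2+O(1/t)$. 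Consequently the corrections of $\Lambda_1$ and $\Lambda_1^2$ only contribute at order $1/t^3$, and a direct computation yields
\[
A'(t)\Lambda_1^2+B'(t)\Lambda_1+C'(t)=\frac{1}{t^2}\!\left[\frac{(\gamma_1+\gamma_2)(\gamma_2 d_2)^2}{2}-(\gamma_2^2 d_2-g)\gamma_2 d_2-g\gamma_2 d_2\right]+o\!\left(\frac{1}{t^2}\right).
\]
The cancellation of the two $g\gamma_2 d_2$ terms is the decisive simplification, reducing the bracket to $(\gamma_2 d_2)^2(\gamma_1-\gamma_2)/2$, which is non-zero precisely because $\gamma_2>0$ and $\gamma_1\neq\gamma_2$.

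Putting the two pieces together, $\Lambda_1'(t)$ has the fixed sign $\mathrm{sign}(\gamma_2-\gamma_1)$ for every $t\geq t_0$, once $t_0$ is taken large enough for the $o(1/t^2)$ remainders to be dominated by the leading term and for the denominator to be bounded below by, say, $(\gamma_2 d_2)^2/2$. This delivers strict monotonicity on $[t_0,\infty)$. The inclusion $\Lambda_1([t_0,\infty))\subset(0,\infty)$ then follows from $\Lambda_1(t)\to\gamma_2 d_2>0$ by enlarging $t_0$ if necessary. The only genuine obstacle is bookkeeping: one must verify that the two leading $1/t^2$ contributions do not cancel, and this is exactly the point at which the standing hypotheses $\gamma_2>0$ and $\gamma_1\neq\gamma_2$ enter.
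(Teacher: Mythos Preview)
Your proof is correct and follows essentially the same route as the paper: implicit differentiation of the cubic relation, then reading off the signs of numerator and denominator from the asymptotics \eqref{A} and \eqref{limlambda1}. The only cosmetic difference is that the paper writes the denominator as $\phi_\Lambda(t,\Lambda_1(t))=(\Lambda_1-\Lambda_2)(\Lambda_1-\Lambda_3)$ and invokes the ordering of the roots to see positivity, whereas you compute its limit $(\gamma_2 d_2)^2$ directly; both arrive at the same conclusion that $\Lambda_1'$ has the fixed sign of $\gamma_2-\gamma_1$ for large $t$.
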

\begin{proof}
Let  $\phi(t,\Lambda):=\Lambda^3+A(t)\Lambda^2+B(t)\Lambda+C(t) $ for $\Lambda\in\mathbb{R}$ and $t\geq0.$  
Since for $t\to\infty$ we have 
\begin{equation}\label{fLambda}
 \phi_{\Lambda}(t,\Lambda_1 (t))=(\Lambda_1(t)-\Lambda_2(t))(\Lambda_1(t)-\Lambda_3(t))>0,
\end{equation}
we conclude that $\Lambda_1$ is differentiable with respect to $t.$
On the other hand
$$t^2\Phi_t(t,\Lambda_1(t))=t^2\big(\Lambda_1^2(t) A^{\prime}(t)+\Lambda_1(t) B^{\prime}(t)+C^{\prime}(t)\big) \to_{t\to\infty}\frac{(\gamma_2 d_2)^2(\gamma_1 -\gamma_2)}{2}.$$
Since $\Lambda_1'(t)= -\phi_t(t,\Lambda_1(t))/\phi_{\Lambda}(t,\Lambda_1 (t)),$ we conclude that $\Lambda_1'$ has the same sign as $\gamma_2-\gamma_1.$
The constant $t_0$ is defined as $t_0:=\inf\{t>0\,:\, \text{$|\Lambda_1'|>0$ on $(t,\infty)$}\}.$
\end{proof}

From Lemma \ref{Lm} it follows at once that
\begin{itemize}
 \item if $ \gamma_1<\gamma_2,$ then $\Lambda_1(t)$ satisfies \eqref{top} for $t\geq t_0$;
 \item if $\gamma_1>\gamma_2,$ then $\Lambda_1(t)$ satisfies \eqref{bed} for $t\geq t_0$.
\end{itemize}

We look now for bifurcation solutions when choosing $\Lambda_1$ as the bifurcation point.
Therefore, we choose $t_0>0$ in Lemma \ref{Lm} large enough to guarantee additionally that 
\begin{equation}\label{add}
\begin{aligned}
 &\inf_{[t_0,\infty)}\Lambda_1^2>\sup_{[t_0,\infty)}\big(\Lambda_2^2+\Lambda_3^2\big),\\
 & D(0,\Lambda_1(t))\neq0\qquad\text{for all $t\geq t_0.$}
 \end{aligned}
\end{equation}
Let \begin{equation}\label{L0a}
 L_0:=2\pi/t_0,    
    \end{equation}
fix $L\leq L_0,$ and set $\Lambda_1:=\Lambda_1(2\pi/L).$
Then, since $\phi(2\pi/L, \Lambda_1)=0$, we get
$D(1,\Lambda_1)=0.$
Due to  the choice of $t_0$, the equation $D(\cdot, \Lambda_1)=0$ has no solutions $k\in\mathbb{N}$  other than $k=1.$
Consequently, since $\Lambda_1\not\in\{0,\gamma_2d_2\},$  the derivative 
$\partial_{(f,h)}\Phi_1(\Lambda_1,0)$ is a Fredholm operator with a one-dimensional kernel
\begin{equation}\label{ke1}
 {\rm Ker\,}\partial_{(f,h)}\Phi_1(\Lambda_1,0)={\rm span\, }\big\{ \big(m^{22}_1(\Lambda_1),-m^{21}_1(\Lambda_1)\big)\cos(2\pi x/L)\big\}.
\end{equation}
In order to apply Theorem \ref{CR} to this particular setting, it remains to study whether the transversality condition  is satisfied.
To this end, we obtain the following characterization of ${\rm Im\,} \partial_{(f,h)}\Phi_1(\Lambda_1,0)$.
\begin{lem}\label{imagechar}
Let  $L_0$ be given by \eqref{L0a}, $L\leq L_0$, and set $\Lambda_1:=\Lambda_1(2\pi/L).$
Then, we have
\begin{equation}\label{im}
 {\rm Im\,} \partial_{(f,h)}\Phi(\Lambda_1,0)=\Big\{(\xi,\eta)=\Big( \sum_{k\in\mathbb{N}}\xi_k \cos(R_kx),\sum_{k\in\mathbb{N}}\eta_k \cos(R_kx) \Big)\,:\, \gamma_1=\frac{m^{11}_1(\Lambda_1)}{m^{21}_1(\Lambda_1)}\eta_1\Big\}.
\end{equation}
\end{lem}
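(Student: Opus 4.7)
The plan is to exploit the fact that $\partial_{(f,h)}\Phi(\Lambda_1,0)$ is a diagonal Fourier multiplier operator between spaces of even, periodic functions, combined with its Fredholm structure. By Lemma \ref{L:FP}$(ii)$ and the choice $\Lambda_1\notin\{0,\gamma_2d_2\}$, the operator is Fredholm of index zero, and by \eqref{ke1} its kernel is one-dimensional. Consequently its image is closed and of codimension one in $\big(C^{2+\alpha}_{e,per}(\mathbb{R})\big)^2$. It therefore suffices to produce a single nonzero continuous linear functional on $\big(C^{2+\alpha}_{e,per}(\mathbb{R})\big)^2$ that vanishes on the image, and to check that its kernel coincides with the right-hand side of \eqref{im}; the two codimension-one closed subspaces must then agree.

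The candidate functional is $\ell(\xi,\eta):=m^{21}_1(\Lambda_1)\xi_1-m^{11}_1(\Lambda_1)\eta_1$, where $\xi_1,\eta_1$ are the first Fourier coefficients of $\xi$ and $\eta$. The extraction of a single Fourier coefficient is plainly continuous on the Hölder space of even periodic functions, so $\ell$ is continuous. Nontriviality of $\ell$ follows from $m^{11}_1(\Lambda_1)\neq0$: indeed, by \eqref{a11} and the hypothesis $\Lambda_1\notin\{0,\gamma_2d_2\}$, the factor $\Lambda_1(\gamma_2d_2-\Lambda_1)$ is nonzero.

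Next I would verify that $\ell$ vanishes on the image. For an arbitrary $(f,h)=\big(\sum f_k\cos(R_kx),\sum h_k\cos(R_kx)\big)$, the multiplier structure from Lemma \ref{L:FD} gives
\begin{align*}
\xi_1&=m^{11}_1(\Lambda_1)f_1+m^{12}_1(\Lambda_1)h_1,\\
\eta_1&=m^{21}_1(\Lambda_1)f_1+m^{22}_1(\Lambda_1)h_1,
\end{align*}
so that a direct substitution yields
\[
\ell\bigl(\partial_{(f,h)}\Phi(\Lambda_1,0)[(f,h)]\bigr)=-\bigl(m^{11}_1(\Lambda_1)m^{22}_1(\Lambda_1)-m^{12}_1(\Lambda_1)m^{21}_1(\Lambda_1)\bigr)h_1=-D(1,\Lambda_1)h_1=0,
\]
using $D(1,\Lambda_1)=0$. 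Hence the image is contained in $\ker\ell$. Since both are closed subspaces of codimension one, they coincide, which is exactly the statement \eqref{im} (where $\gamma_1$ in the displayed condition is to be read as $\xi_1$).

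The only subtle point is the verification that $\ell$ really is well-defined and continuous on $(C^{2+\alpha}_{e,per}(\mathbb{R}))^2$, but this is immediate from the boundedness of the Fourier coefficient functional on Hölder spaces of even periodic functions. Everything else is purely algebraic, relying only on the dispersion identity $D(1,\Lambda_1)=0$ and the already established Fredholm/kernel information.
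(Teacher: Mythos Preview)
Your argument is correct and follows essentially the same route as the paper: you show the image is contained in a closed codimension-one subspace (phrased via the functional $\ell$, whereas the paper verifies the inclusion directly), and then invoke the Fredholm index-zero property with one-dimensional kernel to force equality. You also correctly flag that the ``$\gamma_1$'' appearing in the condition of \eqref{im} is a typo for $\xi_1$.
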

\begin{proof}
To prove the claim,  let $(f,h)=\left( \sum_{k\in\mathbb{N}}a_k \cos\left(R_kx\right),\sum_{k\in\mathbb{N}}b_k \cos\left(R_kx\right) \right)$ be such that
$\partial_{(f,h)}\Phi(\Lambda_1,0)(f,h)=(\xi,\eta)$. Then, obviously
\begin{equation*}
\left\{\begin{array}{c}
        m^{11}_1(\Lambda_1)  a_1+m^{12}_1(\Lambda_1) b_1=\gamma_1,\\[1ex]
        m^{21}_1(\Lambda_1) a_1+m^{22}_1(\Lambda_1) b_1=\eta_1.
        \end{array}\right. 
\end{equation*}
Because $D(1,\Lambda_1)=0,$ we find from \eqref{a11}-\eqref{a22} that $$\frac{m^{11}_1(\Lambda_1)}{m^{21}_1(\Lambda_1)}=\frac{m^{12}_1(\Lambda_1)}{m^{22}_1(\Lambda_1)}=:\mu\neq0.$$
Hence, $(\xi,\eta)$ is an element of the set defined by the right-hand side of \eqref{im}. 
Because the latter set is a closed  subspace of $\big(C^{2+\alpha}_{e,per}(\mathbb{R})\big)^2$ of codimension one that contains ${\rm Im\,} \partial_{(f,h)}\Phi(\Lambda_1,0),$ the conclusion follows from Lemma \ref{L:FP}.
\end{proof}
We are now at the point of showing the transversality condition $(c)$ from Theorem \ref{CR}.
\begin{lem}\label{L:trans}
We have that
$$\partial_{\Lambda  (f,h)}\Phi(\Lambda_1,0) \big[ \big(m^{22}_1(\Lambda_1),-m^{21}_1(\Lambda_1)\big)\cos(2\pi x/L)  \big]\notin{\rm Im\,} \partial_{(f,h)}\Phi(\Lambda_1,0).$$
\end{lem}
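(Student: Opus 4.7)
The plan is to compute the first Fourier mode of $\partial_{\Lambda(f,h)}\Phi(\Lambda_1,0)$ applied to the kernel generator from \eqref{ke1}, and then to reduce membership in ${\rm Im\,}\partial_{(f,h)}\Phi(\Lambda_1,0)$ to the single scalar condition $\partial_\Lambda D(1,\Lambda_1)\neq 0$, where $D(k,\Lambda):=m^{11}_k(\Lambda)m^{22}_k(\Lambda)-m^{12}_k(\Lambda)m^{21}_k(\Lambda)$; this last condition is then obtained from the fact that $\Lambda_1$ is a simple root of the dispersion cubic.

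First, I would exploit the Fourier-multiplier structure of Lemma~\ref{L:FD}: since $\partial_{(f,h)}\Phi(\Lambda,0)$ is a Fourier multiplier whose symbols $m^{ij}_k(\Lambda)$ are real-analytic in $\Lambda$, the mixed derivative $\partial_{\Lambda(f,h)}\Phi(\Lambda_1,0)$ is again a Fourier multiplier, now with symbols $\partial_\Lambda m^{ij}_k(\Lambda_1)$. Because the kernel element $(f_0,h_0):=(m^{22}_1(\Lambda_1),-m^{21}_1(\Lambda_1))\cos(2\pi x/L)$ is concentrated on the mode $k=1$, so is its image, which carries the first Fourier coefficients
\begin{align*}
g_1&:=(\partial_\Lambda m^{11}_1)(\Lambda_1)\,m^{22}_1(\Lambda_1)-(\partial_\Lambda m^{12}_1)(\Lambda_1)\,m^{21}_1(\Lambda_1),\\
g_2&:=(\partial_\Lambda m^{21}_1)(\Lambda_1)\,m^{22}_1(\Lambda_1)-(\partial_\Lambda m^{22}_1)(\Lambda_1)\,m^{21}_1(\Lambda_1).
\end{align*}

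Next, I would invoke Lemma~\ref{imagechar}, rewritten in the symmetric form: a pair $(\xi,\eta)$ belongs to the image precisely when its first Fourier coefficients satisfy $m^{21}_1(\Lambda_1)\xi_1=m^{11}_1(\Lambda_1)\eta_1$, or equivalently $m^{22}_1(\Lambda_1)\xi_1=m^{12}_1(\Lambda_1)\eta_1$. Expanding $\partial_\Lambda D(1,\Lambda)$ by the product rule at $\Lambda=\Lambda_1$ and substituting the kernel identity $m^{11}_1(\Lambda_1)m^{22}_1(\Lambda_1)=m^{12}_1(\Lambda_1)m^{21}_1(\Lambda_1)$ (which is precisely $D(1,\Lambda_1)=0$) should yield
$$m^{21}_1(\Lambda_1)g_1-m^{11}_1(\Lambda_1)g_2=m^{21}_1(\Lambda_1)\,\partial_\Lambda D(1,\Lambda_1),$$
together with the companion identity obtained by replacing the first row of the symbol matrix by the second. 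Since the kernel vector is nontrivial, at least one of $m^{21}_1(\Lambda_1)$, $m^{22}_1(\Lambda_1)$ is nonzero, so exactly one of these relations suffices to equate failure of image membership of $(g_1,g_2)$ with the condition $\partial_\Lambda D(1,\Lambda_1)\neq 0$.

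The final step is this nonvanishing. The polynomial $D(1,\cdot)$ is a nonzero scalar multiple of the cubic $\phi(R_1,\cdot)=(\Lambda-\Lambda_1)(\Lambda-\Lambda_2)(\Lambda-\Lambda_3)$ from \eqref{CF}, since both vanish on the same three-point set and have the same degree with a nonzero leading coefficient. The negativity of the discriminant established just before \eqref{rt} ensures three distinct real roots for $t$ large, and the additional hypothesis \eqref{add} guarantees $|\Lambda_1|>\max\{|\Lambda_2|,|\Lambda_3|\}$ for every $t\geq t_0$, so $\Lambda_1$ remains simple. Consequently $\partial_\Lambda D(1,\Lambda_1)$ is a nonzero multiple of $(\Lambda_1-\Lambda_2)(\Lambda_1-\Lambda_3)\neq 0$, and transversality follows. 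I expect the only delicate bookkeeping to be the algebraic identity reducing $m^{21}_1g_1-m^{11}_1g_2$ to $m^{21}_1\,\partial_\Lambda D(1,\Lambda_1)$, but this is mechanical once $\partial_\Lambda D(1,\Lambda_1)$ is expanded and the kernel relation is used to cancel two of the four resulting terms.
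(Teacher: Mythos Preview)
Your proposal is correct and follows essentially the same route as the paper: compute the mixed derivative as a Fourier multiplier, use the image characterization of Lemma~\ref{imagechar} to reduce transversality to $\partial_\Lambda D(1,\Lambda_1)\neq 0$, and then deduce this from the simplicity of $\Lambda_1$ as a root of the dispersion cubic (the paper invokes \eqref{fLambda} directly, while you argue via \eqref{add}). Your observation that $D(1,\cdot)$ is merely a nonzero scalar multiple of $\phi(R_1,\cdot)$, rather than equal to it, is in fact a small improvement in precision over the paper's phrasing.
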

\begin{proof}
Since for $a,b\in\mathbb{R},$ it holds that
\begin{align*}
&\partial_{\Lambda (f,h)}\Phi(\Lambda_1,0) \big[ (a,b)\cos(2\pi x/L)  \big]\\[1ex]
&=\big(a m^{11}_{1,\Lambda}(\Lambda_1)+bm^{12}_{1,\Lambda}(\Lambda_1), a m^{21}_{1,\Lambda}(\Lambda_1)+bm^{22}_{1,\Lambda}(\Lambda_1)\big)\cos(2\pi x/L), 
 \end{align*}
 we are left to show that 
 \begin{align*}
 m^{22}_1(\Lambda_1) m^{11}_{1,\Lambda}(\Lambda_1)-m^{21}_1(\Lambda_1)m^{12}_{1,\Lambda}(\Lambda_1)\neq
  \frac{m^{11}_1(\Lambda_1)}{ m^{21}_1(\Lambda_1)} \big(m^{22}_1(\Lambda_1) m^{21}_{1,\Lambda}(\Lambda_1)-m^{21}_1(\Lambda_1)m^{22}_{1,\Lambda}(\Lambda_1)\big),
 \end{align*}
or equivalently that
 \begin{align*}
m^{22}_1(\Lambda_1) m^{11}_{1,\Lambda}(\Lambda_1)-m^{21}_1(\Lambda_1)m^{12}_{1,\Lambda}(\Lambda_1) \neq m^{12}_1(\Lambda_1)  m^{21}_{1,\Lambda}(\Lambda_1)-m^{11}_1(\Lambda_1)m^{22}_{1,\Lambda}(\Lambda_1).
 \end{align*}
 Hence, we need to show that $ D_\Lambda(1,\Lambda_1)\neq0.$
 Recalling the definition of the mapping $\phi$ from the proof of Lemma \ref{Lm}, we have that $D(1,\Lambda)=\phi(2\pi/L,\Lambda),$
 and therefore
 \[
  D_\Lambda(1,\Lambda_1)=\Phi_\Lambda (2\pi/L,\Lambda_1(2\pi/L))>0,
 \]
 which is the desired property.
\end{proof}

\begin{thm}[Bifurcation from $\Lambda_1$]\label{MT1}
Let $\gamma_2>0,$  $\gamma_1\neq\gamma_2$ and let $\alpha\in(0,1)$ be given.
Furthermore, let $L_0$ be the constant defined by  \eqref{L0a}  and $L\leq L_0.$ 
Then, there exists a real-analytic curve $(\Lambda,(f,h)):(-\varepsilon,\varepsilon)\to (0,\infty)\times \mathcal{O}$ consisting only of solutions of the problem \eqref{NP}.
This curve contains exactly one trivial solution of \eqref{NP}, and for $s\to0$ we have that
\[\Lambda(s)=\Lambda_1+O(s),\qquad (f,h)(s)=s  \big(m^{22}_1(\Lambda_1),-m^{21}_1(\Lambda_1)\big)\cos(2\pi x/L)+O(s^2),\]
whereby $\Lambda_1:=\Lambda_1(2\pi/L).$
The flow determined by $(\Lambda(s),(f,h)(s)), s\in(-\varepsilon,\varepsilon),$ contains a critical layer consisting  of closed streamlines   very close to the internal wave 
\begin{itemize}
 \item[$(i)$] in the layer adjacent to the wave surface  if $\gamma_1<\gamma_2,$ or
 \item[$(ii)$] in the bottom layer if $\gamma_1>\gamma_2.$ 
\end{itemize}
Moreover, the amplitude of the internal wave    is much larger than that of  the surface wave, cf. Figure \ref{Fig1}.
\end{thm}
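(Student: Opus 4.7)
The plan is to deduce Theorem \ref{MT1} from the Crandall--Rabinowitz bifurcation theorem (Theorem \ref{CR}) applied to the operator $\Phi$ at the point $(\Lambda_1,0)\in\mathbb{R}\times\mathcal{O}$. The three hypotheses are supplied almost directly by the lemmas already in place; the geometric conclusions --- the location of the critical layer and the amplitude comparison --- then follow by combining the explicit form of the kernel element with the fact that $\Lambda_1$ lies close to $\gamma_2 d_2$.

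I would verify hypothesis (a) trivially, since the laminar flow corresponding to $(f,h)=0$ solves \eqref{PB2} and hence $\Phi(\Lambda,0)=0$ for all $\Lambda\in\mathbb{R}$. For hypothesis (b), the condition \eqref{add} ensures $\Lambda_1\notin\{0,\gamma_2 d_2\}$ and $D(0,\Lambda_1)\neq 0$, so by Lemma \ref{L:FP}$(ii)$ the derivative $\partial_{(f,h)}\Phi(\Lambda_1,0)$ is Fredholm of index zero. To show that the kernel is one-dimensional, I would argue that $D(k,\Lambda_1)=0$ for a positive integer $k$ is the equation $\phi(2\pi k/L,\Lambda_1)=0$; the strict monotonicity of $\Lambda_1(t)$ on $[t_0,\infty)$ from Lemma \ref{Lm}, together with $L\leq L_0=2\pi/t_0$, forces $k=1$ to be the unique positive integer solution, and the second part of \eqref{add} rules out the mode $k=0$. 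The kernel is therefore spanned by the null vector of the $2\times 2$ symbol matrix at $k=1$, namely $(m^{22}_1(\Lambda_1),-m^{21}_1(\Lambda_1))\cos(2\pi x/L)$, which is exactly \eqref{ke1}. Hypothesis (c) is precisely Lemma \ref{L:trans}. Theorem \ref{CR} then produces the real-analytic curve with the stated leading-order expansion and the uniqueness of the trivial intersection; the positivity $\Lambda(s)>0$ for small $|s|$ follows from continuity and $\Lambda_1>0$.

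For the streamline-pattern claim I would exploit that $\Lambda_1$ is very close to $\gamma_2 d_2$: combining \eqref{limlambda1} with the sign of $\Lambda_1'$ identified in the proof of Lemma \ref{Lm} gives $\Lambda_1\in(0,\gamma_2 d_2)$ when $\gamma_1<\gamma_2$ and $\Lambda_1>\gamma_2 d_2$ when $\gamma_1>\gamma_2$. In case $(i)$, the strict form of \eqref{top} locates a stagnation streamline $y=y^{*}=-\Lambda_1/\gamma_2$ of $\psi_2^0$ inside the top layer, close to the interface $y=-d_2$; structural-stability arguments in the spirit of \cite{EEW11,W09} then produce closed streamlines around $y=y^{*}$ for the bifurcating flows, which constitute the asserted critical layer. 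Case $(ii)$ is entirely analogous, with the stagnation streamline $y^{*}=-d_2-(\Lambda_1-\gamma_2 d_2)/\gamma_1$ of $\psi_1^0$ lying inside the bottom layer, again close to the interface. Finally, the amplitude comparison follows by reading the leading-order kernel expression $(f,h)(s)=s\,(m^{22}_1(\Lambda_1),-m^{21}_1(\Lambda_1))\cos(2\pi x/L)+O(s^2)$ against the asymptotic expansions \eqref{A}--\eqref{Lambdas} plugged into the Fourier symbols \eqref{a21}--\eqref{a22}, which determines the ratio $|m^{22}_1(\Lambda_1)|/|m^{21}_1(\Lambda_1)|$ in the large-$R_1$ regime.

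The hard part, I expect, is not the bifurcation step itself --- which is essentially packaged by the previous lemmas --- but the geometric analysis of the flow associated with the bifurcating solutions: propagating the degenerate stagnation streamline of the laminar flow to a genuine critical layer of closed streamlines under the $s$-perturbation, and turning the formal amplitude ratio into a quantitative comparison, both require careful control of the perturbed stream function in a neighborhood of the degenerate laminar streamline, which is the main technical point that goes beyond the abstract bifurcation machinery.
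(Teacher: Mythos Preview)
Your verification of the Crandall--Rabinowitz hypotheses follows the paper's line, but the argument for a one-dimensional kernel is incomplete as stated. Strict monotonicity of $t\mapsto\Lambda_1(t)$ only excludes $\Lambda_1(2\pi k/L)=\Lambda_1$ for $k\geq 2$; the equation $D(k,\Lambda_1)=\phi(2\pi k/L,\Lambda_1)=0$ could still hold if $\Lambda_1$ happened to equal $\Lambda_2(2\pi k/L)$ or $\Lambda_3(2\pi k/L)$. It is precisely the first inequality in \eqref{add} that rules this out, and you should invoke it explicitly rather than only the second line.

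The genuine gap is in your plan for the amplitude comparison. You propose to determine the ratio $|m^{22}_1(\Lambda_1)|/|m^{21}_1(\Lambda_1)|$ by inserting the expansion of $\Lambda_1$ from \eqref{A}--\eqref{Lambdas} into the symbols \eqref{a21}--\eqref{a22}. This does not work: from \eqref{a22} one has $m^{22}_1(\Lambda_1)\sim -\gamma_2 d_2\, t/\sinh(td_2)$, which is exponentially small, while in \eqref{a21} the leading contributions cancel because $\Lambda_1-\gamma_2 d_2=\tfrac{\gamma_1-\gamma_2}{2t}+O(t^{-2})$, so that the polynomial expansions leave a residual $m^{21}_1(\Lambda_1)=O(1/t)$. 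Taken at face value this yields $|m^{22}_1|/|m^{21}_1|\to 0$, i.e.\ the \emph{opposite} conclusion. The point is that at the exact root $\Lambda_1$ the quantity $m^{21}_1(\Lambda_1)$ is in fact exponentially small, but the polynomial asymptotics of $\Lambda_1$ cannot detect this. The paper's proof sidesteps the cancellation by using the identity $D(1,\Lambda_1)=0$ to rewrite
\[
\frac{m^{22}_1(\Lambda_1)}{m^{21}_1(\Lambda_1)}=\frac{m^{12}_1(\Lambda_1)}{m^{11}_1(\Lambda_1)},
\]
and then computing the right-hand side from \eqref{a11}--\eqref{a12}: there the numerator grows like $-\Lambda_1^2\,t$ while the denominator tends to zero with the sign of $\gamma_2 d_2-\Lambda_1$ (known from Lemma~\ref{Lm}), so only the coarse information $\Lambda_1\to\gamma_2 d_2$ is required. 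This substitution via the dispersion relation is the essential trick; without it your asymptotic scheme produces the wrong answer.
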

\begin{proof}
 It remains only to show that the amplitude of the internal wave is much larger than that of the surface wave.
To this  end, we note that due to $D(1,\Lambda_1)=0,$ we have 
\begin{equation*}
 -\frac{m^{22}_1(\Lambda_1)}{m^{21}_1(\Lambda_1)} = -\frac{m^{12}_1(\Lambda_1)}{m^{11}_1(\Lambda_1)}\to_{L\to 0}{\rm sign\,}(\gamma_1-\gamma_2)\infty,
\end{equation*}
since
\begin{align*}
 \lim_{L\to 0} \frac{m^{12}_1(\Lambda_1)}{m^{11}_1(\Lambda_1)} 
 =&\lim_{t\to\infty}\frac{g+\gamma_2\Lambda_1(t)-\Lambda^2_1(t) \displaystyle\frac{t}{\tanh\left(td_2\right)}}{- \Lambda_1(t)(\gamma_2 d_2-\Lambda_1(t))\displaystyle \frac{t}{\sinh\left(td_2\right)}}={\rm sign\,}(\gamma_2-\gamma_1)\infty.
\end{align*}
\end{proof}

\begin{figure}[t2]
\includegraphics[width=2.2in, angle=270]{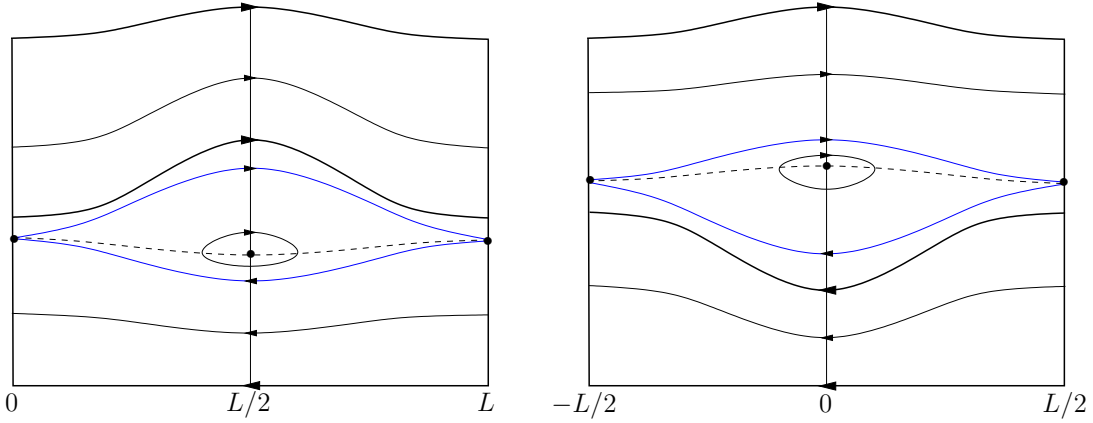}
\caption[Stream pattern]{\small This figure illustrates the streamlines in the moving frame for the solutions found in Theorem \ref{MT1} for $\gamma_1>\gamma_2>0$ (left) and  $\gamma_1<\gamma_2, \gamma_2>0$  (right), cf. Lemmas \ref{fig1_left}-\ref{fig1_right}.
The thick  streamlines represent the wave surface, the internal wave, and the flat bad, respectively.
  The blue streamlines are separatrices which bound the critical layer and the dashed line consists of points where the $y-$derivative of the stream function vanishes.
This line contains in both cases exactly three stagnation points: two located at $x=0$ and $x=L$, and a third one inside the critical layer at $x=L/2.$}\label{Fig1}
\end{figure}

\subsection{Existence of water flows  bifurcating from $\Lambda_2$}
For $t\to\infty$ we have that
\begin{itemize}
 \item $\Lambda_2(t)$ satisfies \eqref{top};
 \item  if $\gamma_1d_1+\gamma_2 d_2\leq 0,$ then $\Lambda_2(t)$ satisfies also \eqref{bed}.
\end{itemize}
Letting $\phi=\phi(t,\Lambda)$ be the function defined in the proof of Lemma \ref{Lm}, we note that for large $t$ we have
\begin{equation*} 
 \phi_{\Lambda}(t,\Lambda_2 (t))=(\Lambda_2(t)-\Lambda_1(t))(\Lambda_2(t)-\Lambda_3(t))<0.
\end{equation*}
Hence, $\Lambda_2$ is differentiable with respect to $t.$
Moreover, it follows from \eqref{A} that
$$t^2\phi_t(t,\Lambda_2(t))\to_{t\to\infty}-g\gamma_2d_2,$$
and therefore $\Lambda_2'(t)= -\phi_t(t,\Lambda_2(t))/\phi_{\Lambda}(t,\Lambda_2 (t))<0$ when $t$ is large.
Defining $$t_0:=\inf\{t>0\,:\, \text{$\Lambda_2'<0$ on $(t,\infty)$}\},$$
we see that $[[t_0,\infty)\ni t\mapsto\Lambda_2(t)\in(0,\infty)]$ is  decreasing.
In view of this property, we can choose $t_0>0$   large enough to guarantee  that 
\begin{equation}\label{add1}
\begin{aligned}
 &\sup_{[t_0,\infty)}\Lambda_2<\inf_{[t_0,\infty)}\Lambda_1,\\
 &D(0,\Lambda_2(t))\neq0\qquad\text{for all $t\geq t_0.$}
 \end{aligned}
\end{equation}
Then, we set   
\begin{equation}\label{L0a1}
 L_0:=2\pi/t_0,    
    \end{equation}
we fix $L\leq L_0,$ and define $\Lambda_2:=\Lambda_2(2\pi/L).$
Recalling that $\phi(2\pi/L, \Lambda_2)=0$, we obtain that
$D(1,\Lambda_2)=0.$
In fact,  the equation $D(\cdot, \Lambda_2)=0$ has  $k=1$ as the only integer solution, cf. \eqref{add1}.
Because  of $\Lambda_2\in(0,\gamma_2d_2)$,
$\partial_{(f,h)}\Phi_1(\Lambda_2,0)$ is a Fredholm operator with a one-dimensional kernel
\begin{equation*}
 {\rm Ker\,}\partial_{(f,h)}\Phi_1(\Lambda_2,0)={\rm span\, }\big\{ \big(m^{22}_1(\Lambda_2),-m^{21}_1(\Lambda_2)\big)\cos(2\pi x/L)\big\}.
\end{equation*}
Using the same arguments as in the proof of Lemma \ref{imagechar}, we see that 
\begin{equation*} 
 {\rm Im\,} \partial_{(f,h)}\Phi(\Lambda_2,0)=\Big\{(\xi,\eta)=\Big( \sum_{k\in\mathbb{N}}\xi_k \cos(R_kx),\sum_{k\in\mathbb{N}}\eta_k \cos(R_kx) \Big)\,:\, \gamma_1=\frac{m^{11}_1(\Lambda_2)}{m^{21}_1(\Lambda_2)}\eta_1\Big\}.
\end{equation*}
Moreover, the transversality condition   
$$\partial_{\Lambda  (f,h)}\Phi(\Lambda_2,0) \big[ \big(m^{22}_1(\Lambda_2),-m^{21}_1(\Lambda_2)\big)\cos(2\pi x/L)  \big]\notin{\rm Im\,} \partial_{(f,h)}\Phi(\Lambda_2,0)$$
reduces to showing that
$ D_\Lambda(1,\Lambda_2)=\phi_\Lambda (2\pi/L,\Lambda_2(2\pi/L))\neq0,$
relation which holds true.
 We conclude with the following result.
 
 \begin{figure}[ht]
\includegraphics[width=2.4in, angle=270]{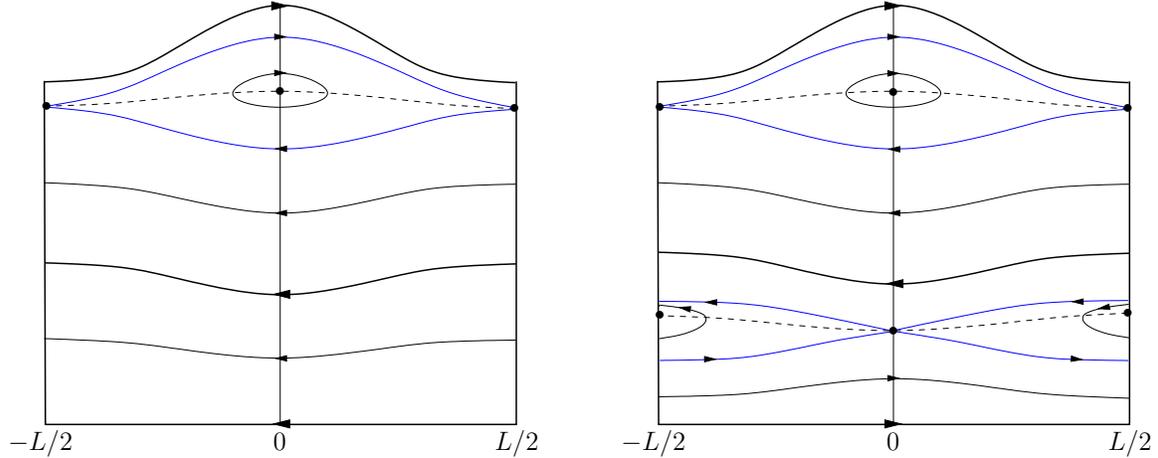}
\caption[Stream pattern]{\small This figure illustrates the streamlines in the moving frame for the solutions found in Theorem \ref{MT2} for $\gamma_1d_1+\gamma_2d_2>0$ (left)
and  $\gamma_1d_1+\gamma_2d_2\leq0$  (right), cf. Lemmas \ref{fig2_left}-\ref{fig2_right}.
}\label{Fig2}
\end{figure}

\begin{thm}[Bifurcation from $\Lambda_2$]\label{MT2}
Let $\gamma_2>0,$  $\gamma_1\neq\gamma_2$ and let $\alpha\in(0,1)$ be given.
Furthermore, let $L_0$ be the constant defined by  \eqref{L0a1}  and $L\leq L_0.$ 
Then, there exists a real-analytic curve $(\Lambda,(f,h)):(-\varepsilon,\varepsilon)\to (0,\infty)\times \mathcal{O}$ consisting only of solutions of the problem \eqref{NP}.
This curve contains exactly one trivial solution of \eqref{NP}, and for $s\to0$ we have that
\[\Lambda(s)=\Lambda_2+O(s),\qquad (f,h)(s)=s  \big(m^{22}_1(\Lambda_2),-m^{21}_1(\Lambda_2)\big)\cos(2\pi x/L)+O(s^2),\]
whereby $\Lambda_2:=\Lambda_2(2\pi/L).$
The flow determined by $(\Lambda(s),(f,h)(s)), s\in(-\varepsilon,\varepsilon),$ contains a critical layer consisting  of closed streamlines 
\begin{itemize}
 \item[$(i)$] in the layer adjacent to the wave surface if $\gamma_1d_1+\gamma_2d_2>0$;
 \item[$(ii)$] in each of the  layers if $\gamma_1d_1+\gamma_2d_2\leq0.$ 
\end{itemize}
The vortex in the top layer is located right beneath the wave surface. 
Moreover, the amplitude of the internal wave  between the two layers is much smaller than that of  the surface wave, cf. Figure \ref{Fig2}.
\end{thm}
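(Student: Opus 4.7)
The plan is to note that essentially all the hypotheses of the Crandall--Rabinowitz theorem have already been verified in the paragraphs immediately preceding the theorem statement, so the existence claim is almost immediate. In the discussion leading up to Theorem \ref{MT2} we have fixed $L\le L_0$, set $\Lambda_2:=\Lambda_2(2\pi/L)\in(0,\gamma_2d_2)$, checked by \eqref{add1} that $k=1$ is the unique positive integer solving $D(k,\Lambda_2)=0$, used Lemma \ref{L:FP} to obtain the Fredholm property with one-dimensional kernel spanned by $(m^{22}_1(\Lambda_2),-m^{21}_1(\Lambda_2))\cos(2\pi x/L)$, characterized the range as the codimension-one closed subspace $\{\gamma_1=m^{11}_1(\Lambda_2)/m^{21}_1(\Lambda_2)\cdot\eta_1\}$ and checked the transversality condition $D_\Lambda(1,\Lambda_2)=\phi_\Lambda(2\pi/L,\Lambda_2(2\pi/L))\ne 0$. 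Therefore Theorem \ref{CR} applied to $\Phi$ at $(\Lambda_2,0)$ produces the analytic bifurcation curve $(\Lambda(s),(f,h)(s))$ with the asserted expansion, and since $\Lambda_2>0$ the parameter stays in $(0,\infty)$ after shrinking $\varepsilon$.

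What actually needs work in the proof is the comparison of the amplitudes of the surface and internal waves. The leading-order ansatz $(f,h)(s)=s\bigl(m^{22}_1(\Lambda_2),-m^{21}_1(\Lambda_2)\bigr)\cos(2\pi x/L)+O(s^2)$ tells us that the ratio of the two amplitudes is, up to order $s$, equal to $m^{22}_1(\Lambda_2)/(-m^{21}_1(\Lambda_2))$. I would compute the $L\to 0$ (equivalently $t=2\pi/L\to\infty$) limit of this quotient, using the explicit formulas \eqref{a21}--\eqref{a22} and the fact, established by \eqref{limlambda1}, that $\Lambda_2(t)\to 0$. The numerator $-\Lambda_2 t/\sinh(td_2)$ decays to $0$ exponentially (both factors cooperate), while the denominator $\gamma_2-\gamma_1+(\Lambda_2-\gamma_2 d_2)(t/\tanh(td_1)+t/\tanh(td_2))$ behaves like $-2\gamma_2 d_2\, t$, since $\Lambda_2-\gamma_2 d_2\to -\gamma_2d_2<0$ and $t/\tanh(td_j)\sim t$. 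Hence the quotient tends to $0$, which gives the desired assertion that the internal wave is much smaller than the surface wave in amplitude when $L$ is small.

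The streamline description (critical layer adjacent to the wave surface if $\gamma_1d_1+\gamma_2d_2>0$; a critical layer in each rotational layer if $\gamma_1d_1+\gamma_2d_2\le 0$; and the vortex in the top layer being located just beneath the wave surface) is a qualitative statement about the flow determined by $(\Lambda(s),(f,h)(s))$ for small $|s|$. Here one uses that $\Lambda_2$ always satisfies \eqref{top}, while it additionally satisfies \eqref{bed} precisely when $\gamma_1d_1+\gamma_2d_2\le 0$, so the laminar flow at $s=0$ already contains a stagnation streamline in the appropriate layer(s). A perturbation argument on the stream function $\psi^s=(w_1(s),w_2(s))\circ\Phi^{-1}$ of the bifurcating solution, comparing it with the explicit quadratic laminar profile, then locates the zeros of $\partial_y\psi^s$ and shows that each such stagnation level is surrounded by closed streamlines; the position of the upper critical layer close to the crest $y=h(x)$ follows from the sign of $\Lambda_2$ and the curvature of $\psi_2^0$. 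This analysis is carried out in Lemmas \ref{fig2_left}--\ref{fig2_right}, to which the proof defers.

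The main obstacle I anticipate is not the existence part, which is essentially automatic from the preceding preparation, but rather the quantitative analysis of $m^{22}_1(\Lambda_2)/m^{21}_1(\Lambda_2)$: one must verify that $\Lambda_2(t)$ decays fast enough (via the asymptotics embedded in \eqref{Lambdas}, \eqref{rt} and \eqref{A}) that the numerator is genuinely small, while correctly identifying the leading behaviour of the denominator. After that, the critical-layer picture reduces to an implicit function theorem argument around the stagnation streamlines of the laminar flow, which is why it is isolated into the separate lemmas cited in the figure caption.
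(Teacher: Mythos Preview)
Your proposal is correct and follows essentially the same approach as the paper. The paper's proof is just two lines: it notes that only the amplitude comparison remains, and records that $-m^{21}_1(\Lambda_2)/m^{22}_1(\Lambda_2)\to -\infty$ as $L\to 0$; you compute the reciprocal ratio $m^{22}_1(\Lambda_2)/(-m^{21}_1(\Lambda_2))\to 0$ with more explicit asymptotics, and you correctly defer the critical-layer picture to Lemmas \ref{fig2_left}--\ref{fig2_right}, exactly as the paper does via the figure caption.
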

\begin{proof}
 It remains only to show that the amplitude of the surface wave is much larger than that of the internal wave.
This follows from \eqref{a21}-\eqref{a22}, as we have
\begin{equation*}
 -\frac{m^{21}_1(\Lambda_2)}{m^{22}_1(\Lambda_2)}\to_{L\to 0}-\infty.
\end{equation*}
\end{proof}

\subsection{Existence of water flows  bifurcating from $\Lambda_3$}
Since $0>\Lambda_3(t)\to_{t\to\infty}0$ we see that
 $\Lambda_3(t)$ satisfies \eqref{bed} provided that  $\gamma_1d_1+\gamma_2 d_2< 0.$ 
Because for large $t$ 
\begin{equation*} 
 \phi_{\Lambda}(t,\Lambda_3(t))=(\Lambda_3(t)-\Lambda_1(t))(\Lambda_3(t)-\Lambda_2(t))>0,
\end{equation*}
the function $\Lambda_3$ is differentiable with respect to $t.$
Since
$t^2\phi_t(t,\Lambda_3(t))\to_{t\to\infty}-g\gamma_2d_2,$
we conclude that  $\Lambda_3'(t)>0$ when $t$ is large.
Defining $t_0:=\inf\{t>0\,:\, \text{$\Lambda_3'>0$ on $(t,\infty)$}\},$
we see that $[[t_0,\infty)\ni t\mapsto\Lambda_3(t)\in(-\infty,0)]$ is increasing.
In view of this property, we can choose $t_0>0$   large enough to guarantee  that 
\begin{equation}\label{add2}
D(0,\Lambda_3(t))\neq0\qquad\text{for all $t\geq t_0.$}
\end{equation}
Let 
\begin{equation}\label{L0a2}
 L_0:=2\pi/t_0,    
    \end{equation}
choose $L\leq L_0,$ and define $\Lambda_3:=\Lambda_3(2\pi/L).$
Since $\phi(2\pi/L, \Lambda_3)=0$, we get that 
$k\in \mathbb{N}$ solves $D(k,\Lambda_3)=0$ if and only if $k=1.$
Moreover, since $\Lambda_3<0, $
$\partial_{(f,h)}\Phi_1(\Lambda_3,0)$ is a Fredholm operator with a one-dimensional kernel
\begin{equation*}
 {\rm Ker\,}\partial_{(f,h)}\Phi_1(\Lambda_3,0)={\rm span\, }\big\{ \big(m^{22}_1(\Lambda_3),-m^{21}_1(\Lambda_3)\big)\cos(2\pi x/L)\big\}.
\end{equation*}
As in Lemma \ref{imagechar}, we find that 
\begin{equation*} 
 {\rm Im\,} \partial_{(f,h)}\Phi(\Lambda_3,0)=\Big\{(\xi,\eta)=\Big( \sum_{k\in\mathbb{N}}\xi_k \cos(R_kx),\sum_{k\in\mathbb{N}}\eta_k \cos(R_kx) \Big)\,:\, \gamma_1=\frac{m^{11}_1(\Lambda_3)}{m^{21}_1(\Lambda_3)}\eta_1\Big\},
\end{equation*}
 the transversality condition   
$$\partial_{\Lambda  (f,h)}\Phi(\Lambda_3,0) \big[ \big(m^{22}_1(\Lambda_3),-m^{21}_1(\Lambda_3)\big)\cos(2\pi x/L)  \big]\notin{\rm Im\,} \partial_{(f,h)}\Phi(\Lambda_3,0)$$
being equivalent to
$  D_\Lambda(1,\Lambda_3)=\phi_\Lambda (2\pi/L,\Lambda_3(2\pi/L))\neq0.$
This shows that all the assumptions of Theorem \ref{CR} are satisfied.
Consequently, we have the following result.
 
\begin{thm}[Bifurcation from $\Lambda_3$]\label{MT3}
Let $\gamma_2>0,$    $\alpha\in(0,1)$, and assume  $\gamma_1d_1+\gamma_2d_2<0$.
Furthermore, let $L_0$ be the constant defined by  \eqref{L0a2}  and $L\leq L_0.$ 
Then, there exists a real-analytic curve $(\Lambda,(f,h)):(-\varepsilon,\varepsilon)\to (0,\infty)\times \mathcal{O}$ consisting only of solutions of the problem \eqref{NP}.
This curve contains exactly one trivial solution of \eqref{NP}, and for $s\to0$ we have that
\[\Lambda(s)=\Lambda_3+O(s),\qquad (f,h)(s)=s  \big(m^{22}_1(\Lambda_3),-m^{21}_1(\Lambda_3)\big)\cos(2\pi x/L)+O(s^2),\]
whereby $\Lambda_3:=\Lambda_3(2\pi/L).$
The flow determined by $(\Lambda(s),(f,h)(s)), s\in(-\varepsilon,\varepsilon),$ contains a critical layer consisting  of closed streamlines 
in the layer adjacent to the bed.
Moreover, the amplitude of the internal wave  between the two layers is much smaller than that of  the surface wave, cf. Figure \ref{Fig3}.
\end{thm}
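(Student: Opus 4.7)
The bifurcation assertion is a direct application of the Crandall--Rabinowitz theorem (Theorem \ref{CR}) at $(\Lambda_3,0)$. The entire preceding discussion already verifies all three hypotheses: condition (a) holds because every $(\Lambda,0)$ is a laminar solution of \eqref{NP}; condition (b) is the content of Lemma \ref{L:FP}$(ii)$ combined with $\Lambda_3\notin\{0,\gamma_2d_2\}$ and with the fact that \eqref{add2} forces $k=1$ to be the unique integer solution of $D(\cdot,\Lambda_3)=0$; and condition (c) has been reduced to $D_\Lambda(1,\Lambda_3)=\phi_\Lambda(2\pi/L,\Lambda_3(2\pi/L))\neq 0$, which holds because $\phi_\Lambda(t,\Lambda_3(t))=(\Lambda_3(t)-\Lambda_1(t))(\Lambda_3(t)-\Lambda_2(t))>0$ for $t$ large. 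Thus Theorem \ref{CR} yields the real-analytic curve with the announced leading-order expansion.

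What remains is the amplitude comparison and the description of the streamline pattern. For the first, the ratio of the internal- to the surface-wave amplitude at order $s$ is $|m^{22}_1(\Lambda_3)/m^{21}_1(\Lambda_3)|$, and I would show that this tends to $0$ as $L\to 0$. Writing $R_1=2\pi/L\to\infty$ and using $\Lambda_3=\Lambda_3(2\pi/L)\to 0^-$ from \eqref{limlambda1}, formula \eqref{a22} gives $m^{22}_1(\Lambda_3)=-\Lambda_3 R_1/\sinh(R_1d_2)\to 0$, since both factors $\Lambda_3$ and $R_1/\sinh(R_1d_2)$ vanish in the limit, while \eqref{a21} yields
\[
m^{21}_1(\Lambda_3)=\gamma_2-\gamma_1+(\Lambda_3-\gamma_2d_2)\Bigl[\tfrac{R_1}{\tanh(R_1d_1)}+\tfrac{R_1}{\tanh(R_1d_2)}\Bigr]\sim -2\gamma_2d_2\,R_1\to -\infty,
\]
so the ratio indeed tends to zero, exactly as in the analogous step of Theorem \ref{MT2}.

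For the streamline pattern, I would argue in parallel with the lemmas underlying Theorems \ref{MT1}--\ref{MT2}. The structural input is that, for $t$ large, $\Lambda_3(t)\in(\gamma_1d_1+\gamma_2d_2,0)$, so $\partial_y\psi_1^0$ changes sign exactly once inside $[-d,-d_2]$ while $\partial_y\psi_2^0$ has no zero on $[-d_2,0]$; stagnation of the laminar flow therefore occurs strictly in the interior of the bottom layer. Applying the implicit function theorem to the real-analytic stream function provided by Lemma \ref{L:1} persists a smooth critical curve for the perturbed flow close to the horizontal critical line of the laminar flow, and a local phase-plane analysis near that curve produces the closed streamlines forming the announced critical layer in $\Omega(f)$. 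The main obstacle I expect is precisely this last geometric step --- showing that the perturbed critical curve closes up into bounded orbits rather than connecting to the bed or to the interface $y=-d_2+f(x)$ --- which is handled, as in the analogous lemmas invoked for Figures \ref{Fig1}--\ref{Fig2}, by exploiting the evenness of $f,h$ and the smallness of the bifurcation parameter $s$.
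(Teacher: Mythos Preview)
Your proposal is correct and follows essentially the same approach as the paper. The paper's proof of the theorem itself consists only of the amplitude computation $-m^{21}_1(\Lambda_3)/m^{22}_1(\Lambda_3)\to\infty$ (you compute the reciprocal ratio and show it vanishes, which is equivalent), with the Crandall--Rabinowitz verification handled by the discussion preceding the theorem statement and the streamline pattern deferred to Lemma~\ref{fig3_left} in the Appendix, whose argument is along the same lines as your sketch.
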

\begin{proof}
The claim concerning the amplitude of the surface and internal waves follows from \eqref{a21}-\eqref{a22}, as we have
\begin{equation*}
 -\frac{m^{21}_1(\Lambda_2)}{m^{22}_1(\Lambda_2)}\to_{L\to 0}\infty.
\end{equation*}
\end{proof}

 \begin{figure}[ht]
\includegraphics[width=2.4in, angle=270]{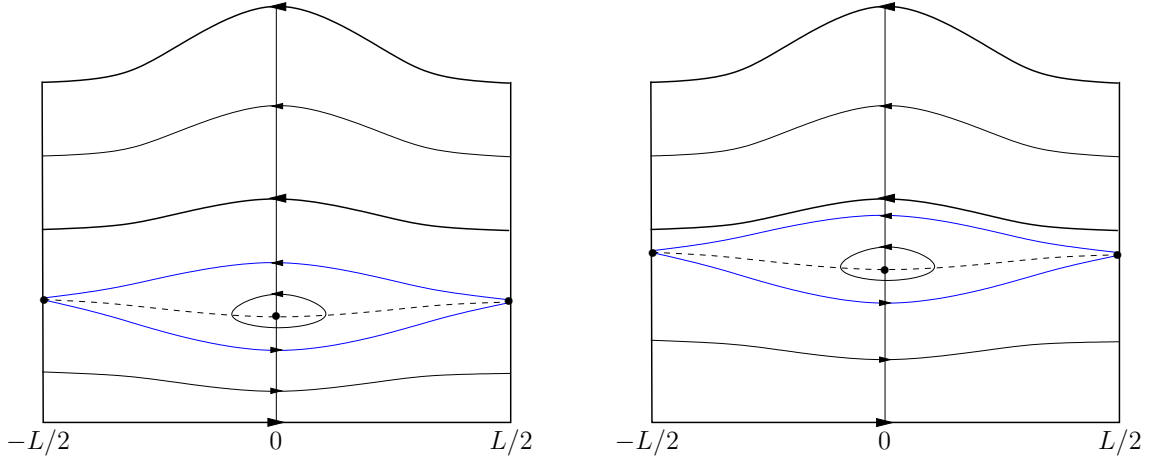}
\caption[Stream pattern]{\small This figure illustrates the streamlines in the moving frame for the solutions found in Theorem \ref{MT3} (left) and Theorems \ref{MT4} and \ref{MT5} (right), cf. Lemmas \ref{fig3_left}-\ref{fig3_right}.}\label{Fig3}
\end{figure}

\section{Analysis of the dispersion relation: the case $\gamma_2=0$ and $\gamma_1<0$} \label{Sec:4}
Because of $\gamma_2=0,$ the inequality  \eqref{top} reduces to $\Lambda=0,$ situation when $\partial_{(f,h)}\Phi(\Lambda,0)$ is not
even a Fredholm operator, cf. Lemma \ref{L:FP}.
For this reason we consider the bifurcation problem for \eqref{NP} just for values of $\Lambda$ which satisfy \eqref{bed}.
Hence, the flows that we construct will have stagnation points only in the bottom layer.

With the notation from Section \ref{Sec:3}, we determine for the depressed cubic equation \eqref{redcu} that
\[
D=\Big(\frac{p}{3}\Big)^3+\Big(\frac{q}{2}\Big)^2=-\frac{g^3}{27}t^{-3}+O\big(t^{-4}\big)<0 \qquad\text{for $t\to\infty$},
\]
hence \eqref{CF} has again three positive roots.
They are given by the relation $z=r\cos(\beta),$
whereby 
\begin{align}\label{rt2} 
r=\sqrt{\frac{-4p}{3}}=2 \sqrt{\frac{g}{3}}t^{-1/2}+\frac{\gamma_1^2 }{36}\sqrt{\frac{3}{g}}t^{-3/2}+O\big(t^{-5/2}\big)
\end{align}
and $\beta$ is one of   the solutions of 
\begin{equation*}
 \cos(3\beta)=-\frac{q}{2}\sqrt{-\frac{27}{p^3}}=-\gamma_1 \sqrt{\frac{3}{g}}t^{-1/2}+\frac{10\gamma_1^3 }{72g}\sqrt{\frac{3}{g}}t^{-3/2}+O\big(t^{-5/2}\big).
\end{equation*}
Setting $\beta:=3^{-1}\arccos\big(-(q/2)\sqrt{-27/p^3}\big)$, we see that $\beta(t) \to_{t\to\infty} \pi/6$   and the roots of \eqref{CF} are
\begin{equation}\label{Lambdas1}
 \begin{aligned}
& {\Lambda_1 =r\cos(\beta)-\frac{A}{3}},\\
&\Lambda_2 =r\cos\Big(\beta-\frac{2\pi}{3}\Big){-\frac{A}{3}} =-r\cos\Big(\beta+\frac{\pi}{3}\Big)-\frac{A}{3},\\
&\Lambda_3=r\cos\Big(\beta+\frac{2\pi}{3}\Big){-\frac{A}{3}}  =-r\cos\Big(\beta-\frac{\pi}{3}\Big)-\frac{A}{3}. 
 \end{aligned}
\end{equation}
It follows now easily from \eqref{A}, \eqref{rt2}, and \eqref{Lambdas1} that  $\Lambda_i\to_{t\to\infty}0$ for $i\in\{1,2,3\}$ and that $\Lambda_3<\Lambda_2<0<\Lambda_1$ for $t\to\infty.$
Thus, we can find $t_0>0$ such that 
\begin{equation}\label{Eq1}
\begin{aligned}
 &\gamma_1d_1<\Lambda_3<\Lambda_2<0<\Lambda_1\qquad\text{on $[t_0,\infty)$},\\
&D(0,\Lambda_i(t))\neq0 \qquad\text{for all $t\geq t_0$, $i=2,3$}.
 \end{aligned}
\end{equation}
In view of $\gamma_1<0,$ the relation \eqref{bed} is equivalent to $\Lambda\in(\gamma_1d_1,0)$ and therefore just flows bifurcating from negative $\Lambda$ may contain stagnation points.
For this reason, we only investigate  in the following the functions $\Lambda_2 $ and $\Lambda_3$.
\begin{lem}\label{L:a}
 There exists $t_0>0$   such that \eqref{Eq1} holds and $\Lambda_i:[t_0,\infty)\to(-\infty,0), i=2,3,$ are both increasing functions.
\end{lem}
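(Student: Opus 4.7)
The plan is to proceed exactly as in the monotonicity arguments for $\Lambda_2$ and $\Lambda_3$ in Section \ref{Sec:3}, reusing the auxiliary function $\phi(t,\Lambda):=\Lambda^3+A(t)\Lambda^2+B(t)\Lambda+C(t)$ from the proof of Lemma \ref{Lm}. Each $\Lambda_i(t)$ satisfies $\phi(t,\Lambda_i(t))=0$. The first task is to confirm that the three roots are simple for $t$ large, which follows from \eqref{rt2} and \eqref{Lambdas1}: the limiting angle $\beta(t)\to\pi/6$ yields $\Lambda_1\sim r\sqrt{3}/2\sim\sqrt{g/t}$ and $\Lambda_3\sim -r\sqrt{3}/2\sim-\sqrt{g/t}$, whereas $\cos(\beta+\pi/3)\to\cos(\pi/2)=0$ forces $\Lambda_2$ to be of smaller order. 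By Vieta, $\Lambda_1\Lambda_2\Lambda_3=-C\sim-g\gamma_1/(2t^2)$, so $\Lambda_2\sim\gamma_1/(2t)$, which is negative since $\gamma_1<0$. In particular $\Lambda_1-\Lambda_2$ and $\Lambda_2-\Lambda_3$ are both of order $t^{-1/2}$, so all three roots are strictly separated for $t$ large enough.

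Consequently, the implicit function theorem applies to both $\Lambda_2$ and $\Lambda_3$, yielding differentiability on a neighborhood of $+\infty$ with
\[
\Lambda_i'(t)=-\frac{\phi_t(t,\Lambda_i(t))}{\phi_\Lambda(t,\Lambda_i(t))},\qquad i=2,3.
\]
The signs of the denominators are immediate from the factorization $\phi_\Lambda(t,\Lambda_i)=\prod_{j\neq i}(\Lambda_i-\Lambda_j)$: $\phi_\Lambda(t,\Lambda_2)<0$ and $\phi_\Lambda(t,\Lambda_3)>0$. For the numerators I would expand using \eqref{A} with $\gamma_2=0$, which gives $A'=\gamma_1/(2t^2)+o(t^{-3})$, $B'=g/t^2+o(t^{-4})$, and $C'=-g\gamma_1/t^3+o(t^{-4})$, and combine with the asymptotics of $\Lambda_i$. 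For $\Lambda_2\sim\gamma_1/(2t)$ the terms $\Lambda_2^2 A'=O(t^{-4})$ are negligible compared with $\Lambda_2 B'+C'\sim g\gamma_1/(2t^3)-g\gamma_1/t^3=-g\gamma_1/(2t^3)$, hence $\phi_t(t,\Lambda_2(t))>0$ for large $t$ (using $\gamma_1<0$), so $\Lambda_2'(t)>0$. For $\Lambda_3\sim-\sqrt{g/t}$, the dominant contribution is $\Lambda_3 B'\sim-g^{3/2}t^{-5/2}<0$, which outweighs both $\Lambda_3^2 A'=O(t^{-3})$ and $C'=O(t^{-3})$, so $\phi_t(t,\Lambda_3(t))<0$ and therefore $\Lambda_3'(t)>0$ as well.

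Finally, I would define
\[
t_0:=\inf\{t>0:\text{$\Lambda_2'>0$ and $\Lambda_3'>0$ on $(t,\infty)$}\},
\]
and enlarge $t_0$ if necessary so that the separation inequalities $\gamma_1 d_1<\Lambda_3<\Lambda_2<0<\Lambda_1$ hold on $[t_0,\infty)$ (which is possible since $\Lambda_i\to 0$ and $\gamma_1 d_1<0$), and so that the non-resonance condition $D(0,\Lambda_i(t))\neq 0$ for $i=2,3$ is satisfied on $[t_0,\infty)$ (for instance by observing that $D(0,\cdot)$ is a polynomial which does not vanish identically, so its zeros are isolated, and the $\Lambda_i$ are confined to a small neighborhood of $0$ for $t\geq t_0$). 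This yields \eqref{Eq1} together with the asserted monotonicity. The only delicate point in the argument is the expansion of $\phi_t(t,\Lambda_2(t))$, where the two leading candidates $\Lambda_2 B'$ and $C'$ are of the same order $t^{-3}$ and only their difference controls the sign; I expect this cancellation-then-sign check to be the main technical step, while the rest is routine bookkeeping.
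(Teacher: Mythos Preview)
Your proposal is correct and follows the same overall strategy as the paper: differentiate the implicit relation $\phi(t,\Lambda_i(t))=0$, read off the sign of $\phi_\Lambda$ from the factorisation $\phi_\Lambda(t,\Lambda_i)=\prod_{j\neq i}(\Lambda_i-\Lambda_j)$, and determine the sign of $\phi_t$ from the asymptotics of $A',B',C'$ combined with the leading behaviour of $\Lambda_i$. For $\Lambda_3$ your argument and the paper's are essentially identical.

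The one genuine difference is how the leading term of $\Lambda_2$ is obtained. The paper goes through a careful trigonometric expansion of $\cos\beta$ and $\sin\beta$ (because $\cos(\beta+\pi/3)\to 0$ kills the naive leading term) and arrives at $\Lambda_2(t)=\tfrac{5\gamma_1}{6}t^{-1}+O(t^{-3/2})$. You instead use Vieta, $\Lambda_1\Lambda_2\Lambda_3=-C$, together with $\Lambda_1\Lambda_3\sim -g/t$, to get $\Lambda_2\sim\gamma_1/(2t)$. Your route is more elementary and in fact more reliable: substituting $\Lambda=ct^{-1}$ into the reduced cubic $\Lambda^3-\tfrac{\gamma_1}{2t}\Lambda^2-\tfrac{g}{t}\Lambda+\tfrac{g\gamma_1}{2t^2}=0$ forces $c=\gamma_1/2$, confirming your coefficient (the discrepancy traces back to a slip in the paper's expansion of $q/2$ and hence of $\cos(3\beta)$). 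Fortunately the two values lead to the same sign for $t^3\phi_t(t,\Lambda_2(t))$, so the monotonicity conclusion is unaffected. Your treatment of the auxiliary conditions in \eqref{Eq1} is adequate; since $\phi(0,0)=g\gamma_1 d_1 d_2\neq 0$ and $\Lambda_i\to 0$, the non-resonance condition $D(0,\Lambda_i(t))\neq 0$ indeed holds for $t$ large.
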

\begin{proof}
 Note first that $\phi_\Lambda(t,\Lambda_2(t))<0$ and $\phi_\Lambda(t,\Lambda_3(t))>0$ for $t\geq t_0.$
 Hence, $\Lambda_i, i=2,3,$ are differentiable on $[t_0,\infty).$
 Moreover, it is easy to see from \eqref{rt2} and \eqref{A} that
 \[\lim_{t\to\infty}t^{5/2}\phi_t(t,\Lambda_3(t))=\lim_{t\to\infty}t^{5/2}\Lambda_3(t)B'(t)=-g^{3/2}.\]
Hence, we may chose $t_0$ large to ensure  the assertion for the mapping  $\Lambda_3.$
 
 This argument does no longer work for  $\Lambda_2$ as $\cos(\beta+\pi/3)\to_{t\to\infty}0.$
 Hence, we have to determine an expansion for $\cos(\beta+\pi/3).$
 Let 
\[z_0:=\frac{\sqrt{3}}{2}-\frac{\gamma_1}{6}\sqrt{\frac{3}{g}}t^{-1/2},\]
and observe that
\begin{align*}
|\cos(\beta)-z_0||\cos^2(\beta)+z_0\cos(\beta)+z_0^2|=&
 |4\cos^3(\beta)-3\cos(\beta)-4z_0^3+3z_0|\\
 &=\Big|\cos(3\beta)+\gamma_1 \sqrt{\frac{3}{g}}t^{-1/2}\Big|+O(t^{-1})=O(t^{-1}).
\end{align*}
Therewith ${\cos}(\beta)=z_0+O(t^{-1})$ for $t\to\infty.$
It is now easy to see that for $t\to\infty $ we have
\begin{align*}
 \sin(\beta)=&\frac{1}{2}+\frac{\gamma_1}{2\sqrt{g}}t^{-1/2}+O(t^{-1}),
 \end{align*}
 from which it follows  easily that
 \begin{align}
  \Lambda_2(t):= \frac{5\gamma_1}{6}t^{-1}+O(t^{-3/2})\qquad\text{and}\qquad\Lambda_3(t):=-\sqrt{g}t^{-1/2}-\frac{\gamma_1}{6}t^{-1}+O(t^{-3/2}).\label{LL3} 
\end{align}
 The expansion \eqref{LL3} combined with \eqref{A} shows  that
 \[\lim_{t\to\infty}t^3 \phi_t(t,\Lambda_2(t))=\lim_{t\to\infty} t^3\big(\Lambda_2(t)B'(t)+C'(t)\big)=-g\gamma_1/6>0,\]
 relation which proves the claim.
\end{proof}

\begin{thm}[Bifurcation from $\Lambda_3$]\label{MT4}
Let $\gamma_2=0,$ $\gamma_1<0,$ and $\alpha\in(0,1)$.
Furthermore, let $L_0:=2\pi/t_0$  and $L\leq L_0.$ 
Then, there exists a real-analytic curve $(\Lambda,(f,h)):(-\varepsilon,\varepsilon)\to (0,\infty)\times \mathcal{O}$ consisting only of solutions of the problem \eqref{NP}.
This curve contains exactly one trivial solution of \eqref{NP}, and for $s\to0$ we have that
\[\Lambda(s)=\Lambda_3+O(s),\qquad (f,h)(s)=s  \big(m^{22}_1(\Lambda_3),-m^{21}_1(\Lambda_3)\big)\cos(2\pi x/L)+O(s^2),\]
whereby $\Lambda_3:=\Lambda_3(2\pi/L).$
The flow determined by $(\Lambda(s),(f,h)(s)), s\in(-\varepsilon,\varepsilon),$ contains a critical layer consisting  of closed streamlines 
in the layer adjacent to the bed just below the internal wave.
Moreover, the amplitude of the internal wave  between the two layers is much smaller than that of  the surface wave, cf. Figure \ref{Fig3}.
\end{thm}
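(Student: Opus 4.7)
The plan is to follow the template established in the proofs of Theorems \ref{MT1}--\ref{MT3} and adapt the asymptotic inputs to the degenerate regime $\gamma_2=0$ supplied by Lemma \ref{L:a}. First I would verify the hypotheses of the Crandall--Rabinowitz Theorem \ref{CR} at the point $(\Lambda_3,0)$, where $\Lambda_3:=\Lambda_3(2\pi/L)$ and $L\leq L_0=2\pi/t_0$. Since $\Lambda_3<0$ while $\gamma_2 d_2=0$, we have $\Lambda_3\notin\{0,\gamma_2d_2\}$, so Lemma \ref{L:FP}(ii) gives that $\partial_{(f,h)}\Phi(\Lambda_3,0)$ is Fredholm of index zero. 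Moreover, $\Lambda_3\in(\gamma_1d_1,0)$ by \eqref{Eq1}, so \eqref{bed} holds and stagnation occurs in the bottom layer.

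Next, I would show that $k=1$ is the unique nonnegative integer root of $D(\cdot,\Lambda_3)=0$, which yields the one-dimensional kernel
\[
{\rm Ker\,}\partial_{(f,h)}\Phi(\Lambda_3,0)={\rm span\, }\big\{\big(m^{22}_1(\Lambda_3),-m^{21}_1(\Lambda_3)\big)\cos(2\pi x/L)\big\}.
\]
For $k\geq 2$ this uses the strict monotonicity of $\Lambda_2,\Lambda_3$ on $[t_0,\infty)$ from Lemma \ref{L:a}: the three real roots of $\phi(R_k,\cdot)$ are $\Lambda_1(R_k)>0>\Lambda_2(R_k)>\Lambda_3(R_k)$ and monotonicity gives $\Lambda_3(R_k)>\Lambda_3(2\pi/L)=\Lambda_3$, so $\Lambda_3$ coincides with none of them. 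The case $k=0$ is excluded by the second condition in \eqref{Eq1}. The characterization
\[
{\rm Im\,}\partial_{(f,h)}\Phi(\Lambda_3,0)=\Big\{(\xi,\eta)\,:\,\gamma_1=\tfrac{m^{11}_1(\Lambda_3)}{m^{21}_1(\Lambda_3)}\eta_1\Big\}
\]
is obtained verbatim as in Lemma \ref{imagechar}, and the transversality condition reduces, exactly as in Lemma \ref{L:trans}, to $D_\Lambda(1,\Lambda_3)=\phi_\Lambda(2\pi/L,\Lambda_3)\neq 0$, which is positive by the sign computation that proves the monotonicity of $\Lambda_3$ in Lemma \ref{L:a}. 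Theorem \ref{CR} then yields the bifurcating real-analytic curve.

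For the comparison of amplitudes I would use the asymptotic expansion
\[
\Lambda_3(t)=-\sqrt{g}\,t^{-1/2}+O(t^{-1}),\qquad t=2\pi/L\to\infty,
\]
from \eqref{LL3}, together with \eqref{a21}--\eqref{a22} specialised at $\gamma_2=0$. A direct computation gives
\[
\frac{m^{22}_1(\Lambda_3)}{m^{21}_1(\Lambda_3)}
=\frac{-\Lambda_3\,t/\sinh(t d_2)}{-\gamma_1+\Lambda_3\bigl[t/\tanh(t d_1)+t/\tanh(t d_2)\bigr]}\longrightarrow 0
\quad\text{as }L\to 0,
\]
since the numerator decays exponentially whereas the denominator grows like $2\Lambda_3 t\sim -2\sqrt{g}\,t^{1/2}\to-\infty$. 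This yields the assertion that the internal wave has amplitude much smaller than the surface wave. The remaining qualitative assertion on the existence and location of the critical layer (closed streamlines just beneath the internal interface in the bottom layer) is the content of the streamline lemmas already invoked for Figure \ref{Fig3} and reduces to analyzing where $\partial_y\psi$ vanishes for the bifurcating solution, which near $s=0$ is controlled by the laminar flow $\psi_1^0$ with $\Lambda=\Lambda_3$.

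The main obstacle here is not the bifurcation machinery, which is template, but rather controlling the signs and sizes of the multipliers $m^{ij}_1(\Lambda_3)$ in the degenerate limit $\gamma_2=0$ where $\Lambda_3\to 0$. Lemma \ref{L:a} provides exactly the precision needed; without the sharper expansion $\Lambda_3\sim -\sqrt{g}\,t^{-1/2}$ the denominator in the amplitude ratio could not be shown to blow up, and the transversality could not be reduced to the sign computation already performed in the proof of that lemma.
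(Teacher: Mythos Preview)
Your proposal is correct and follows essentially the same route as the paper: verify Fredholmness via Lemma \ref{L:FP}, use Lemma \ref{L:a} and \eqref{Eq1} to show $k=1$ is the only integer with $D(k,\Lambda_3)=0$, reduce transversality to $\phi_\Lambda(2\pi/L,\Lambda_3)\neq 0$, apply Theorem \ref{CR}, and then deduce the amplitude comparison from \eqref{a21}--\eqref{a22} and \eqref{LL3}. Your $k\geq 2$ argument (noting $\Lambda_3<\Lambda_3(R_k)$ already forces $\Lambda_3$ below \emph{all} roots of $\phi(R_k,\cdot)$) is in fact a touch cleaner than the paper's, which separately rules out $\Lambda_3=\Lambda_2(kt)$ by contradiction.
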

\begin{proof}
Because $t:=2\pi/L\geq t_0,$ we know from \eqref{Eq1} and Lemma \ref{L:FP} that $\partial_{(f,h)}\Phi_1(\Lambda_3,0)$ is a Fredholm operator. 
To determine its kernel we need to solve $D(k,\Lambda_3)=\phi(kt,\Lambda_3)=0.$
As $\Lambda_3=\Lambda_3(t),$ we see that $D(1,\Lambda_3)=0$, while \eqref{Eq1} ensures that $D(0,\Lambda_3)=0.$ 
Recalling Lemma \ref{L:a}, we see that $D(k,\Lambda_3)\neq 0$ for all $k\geq 2.$
Indeed, for $k\geq 2,$ $\Lambda_3(kt)>\Lambda_3$, and if $\Lambda_3=\Lambda_2(kt)$, then  $\Lambda_3=\Lambda_2(kt)>\Lambda_2(t)>\Lambda_3,$
a contradiction.
Hence,
$\partial_{(f,h)}\Phi_1(\Lambda_3,0)$ is a Fredholm operator with a one-dimensional kernel
\begin{equation*}
 {\rm Ker\,}\partial_{(f,h)}\Phi_1(\Lambda_3,0)={\rm span\, }\big\{ \big(m^{22}_1(\Lambda_3),-m^{21}_1(\Lambda_3)\big)\cos(2\pi x/L)\big\}.
\end{equation*}
Similarly as before we have
\begin{equation*} 
 {\rm Im\,} \partial_{(f,h)}\Phi(\Lambda_3,0)=\Big\{(\xi,\eta)=\Big( \sum_{k\in\mathbb{N}}\xi_k \cos(R_kx),\sum_{k\in\mathbb{N}}\eta_k \cos(R_kx) \Big)\,:\, \gamma_1=\frac{m^{11}_1(\Lambda_3)}{m^{21}_1(\Lambda_3)}\eta_1\Big\},
\end{equation*}
 and one can verify that the transversality condition   
$$\partial_{\Lambda  (f,h)}\Phi(\Lambda_3,0) \big[ \big(m^{22}_1(\Lambda_3),-m^{21}_1(\Lambda_3)\big)\cos(2\pi x/L)  \big]\notin{\rm Im\,} \partial_{(f,h)}\Phi(\Lambda_3,0)$$
is also satisfied. 
We are thus in the position of applying Theorem \ref{CR}.
To finish the proof, we infer from \eqref{a21}-\eqref{a22} and \eqref{LL3}, that
\begin{equation*}
 -\frac{m^{21}_1(\Lambda_3)}{m^{22}_1(\Lambda_3)}\to_{L\to 0}\infty.
\end{equation*}
\end{proof}

When considering bifurcation from $\Lambda_2$ the situation is more complicated because the derivative $\partial_{(f,h)}\Phi_1(\Lambda_2,0)$ {may}
possess a two-dimensional kernel if $\Lambda_3(2\pi k/L)= \Lambda_2(2\pi/L)$ for some $L\geq L_0$ and some integer $k\geq2.$
 When this happens, the integer $k$ is unique, cf. Lemma \ref{L:a}, so that we can conclude the existence of a curve of bifurcating  solutions from Theorem \ref{MT4}.
 When $\Lambda_3(2\pi k/L)\neq\Lambda_2(2\pi/L)$ for  all $k\geq2,$ we can apply again Theorem \ref{CR}.
\begin{thm}[Bifurcation from $\Lambda_2$]\label{MT5}
Let $\gamma_2=0,$ $\gamma_1<0,$ and let $\alpha\in(0,1)$.
Furthermore, let $L_0:=2\pi/t_0$  and $L\leq L_0$.
\begin{itemize}
 \item [$(i)$] Assume that $\Lambda_3(2\pi k/L)\neq \Lambda_2(2\pi/L)$ for all integers $k\geq 2.$ 
Then, there exists a real-analytic curve $(\Lambda,(f,h)):(-\varepsilon,\varepsilon)\to (0,\infty)\times \mathcal{O}$ consisting only of solutions of the problem \eqref{NP}.
This curve contains exactly one trivial solution of \eqref{NP}, and for $s\to0$ we have that
\[\Lambda(s)=\Lambda_2+O(s),\qquad (f,h)(s)=s  \big(m^{22}_1(\Lambda_2),-m^{21}_1(\Lambda_2)\big)\cos(2\pi x/L)+O(s^2),\]
whereby $\Lambda_2:=\Lambda_2(2\pi/L).$
\item [$(ii)$] Assume that $\Lambda_3(2\pi k/L)= \Lambda_2$ for some integer $k\geq 2.$
Then the assertion of Theorem \ref{MT4} holds true, but with $L$ replaced by $L/k.$
\end{itemize}
The flow determined by $(\Lambda(s),(f,h)(s)), s\in(-\varepsilon,\varepsilon),$ contains a critical layer consisting  of closed streamlines 
in the layer adjacent to the bed just beneath the internal wave.
Moreover, the amplitude of the internal wave  between the two layers is much smaller than that of  the surface wave, cf. Figure \ref{Fig3}.
\end{thm}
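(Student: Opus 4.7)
The plan is to mimic the Crandall--Rabinowitz framework used in Theorem \ref{MT4}, splitting according to whether the kernel of $\partial_{(f,h)}\Phi(\Lambda_*,0)$ at $\Lambda_*:=\Lambda_2(2\pi/L)$ is one--dimensional (case $(i)$) or whether it a priori threatens to be two--dimensional (case $(ii)$), and in the latter case to bypass the difficulty by passing to the shorter period $\tilde L:=L/k$ and invoking Theorem \ref{MT4} directly.

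For case $(i)$, since $\Lambda_*<0=\gamma_2d_2$ Lemma \ref{L:FP} yields that $\partial_{(f,h)}\Phi(\Lambda_*,0)$ is Fredholm of index zero, and its kernel is spanned by those cosine modes $\cos(R_k x)$ for which $D(k,\Lambda_*)=\phi(2\pi k/L,\Lambda_*)=0$. Since $\phi(2\pi/L,\Lambda_*)=0$, the mode $k=1$ is in the kernel. The first main step is to show that no $k\geq 2$ lies in the kernel. If $D(k,\Lambda_*)=0$ for some such $k$, then $\Lambda_*\in\{\Lambda_1(2\pi k/L),\Lambda_2(2\pi k/L),\Lambda_3(2\pi k/L)\}$; the first option is ruled out by $\Lambda_1>0>\Lambda_*$ on $[t_0,\infty)$, the second by the strict monotonicity of $\Lambda_2$ from Lemma \ref{L:a} (which gives $\Lambda_2(2\pi k/L)>\Lambda_2(2\pi/L)=\Lambda_*$), and the third is precisely excluded by the hypothesis of $(i)$. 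The kernel is then the one--dimensional span described in the statement, the image characterization is obtained by the argument of Lemma \ref{imagechar} applied at $\Lambda_*$, and the transversality condition reduces to $D_\Lambda(1,\Lambda_*)=\phi_\Lambda(2\pi/L,\Lambda_*)\neq 0$, which is the sign information already recorded in Lemma \ref{L:a}. An application of Theorem \ref{CR} then yields the bifurcation curve.

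For the amplitude claim in case $(i)$, I would use the asymptotic $\Lambda_2(t)=5\gamma_1/(6t)+O(t^{-3/2})$ from \eqref{LL3}. Plugging this into \eqref{a21} and \eqref{a22} with $\gamma_2=0$ and $t=2\pi/L\to\infty$, the numerator $m^{22}_1(\Lambda_*)=-\Lambda_* t/\sinh(td_2)$ decays exponentially, while $m^{21}_1(\Lambda_*)=-\gamma_1+\Lambda_*[t/\tanh(td_1)+t/\tanh(td_2)]\to 2\gamma_1/3\neq 0$. Hence $|m^{22}_1(\Lambda_*)/m^{21}_1(\Lambda_*)|\to 0$ as $L\to 0$, which is exactly the statement that the internal wave amplitude is much smaller than the surface one. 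The location of the critical layer (in the bottom layer just below the internal wave) is read off from $\Lambda_*<0$ together with \eqref{bed}, arguing as in the lemmas referenced by Figure \ref{Fig3}.

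Case $(ii)$ is where the main obstacle sits: at $\Lambda_*$ both $k=1$ and the distinguished $k\geq 2$ satisfy $D(\cdot,\Lambda_*)=0$, so the kernel on $L$--periodic functions is two--dimensional and Theorem \ref{CR} does not apply directly on this space. The key observation is that this $k$ is unique (any $k'\geq 2$ with $\Lambda_3(2\pi k'/L)=\Lambda_*$ would violate the strict monotonicity of $\Lambda_3$ in Lemma \ref{L:a}), so setting $\tilde L:=L/k$ one has $\tilde L\leq L/2\leq L_0/2\leq L_0$ and $\Lambda_3(2\pi/\tilde L)=\Lambda_3(2\pi k/L)=\Lambda_*$. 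Thus $\Lambda_*$ plays for $\tilde L$ exactly the role that $\Lambda_3(2\pi/L)$ played in Theorem \ref{MT4}, and a verbatim application of that theorem produces a real--analytic curve of $\tilde L$--periodic solutions bifurcating from $(\Lambda_*,0)$. Since $L=k\tilde L$, these solutions are automatically $L$--periodic, and both the critical--layer location and the amplitude comparison transfer directly from Theorem \ref{MT4}. The delicate check here is simply that the kernel on $\tilde L$--periodic functions is one--dimensional: for integers $k'\geq 2$, $\Lambda_3(2\pi k'/\tilde L)>\Lambda_*$ by monotonicity of $\Lambda_3$, $\Lambda_1(2\pi k'/\tilde L)>0>\Lambda_*$, and $\Lambda_2(2\pi k'/\tilde L)>\Lambda_2(2\pi/\tilde L)>\Lambda_3(2\pi/\tilde L)=\Lambda_*$ using the ordering in \eqref{Eq1} and the monotonicity of $\Lambda_2$; this is precisely the kernel argument of Theorem \ref{MT4}, which therefore applies without modification.
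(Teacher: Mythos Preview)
Your proof is correct and follows essentially the same route as the paper's: both split into the one--dimensional and two--dimensional kernel cases, both invoke Lemma \ref{L:a} for the monotonicity of $\Lambda_2$ and $\Lambda_3$ to control the kernel, and both dispose of case $(ii)$ by passing to period $\tilde L=L/k$ and applying Theorem \ref{MT4} verbatim. Your write--up is in fact more detailed than the paper's (which dismisses case $(ii)$ as ``obvious'' and case $(i)$ as ``similar to Theorem \ref{MT4}''), and your explicit amplitude computation $m^{21}_1(\Lambda_*)\to 2\gamma_1/3$ is a nice addition. The only minor omission is that you do not explicitly rule out $k=0$ in the kernel in case $(i)$; this is handled by the second line of \eqref{Eq1}, which you should cite alongside the $k\geq 2$ argument.
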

\begin{proof}
Setting $t:=2\pi/L\geq t_0,$ we know from \eqref{Eq1} and Lemma \ref{L:FP} that $\partial_{(f,h)}\Phi_1(\Lambda_2,0)$ is a Fredholm operator. 
To determine its kernel we need to solve $D(k,\Lambda_2)=\Phi(kt,\Lambda_2)=0.$
A solution of this equation is $k=1$ as $\Lambda_2=\Lambda_2(t).$
Equation \eqref{Eq1} ensures  additionally that $D(0,\Lambda_2)=0.$ 
Because $\Lambda_3$ is increasing to zero, there may exist  a (unique) integer $k\geq 2$ such that $\Lambda_3(2\pi k/L)= \Lambda_2(2\pi/L),$
hence $D(k, \Lambda_2)=0.$ In this case we are in the situation $(ii) $ and the proof is obvious.
If $\Lambda_3(2\pi k/L)\neq \Lambda_2(2\pi/L)$ for all integers $k\geq 2,$ then we are in the case $(i)$ and the proof  is similar to that of Theorem \ref{MT4}.
\end{proof}

 \begin{rem}
Since the properties of the functions $\Lambda_i(t)$, for $i=1,2,3$ were essential in finding the branches of solutions to the water wave problem,
 we summarize them in the Table \ref{T:T1} below.
 \end{rem}
 
 \begin{table}[h]
 \begin{tabular}{| c | c | c | c| }
\hline 
 $-$ & $\begin{array}{lll}\phantom{a}\\[-2ex] \gamma_1>0\\[-2ex]\phantom{a} \end{array}$   &$\gamma_1=0$ & $\gamma_1<0$ \\
\hline
 $\gamma_2>0$ & \multicolumn{3}{|c|}{ $\begin{array}{lll}\phantom{a}\\[-2ex] \Lambda_3<0<\Lambda_2<\Lambda_1\\ \Lambda_1\to \gamma_2d_2,\quad \Lambda_i\to 0,\,i\in\{2,3\}\\[-2ex]\phantom{a} \end{array}$}     \\
 \hline
 $\gamma_2=0$ & $\begin{array}{lll}\phantom{a}\\[-2ex] \Lambda_3<0<\Lambda_2<\Lambda_1<\gamma_1d_1\\ \Lambda_i\to 0,\quad i\in\{1,2,3\}\\[-2ex]\phantom{a} \end{array}$ &  $-$  & 
 $\begin{array}{lll}\phantom{a}\\[-2ex]\gamma_1d_1< \Lambda_3<\Lambda_2<0<\Lambda_1\\ \Lambda_i\to 0,\quad i\in\{1,2,3\} \\[-2ex]\phantom{a}\end{array}$    \\[1ex]
 \hline
 $\gamma_2<0$ &  \multicolumn{3}{|c|}{ $\begin{array}{lll}\phantom{a}\\[-2ex] \Lambda_3< \Lambda_2<0<\Lambda_1\\ \Lambda_3\to \gamma_2d_2,\quad \Lambda_i\to 0,\,i\in\{1,2\}\\[-2ex]\phantom{a} \end{array}$} \\   
\hline
\end{tabular}\\[2ex]
 \caption{Properties of the roots $\Lambda_i, i\in\{1,2,3\},$  of the dispersion relation \eqref{DRR} in dependence of the vorticity constants $\gamma_i, i\in\{1,2\}$ for $\gamma_1\neq\gamma_2 $ and for large $R_k=2\pi k/L.$
 Our analysis is dedicated to the cases:  $(i)$\,\,    $\gamma_2>0$  and $\gamma_1\neq \gamma_2;$ and $(ii)$\,\, $\gamma_2= 0$ and $\gamma_1< 0$.
 The analysis in the other two cases: $(iii)$\,\,    $\gamma_2<0$  and $\gamma_1\neq \gamma_2;$ and $(iv)$\,\, $\gamma_2= 0$ and $\gamma_1> 0$ is similar to that for $(i)$ and $(ii)$, respectively (see Remark \ref{R:R}).}
    \label{T:T1}   
\end{table}


\appendix
\section{}\label{App}
We present herein  the proof of Lemma \ref{L:FD} and additionally we rigorously prove that the streamline pattern for the solutions that we found is as shown in Figures \ref{Fig1}-\ref{Fig3}, respectively. 
To this end, we determine first explicit expressions for the elliptic and  boundary operators introduced right before Observation \ref{Obs:1}. 
Given $(f,h)\in\mathcal{O},$ it is easy to see that  
\begin{align}
 &\mathcal{A}(f)=\partial_{xx}-2\frac{d+y}{d_1+f}f^{\prime}\partial_{xy}+\frac{|(d+y)f'|^2+d_1^2}{(d_1+f)^2}\partial_{yy}-(d+y)\frac{(d_1+f)f^{\prime\prime}-2f^{\prime 2}}
 {(d_1+f)^2}\partial_{y},\label{Af}\\
& \mathcal{A}(f,h)=\partial_{xx}-2\frac{d_2h^{\prime}+(h-f)^{\prime}y}{h-f+d_2}\partial_{xy}+\frac{|d_2h^{\prime}+(h-f)^{\prime}y|^2+d_2^2}{(h-f+d_2)^2}\partial_{yy}\nonumber\\
 &\hspace{1.6cm}-\Big[\frac{d_2h^{\prime\prime}+(h-f)^{\prime\prime}y}{h-f+d_2}-2\frac{d_2h^{\prime}(h-f)^{\prime}+(h^{\prime}-f^{\prime})^2y}{(h-f+d_2)^2}\Big]\partial_{y},\label{Afh}
\end{align}
respectively, given $(w_1,w_2)\in C^{3+\alpha}_{e,per}(\overline\Omega_1)\times C^{3+\alpha}_{e,per}(\overline\Omega_2)$ and $\Lambda\in\mathbb{R}$, we have that 
\begin{align}
 &\mathcal{B}_1(\Lambda,(f,h),w_2)=\Big[|\partial_xw_2|^2-\frac{2d_2h^{\prime}}{h-f+d_2}\partial_x w_2\partial_y w_2 +\frac{d_2^2(1+h'^2)}{(h-f+d_2)^2}|\partial_y w_2|^2\Big]\Big|_{y=0}\nonumber\\
 &\hspace{3.1cm}+2g(d+h)-Q(\Lambda),\label{B1}\\
 &\mathcal{B}_2(\Lambda,(f,h))[w_1,w_2]=\frac{d_2}{h-f+d_2}\partial_{y}w_2\Big|_{y=-d_2}-\frac{d_1}{d_1+f}\partial_{y}w_1\Big|_{y=-d_2}.\label{B2}
\end{align}
\begin{proof}[Proof of Lemma \ref{L:FD}]
 Since
\begin{equation*}
\partial_{(f,h)}\Phi(\Lambda,0)[(f,h)]=\begin{pmatrix}\partial_f\Phi_1 (\Lambda,0)[f] & \partial_h\Phi_1 (\Lambda,0)[h]\\[1ex]
                                           \partial_f\Phi_2 (\Lambda,0)[f] & \partial_h\Phi_2 (\Lambda,0)[h] 
                                        \end{pmatrix}
\end{equation*}
we only need to determine the entries in the matrix $\partial_{(f,h)}\Phi(\Lambda,0).$\medskip

\noindent{\em The derivative $\partial_f\Phi_1 (\Lambda,0)$}
Using the definition of $\Phi_1$, we see that  
\begin{equation}\label{phi1f_i}
\partial_f\Phi_1 (\Lambda,0)[f]=2\Big[\frac{f}{d_2}|\partial_y \psi^{0}_{2}|^2+ \partial_y \psi^{0}_{2} \partial_y(\partial_f w_2(\Lambda,0)[f])\Big]\Big|_{y=0},
\end{equation}
whereby $\partial_y \psi^{0}_{2}\big|_{y=0}=\Lambda$ and $z:=\partial_f w_2(\Lambda,0)[f]$ is, in view of \eqref{DP2}, the solution of the Dirichlet problem
\begin{equation}\label{z1}
\left\{\begin{array}{lll}
 \Delta z=-\partial_f\mathcal{A}(0,0)[f]\psi_2^0 &\text{ in $\Omega_2,$}\\[0.3ex]
 z=0&\text{on $ \partial\Omega_2$}.
\end{array}\right.
\end{equation}
 A routine calculation shows now that 
$$\partial_f\mathcal{A}(0,0)[f]\psi_2^0=\frac{2\gamma_2 f}{d_2}+\Big(\frac{\gamma_2}{d_2}y^2+ \frac{\Lambda}{d_2} y\Big)f^{\prime\prime}.$$
Expanding $f$ and $z(y), y\in[-d_2,0],$  by their Fourier series, we have 
$$f=\sum_{k\in\mathbb{N}}a_k\cos\left(R_kx\right)\qquad{\rm and}\qquad z(y)=\sum_{k\in\mathbb{N}}a_kA_k(y)\cos\left(R_kx\right).$$
 The coefficients $A_k$ solve, in view of  \eqref{z1}, the following boundary value problem
$$
\left\{\begin{array}{lll}
        A_k''-R_k^2A_k=-\frac{2\gamma_2 }{d_2}+R_k^2
        \Big(\frac{\gamma_2}{d_2}y^2+\frac{\Lambda}{d_2}y\Big)& \text{in $(-d_2,0)$},\\[0.3ex]
        A_k(-d_2)=A_k(0)=0,
       \end{array}\right.
$$
and therefore
$$
A_k(y)=(\Lambda-\gamma_2 d_2)\frac{\sinh(R_ky)}{\sinh(R_kd_2)}- 
 \Big(\frac{\gamma_2}{d_2}y^2+\frac{\Lambda}{d_2}y\Big).$$
 Using the relation \eqref{phi1f_i} we obtain now that  
\begin{equation*}
\partial_f\Phi_1 (\Lambda,0)[f]=\sum_{k\in\mathbb{N}}m^{11}_k a_k\cos\left(R_kx\right),
 \end{equation*}
whereby $(m^{11}_k)_{k\in\mathbb{N}}$ is defined by \eqref{a11}. 
\medskip

\noindent{\em The derivative $\partial_h\Phi_1 (\Lambda,0)$}
We have that
$$\partial_h\Phi_1(\Lambda,0)[h]=2\Big[-\frac{h}{d_2}|\partial_y\psi^0_2|^2+\partial_y\psi^0_2\partial_y (\partial_h w_2(\Lambda,0)[h])\Big]\Big|_{y=0}+2gh,$$
with $z:=\partial_h w_2(\Lambda,0)[h])$ solving the Dirichlet problem
$$
\left\{\begin{array}{lll}
 \Delta z=-\partial_h\mathcal{A}(0,0)[h]\psi_2^0 &\text{in $\Omega_2,$}\\[0.3ex]
 z=0&\text{on $ \partial\Omega_2$},
\end{array}\right.
$$
cf. \eqref{DP2}.
Recalling \eqref{Afh}, we compute that
$$\partial_h\mathcal{A}(0,0)[h]\psi_2^0 =-\frac{2\gamma_2}{d_2}h-\Big(\frac{\gamma_2}{d_2}y^2+\frac{\gamma_2d_2+\Lambda}{d_2}y+\Lambda\Big)h^{\prime\prime}. $$
Using Fourier expansions as before, that is   
$$h=\sum_{k\in\mathbb{N}}b_k\cos\left(R_kx\right)\qquad{\rm and}\qquad z(y)=\sum_{k\in\mathbb{N}}b_kB_k(y)\cos\left(R_kx\right)\quad\text{for $ y\in[-d_2,0],$}$$
 we obtain that the coefficients $B_k$ satisfy
$$
\left\{\begin{array}{lll}
        B_k''-R_k^2B_k=\frac{2\gamma_2 }{d_2}-R_k^2
        \Big(\frac{\gamma_2}{d_2}y^2+\frac{\gamma_2d_2+\Lambda}{d_2}y+\Lambda\Big)&\text{in $(-d_2,0)$},\\[0.3ex]
        B_k(-d_2)=B_k(0)=0.
       \end{array}\right.
$$
The solution of this boundary value problem is
\begin{align*}
B_k(y)=&-\Lambda\Big(\frac{\sinh\left(R_ky\right)}{\tanh\left(R_kd_2\right)} +\cosh\left(R_ky\right)\Big)+\frac{\gamma_2}{d_2}y^2+\frac{\gamma_2d_2+\Lambda}{d_2}y+\Lambda,\label{Bk}
\end{align*}
and the desired representation for the derivative $\partial_h\Phi_1 (\Lambda,0)$ follows at once.
\medskip

\noindent{\em The derivative $\partial_f\Phi_2 (\Lambda,0)$}
From the definition of $\Phi_2$ we obtain that 
\begin{align*}
 \partial_f\Phi_2(\Lambda,0)[f]= \Big[\frac{f}{d_2}\partial_y\psi^0_2+\frac{f}{d_1}\partial_y\psi^0_1+\partial_y\big(\partial_f w_2(\Lambda,0)[f] - \partial_f w_1(\Lambda,0)[f]\big)\Big]\Big|_{y=-d_2},
\end{align*}
whereby, in the equality above, $z:=\partial_f w_1(\Lambda,0)[f]$ solves the Dirichlet problem
$$
\left\{\begin{array}{lll}
 \Delta z=-\partial_f\mathcal{A}(0)[f]\psi_1^0 &\text{ in $\Omega_1$},\\
 z=0&\text{ on  $\partial\Omega_1.$}
\end{array}\right.
$$
In view of \eqref{Af}, we compute that 
\begin{align*}
\partial_f\mathcal{A}(0)[f]\psi_1^0=&-\frac{2\gamma_1 f}{d_1}- \Big[\frac{\gamma_1}{d_1}y^2
+\Big(\frac{d+d_2}{d_1}\gamma_1 +\frac{\Lambda-\gamma_2d_2}{d_1}\Big)y +\frac{d\Lambda}{d_1}+\frac{d d_2(\gamma_1-\gamma_2)}{d_1}\Big] f''.
\end{align*}
Expanding $f$ and $z(y), y\in[-d,-d_2],$ by their  Fourier series 
$$f=\sum_{k\in\mathbb{N}}a_k\cos\left(R_kx\right)\qquad{\rm and}\qquad z(y)=\sum_{k\in\mathbb{N}}a_kC_k(y)\cos\left(R_kx\right),$$
we find  that the coefficient  $C_k$ is the solution of 
$$
\left\{\begin{array}{llll}
        C_k^{\prime\prime}-R_k^2 C_k=\frac{2\gamma_1 }{d_1}-R_k^2 \Big[\frac{\gamma_1}{d_1}y^2
+\Big(\frac{d+d_2}{d_1}\gamma_1 +\frac{\Lambda-\gamma_2d_2}{d_1}\Big)y +\frac{d\Lambda}{d_1}+\frac{d d_2(\gamma_1-\gamma_2)}{d_1}\Big]&\text{in $(-d,-d_2),$}\\[0.3ex]
        C_k(-d)=C_k(-d_2)=0,
       \end{array}\right.
$$
whence
\begin{align*}
 C_k(y)=&\Lambda\frac{\sinh\left((d+y)R_k\right)}{\sinh\left(R_k d_1\right)}+\frac{\gamma_1}{d_1}y^2
+\Big(\frac{d+d_2}{d_1}\gamma_1 +\frac{\Lambda-\gamma_2d_2}{d_1}\Big)y +\frac{d\Lambda}{d_1}+\frac{d d_2(\gamma_1-\gamma_2)}{d_1}.
\end{align*}
The representation of $\partial_f\Phi_2 (\Lambda,0)$ as a Fourier multiplier follows now easily. 
\medskip

\noindent{\em The derivative $\partial_h\Phi_2 (\Lambda,0)$}
Observing that  
\begin{align*}
 \partial_h\Phi_2(\Lambda,0)[h]=\frac{\gamma_2d_2-\Lambda}{d_2}h +\partial_y(\partial_h w_2(\Lambda,0)[h])|_{y=-d_2},
\end{align*}
 the desired representation for $\partial_h\Phi_2 (\Lambda,0)$ follows by using the expression  for $\partial_h w_2(\Lambda,0)[h]$ determined in the second part of this proof.
\end{proof}

In the remaining part we establish the  Lemmas \ref{fig1_left}-\ref{fig3_right} that   provide the justification for the streamlines pattern, as seen from a reference frame moving with the wave, as shown in Figures \ref{Fig1}-\ref{Fig3}.
Because the proofs of Lemmas \ref{fig1_left}-\ref{fig3_right} use similar arguments, we present herein only the proof  for Lemma \ref{fig1_left}. 
For this, it is important to note that because there is no time dependence in problem \eqref{VF} (or \eqref{PB2}), the particle trajectories and the streamlines corresponding to the solutions found in Theorems \ref{MT1}-\ref{MT3}, \ref{MT4}, \ref{MT5}  
coincide with the level curves of  the corresponding stream function.
They are parametrized by   solutions of the system of ordinary differential equations
 \begin{equation}\label{Eqq}
  \left\{\begin{array}{l}
  x'=u-c=\psi_{y}, \\
 y'=v=-\psi_{x},
 \end{array}\right.
 \end{equation}
 stagnation points of the flows corresponding to equilibria of \eqref{Eqq}.
 Hence, our task is to determine the level curves of the stream function.
 The direction of motion of the particles along the level curves is determined by the sign of $u-c$ or $v$.

\begin{lem}\label{fig1_left}
 Assume that $\gamma_1>\gamma_2>0$ and  let
 \[
((f,h),\psi_1,\psi_2)\in \big(C^{3+\alpha}_{per}(\mathbb{R})\big)^2\times C^{3+\alpha}_{per}\big(\, \overline{\Omega(f)}\, \big)\times C^{3+\alpha}_{per}\big(\, \overline{\Omega(f,h)}\, \big) 
\]
be a solution of \eqref{PB2} that is determined by a point $(\Lambda(s), (f,h)(s))$ on the bifurcation curve  found in Theorem \ref{MT1}.
Provided that $s$ is small enough, the following assertions are true:
 \begin{enumerate}
  \item [ $(i)$ ] $f'>0$ and $h'>0$ on $\left(0,L/2\right);$
  \item [$(ii)$ ] $\partial_x\psi_{2}<0$ in $\{(x,y)\in\Omega(f,h)\,:\, x\in(0,L/2)\}$ and $\partial_y\psi_{2} >0$ in  $\Omega(f,h);$ 
  \item [$(iii)$ ] $\partial_x\psi_{1}<0$  in $\{(x,y)\in\Omega(f)\,:\, x\in(0,L/2)\};$
 \item [$(iv)$ ] There is a  smooth curve $\{(x,\xi(x)):x\in [0,L/2]\}$ with $-d<\xi(x)<-d_2+f(x)$ for all $x\in[0,L/2]$ and satisfying additionally:
 \begin{enumerate}
 \item [ $(a)$ ] Given $x\in [0,L/2]$, it holds that: $\partial_y\psi_{1}(x,\xi (x))=0$, $\partial_y\psi_{1}(x,y)<0$ for all $y\in[-d,\xi(x)), $ and $\partial_y\psi_{1}(x,y)>0$ for all $y\in(\xi(x), -d_2+f(x)];$
 \item [ $(b)$ ] $\xi$ is strictly decreasing on $[0,L/2]$;
 \item [ $(c)$ ] The function $[x\mapsto \psi_1(x,\xi (x))]$ is strictly decreasing on $[0,L/2]$.
 \end{enumerate} 
 \end{enumerate}
 \end{lem}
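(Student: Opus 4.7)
The plan is to carry out a first-order expansion in $s$ around the laminar flow $(\psi_1^0,\psi_2^0,0,0)$ and then combine elementary elliptic tools (strong maximum principle, Hopf's lemma, implicit function theorem) with two structural facts: every first-order $s$-correction inherits the $\cos(R_1 x)$-dependence from the bifurcation direction ($R_1:=2\pi/L$), and $\partial_x\psi_i$ is harmonic on the physical domain since $\Delta\psi_i=\gamma_i$. I will use throughout that $\Lambda_1>\gamma_2 d_2$ in the regime $\gamma_1>\gamma_2$ (by Lemma \ref{Lm}, $\Lambda_1$ decreases to $\gamma_2 d_2$), together with $\Lambda_1<\gamma_1 d_1+\gamma_2 d_2$ (the stagnation condition \eqref{bed} driving the bifurcation analysis of Theorem \ref{MT1} in this case).

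For (i), I read off from Theorem \ref{MT1} and the formulas \eqref{a21}--\eqref{a22} that $m^{22}_1(\Lambda_1)<0$, and the identity $-m^{22}_1/m^{21}_1\to+\infty$ from the proof of Theorem \ref{MT1} forces $m^{21}_1(\Lambda_1)>0$ for $L$ small. Differentiating the expansion $(f,h)(s)=s(m^{22}_1(\Lambda_1),-m^{21}_1(\Lambda_1))\cos(R_1 x)+O(s^2)$ in $C^{3+\alpha}$ and taking (without loss of generality) $s>0$, both $f'(x)$ and $h'(x)$ acquire the sign of $\sin(R_1 x)$, positive on $(0,L/2)$. For (ii) and (iii), $\partial_y\psi_2^0(y)=\gamma_2 y+\Lambda_1\ge\Lambda_1-\gamma_2 d_2>0$ on $[-d_2,0]$ plus Lemma \ref{L:1} gives $\partial_y\psi_2>0$ on $\overline{\Omega(f,h)}$ for $s$ small. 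On the half-period domain the harmonic function $\partial_x\psi_i$ vanishes on $x=0$ and $x=L/2$ by evenness, vanishes on $y=-d$ (where $\psi_1=m$), and on the remaining horizontal boundaries I differentiate $\psi_2=0$ and $\psi_1=\psi_2=\lambda$ in $x$ to get $\partial_x\psi_i=-(\text{boundary slope})\cdot\partial_y\psi_i<0$, using (i) and the interface continuity $\partial_y\psi_1\big|_{y=-d_2+f}=\partial_y\psi_2\big|_{y=-d_2+f}=\Lambda_1-\gamma_2 d_2+O(s)>0$; the strong maximum principle then forces strict negativity in the interior.

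For (iv), a short computation using \eqref{FK} and \eqref{Lam} gives $\partial_y\psi_1^0(-d)=\Lambda_1-\gamma_2 d_2-\gamma_1 d_1<0$ and $\partial_y\psi_1^0(-d_2)=\Lambda_1-\gamma_2 d_2>0$, so by $\partial_{yy}\psi_1^0=\gamma_1>0$ there is a unique zero $y_0=-d_2-(\Lambda_1-\gamma_2 d_2)/\gamma_1\in(-d,-d_2)$. The implicit function theorem applied to $F(s;x,y):=\partial_y\psi_1(s;x,y)$ at $(0,x,y_0)$ (where $\partial_y F=\gamma_1\ne 0$) produces the $C^{3+\alpha}$ curve $\xi$ with $\partial_y\psi_1(x,\xi(x))=0$, lying in $\Omega(f)$ for $s$ small by continuity; part (a) follows from continuity of $\partial_y\psi_1$ and the strict sign change of $\partial_y\psi_1^0$ at $y_0$. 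Part (c) follows from $\tfrac{d}{dx}\psi_1(x,\xi(x))=\partial_x\psi_1(x,\xi(x))$ (the other term vanishes on the curve) combined with (iii).

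The hardest step will be (iv)(b). From the implicit function theorem, $\xi'(x)=-\partial_{xy}\psi_1(x,\xi(x))/\partial_{yy}\psi_1(x,\xi(x))$ with denominator $\gamma_1+O(s)>0$, so I must verify $\partial_{xy}\psi_1(x,\xi(x))>0$ on $(0,L/2)$ for small $s>0$. Since $\partial_{xy}\psi_1^0\equiv 0$, this is a first-order computation: I linearize \eqref{DP1} at $s=0$ (recovering the function $C_1$ of the proof of Lemma \ref{L:FD}) and pull back to the physical domain through $\Phi_f^{-1}(x,Y)_2=Y-(Y+d)f(x)/d_1+O(s^2)$, obtaining $\partial_s\psi_1\big|_{s=0}(x,y)=G(y)\cos(R_1 x)$ with
\[
G(y)=m^{22}_1(\Lambda_1)\Big[C_1(y)-\frac{(y+d)\,\partial_y\psi_1^0(y)}{d_1}\Big].
\]
At $y=y_0$ the algebraic part of $C_1'(y_0)$ cancels exactly with $\gamma_1(y_0+d)/d_1$, leaving
\[
G'(y_0)=m^{22}_1(\Lambda_1)\cdot\frac{\Lambda_1\,R_1\cosh\bigl((d+y_0)R_1\bigr)}{\sinh(R_1 d_1)}<0.
\]
Hence $\partial_{xy}\psi_1(x,y_0)=-s R_1\,G'(y_0)\sin(R_1 x)+O(s^2)>0$ on $(0,L/2)$ for small $s>0$, and continuity transfers this to the curve $(x,\xi(x))$, giving $\xi'(x)<0$. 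The crucial simplification is the cancellation collapsing $G'(y_0)$ to a single transparently signed hyperbolic term, forced by the algebraic structure of $C_1$; without it one would face a delicate sign competition among many terms, and this is where the calculation is most likely to break down if any coefficient is mis-tracked.
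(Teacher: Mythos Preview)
Your proof is correct and follows essentially the same route as the paper: perturbation of the laminar stream function, maximum principle for the harmonic function $\partial_x\psi_i$, implicit function theorem for $\xi$, and a first-order computation of $\partial_{xy}\psi_1$ to get the sign of $\xi'$. The only notable difference is that you evaluate the cancellation in $G'$ specifically at $y_0$ (using $\partial_y\psi_1^0(y_0)=0$), whereas the paper carries out the chain-rule expansion of $\partial_{xy}\psi_1$ directly and observes that the algebraic terms cancel \emph{identically in $y$}, yielding
\[
\partial_{xy}\psi_1(x,y)= -s\,m^{22}_1(\Lambda_1)\,R_1^2\,\Lambda_1\,\frac{\cosh\bigl(R_1(d+y)\bigr)}{\sinh(R_1 d_1)}\sin(R_1 x)+O(s^2)
\]
throughout $\overline{\Omega(f)}$; your formula at $y_0$ is the special case of this, so the ``crucial simplification'' you flag is in fact not tied to the stagnation level at all. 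One small imprecision: $\partial_s\psi_1\big|_{s=0}$ also contains the $x$-independent contribution $\partial_\Lambda w_1(\Lambda_1,0)\,\Lambda'(0)$, which you have dropped from $G$, but since it is annihilated by $\partial_x$ this does not affect the argument.
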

\begin{proof}
Since $\Lambda(0)=\Lambda_1\in(\gamma_2 d_2,\gamma_1 d_1+\gamma_2 d_2)$, for small $s$  it holds $\Lambda(s)\in(\gamma_2 d_2,\gamma_1 d_1+\gamma_2 d_2)$.
Recalling that 
\begin{equation}
f(s)=s m^{22}_{1}(\Lambda_1)\cos\left(\frac{2\pi}{L}x\right)+O(s^2),\quad h(s)=-s m^{21}_{1}(\Lambda_1)\cos\left(\frac{2\pi}{L}x\right)+O(s^2),
\end{equation}
with $m^{22}_{1}(\Lambda_1)<0$ and $m^{21}_{1}(\Lambda_1)>0$, the claim $(i)$ follows  by using the same arguments as in the proof of  \cite[Lemma 4.2]{W09}. 

For $(ii),$ we see first that $\partial_y\psi_{2}^{0}=\gamma_2 y+\Lambda_1>\gamma_2 y+\gamma_2 d_2>0$ in $\overline\Omega_2$. 
Therefore, $\partial_y\psi_{2} >0$ in  $\Omega(f,h)$ provided that $s$ is small.
Using now $(i)$ and the fact that $\psi$ is constant on $\partial\Omega(f,h)$ and even with respect to $x$, it is easy to see that $\partial_x\psi_{2}\leq 0$ on the boundary of the set $\{(x,y)\in\Omega(f,h)\,:\, x\in(0,L/2)\}.$ 
Observing that $\partial_x\psi_{2}(x,h(x))<0$ for all $x\in(0,L/2)$ and $\Delta\psi_x=0$ in $\Omega(f,h),$ elliptic maximum principles ensure that $\partial_x\psi_2<0$ in $\{(x,y)\in\Omega(f,h)\,:\, x\in(0,L/2)\}.$
The claim $(iii)$ is obtained in a similar manner.

For $(iv),$ we remark that 
\begin{equation*}
\partial_y\psi^{0}_{1}\big|_{y=-d}<0,\qquad \partial_y\psi^{0}_{1}\big|_{y=-d_2}>0,\qquad \partial_{yy}\psi^{0}_1>0 \text{ in $\overline\Omega_1.$}
\end{equation*}
Therewith, for small $s$ the function $\psi_1$ satisfies the similar inequalities
\begin{equation}\label{cont_cond}
\partial_y\psi_{1}\big|_{y=-d}<0,\qquad \partial_y\psi_{1}\big|_{y=-d_2+f}>0,\qquad \partial_{yy}\psi_1>0 \text{ in $\overline\Omega(f).$}
\end{equation}
Hence, for each $x\in [0,L/2]$, there exists a unique $\xi(x)\in(-d,-d_2+f(x))$ such that $\partial_y\psi_1(x,\xi(x))=0$.
Due to the third inequality in \eqref{cont_cond} we conclude
from the implicit function theorem that $\xi$ is smooth and 
\begin{equation}\label{TL}
 \partial_{xy}\psi_{1}(x,\xi(x))+\xi'(x)\partial_{yy}\psi_{1}(x,\xi(x))=0\qquad\text{for all $x\in[0,L/2].$}
\end{equation}
We are going to determine now the sign of $\partial_{xy}\psi_{1}$. 
To this end note that $\psi_1 =w_1\circ \Phi_{f}^{-1}$ where $w_1\in C^{3+\alpha}_{e,per}(\overline{\Omega}_1)$ is the unique solution
of the problem \eqref{DP1}, that is $w_1:=w_1(\Lambda(s), (f,h)(s)).$ 
By the chain rule we get 
\begin{align*}
 \partial_{xy}\psi_1=&-\frac{d_1 f^{\prime} }{(d_1 +f)^2}\partial_y w_1\circ \Phi_{f}^{-1} +\frac{d_1}{d_1+f}\partial_{xy}w_1\circ \Phi_{f}^{-1}-\frac{d_1^2 f^{\prime} y}{(d_1+f)^3}\partial_{yy}w_1\circ \Phi_{f}^{-1} \\
 &+\frac{d d_1 ff^{\prime}}{(d_1+f)^3}\partial_{yy}w_1\circ \Phi_{f}^{-1}-\frac{d d_1 f^{\prime} }{(d_1 +f)^2}\partial_{yy}w_1\circ \Phi_{f}^{-1}.
\end{align*}
On the other hand  we have the following expansion
\begin{equation*}
 w_1(\Lambda(s) ,(f, h)s))=w_1(\Lambda_1,0)+\partial_{\Lambda}w_1(\Lambda_1, 0)[\Lambda(s)-\Lambda_1]+\partial_f w_1(\Lambda_1 ,0)[f(s)]+O(s^2) 
\end{equation*}
in $C^{3+\alpha}_{e,per}(\overline\Omega_1).$
Observing that
\begin{align*}
\left.\begin{array}{lll}
 &\partial_y w_1\circ \Phi_{f}^{-1}=\partial_y\psi^{0}_1+O(s),\\
 &\partial_{xy}w_1\circ \Phi_{f}^{-1}=\partial_{xy}(\partial_f w_1(\Lambda_1 ,0)[f])+O(s^2),\\
 &\partial_{yy}w_1\circ \Phi_{f}^{-1}=\gamma_1 +O(s),
\end{array}\right\}\qquad\text{in $C^{2+\alpha}_{e,per}\big(\, \overline{\Omega(f)}\, \big)$,}
\end{align*}
a lengthy calculation leads us to 
$$\partial_{xy}\psi_1=-s m^{22}_{1}(\Lambda_1)L_1\Lambda_1 \frac{\cosh(L_1(d+y))}{\sinh(L_1 d_1)}\sin\left(L_1 x\right)+O(s^2)\qquad\text{in $C^{1+\alpha}_{e,per}\big(\, \overline{\Omega(f)}\, \big)$}.$$
A similar argument to the one used in $(i)$ shows that $\partial_{xy}\psi_1>0$ in  $\Omega(f)$ if $s>0$ is sufficiently small.
The latter property together with \eqref{cont_cond} and \eqref{TL} implies that 
$\xi' <0$ in $x\in\left(0,L/2\right)$.
This proves the claim in $(b)$. 
Since $(c)$ is  an obvious consequence of $(iii)$ we have completed the proof.
\end{proof}

It follows now readily from Theorem \ref{MT1} and Lemma \ref{fig1_left} that the streamline pattern in the moving frame for the non laminar solutions found in 
Theorem \ref{MT1} for $\gamma_1>\gamma_2$ is as in Figure \ref{Fig1} (left picture).
The next lemma justifies the right picture of Figure \ref{Fig1}.

\begin{lem}\label{fig1_right}
 Assume that $\gamma_1<\gamma_2,\gamma_2>0$ and let
 \[
((f,h),\psi_1,\psi_2)\in \big(C^{3+\alpha}_{per}(\mathbb{R})\big)^2\times C^{3+\alpha}_{per}\big(\, \overline{\Omega(f)}\, \big)\times C^{3+\alpha}_{per}\big(\, \overline{\Omega(f,h)}\, \big) 
\]
be a solution of \eqref{PB2} that is determined by a point $(\Lambda(s), (f,h)(s))$ on the bifurcation curve  found in Theorem \ref{MT1}.
Provided that $s$ is small enough, the following assertions are true:
 \begin{enumerate}
  \item [ $(i)$ ] $f'>0$ and $h'<0$ on $\left(0,L/2\right);$
  \item [$(ii)$ ] $\partial_x\psi_{1}>0$  in $\{(x,y)\in\Omega(f)\,:\, x\in(0,L/2)\}$ and $\partial_y\psi_{1} <0$ in  $\Omega(f);$ 
  \item [$(iii)$ ] $\partial_x\psi_2>0$  in  in $\{(x,y)\in\Omega(f,h)\,:\, x\in(0,L/2)\};$
 \item [$(iv)$ ]There is a  smooth curve $\{(x,\xi(x)):x\in [0,L/2]\}$ with $-d_2+f(x)<\xi(x)<h(x)$ for all $x\in[0,L/2]$ and satisfying additionally:
 \begin{enumerate}
 \item [ $(a)$ ] Given $x\in [0,L/2]$, it holds that: $\partial_y\psi_{2}(x,\xi (x))=0$, $\partial_y\psi_{2}(x,y)<0$ for all $y\in[-d_2+f(x),\xi(x)), $ and $\partial_y\psi_{2}(x,y)>0$ for all $y\in(\xi(x), h(x)];$
 \item [ $(b)$ ] $\xi$ is strictly decreasing on $[0,L/2]$;
 \item [ $(c)$ ] The function $[x\mapsto \psi_2(x,\xi (x))]$ is strictly increasing on $[0,L/2]$.
 \end{enumerate} 
 \end{enumerate}
 \end{lem}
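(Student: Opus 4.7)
The plan is to mirror the proof of Lemma \ref{fig1_left} with the roles of $\psi_1$ and $\psi_2$ (and hence of the two layers) interchanged, since in the present regime the critical layer lives in the upper layer $\Omega(f,h)$ while $\partial_y\psi_1$ keeps a definite sign throughout $\overline{\Omega(f)}$. For $(i)$, Lemma \ref{Lm} combined with $\gamma_1<\gamma_2$ forces $\Lambda_1\in(0,\gamma_2 d_2)$; formula \eqref{a22} then gives $m^{22}_1(\Lambda_1)=-\Lambda_1 R_1/\sinh(R_1 d_2)<0$, hence $f'>0$ on $(0,L/2)$. To determine the sign of $m^{21}_1(\Lambda_1)$ I would use the identity $m^{11}_1 m^{22}_1 = m^{12}_1 m^{21}_1$ coming from $D(1,\Lambda_1)=0$: both $m^{11}_1$ and $m^{22}_1$ are negative, while the asymptotics recorded inside the proof of Theorem \ref{MT1} show $m^{12}_1\to-\infty$ as $L\to 0$, forcing $m^{21}_1<0$ after possibly shrinking $L_0$, so $h'<0$ on $(0,L/2)$.

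For $(ii)$ and $(iii)$ I would apply the strong maximum principle to the harmonic functions $\partial_x\psi_i$ on the half-period cells $\{0<x<L/2\}\cap\Omega(f)$ and $\{0<x<L/2\}\cap\Omega(f,h)$. Evenness of $\psi_i$ in $x$ gives $\partial_x\psi_i=0$ on the vertical sides; on each horizontal-type boundary $\psi_i$ is constant, so $\partial_x\psi_i=-(\textrm{slope})\,\partial_y\psi_i$. For $\psi_1$ one has $\partial_y\psi_1^0(-d_2)=\Lambda_1-\gamma_2 d_2<0$ and, under the standing hypothesis that the base flow has no stagnation in the bottom layer (equivalently $\Lambda_1<\gamma_2 d_2+\gamma_1 d_1$, so that \eqref{bed} fails), $\partial_y\psi_1^0<0$ throughout $\overline{\Omega}_1$; together with $f'>0$ this yields $\partial_x\psi_1\ge 0$ on all four boundaries, strictly so on the interface, hence $\partial_x\psi_1>0$ inside. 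For $\psi_2$ one has $\partial_y\psi_2^0(0)=\Lambda_1>0$ and $\partial_y\psi_2^0(-d_2)=\Lambda_1-\gamma_2 d_2<0$, so with $h'<0$ on the surface and $f'>0$ on the interface the boundary expressions $-h'\partial_y\psi_2$ and $-f'\partial_y\psi_2$ are strictly positive on $(0,L/2)$, and the maximum principle closes the argument.

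For $(iv)$ the base field $\partial_y\psi_2^0(y)=\gamma_2 y+\Lambda_1$ is strictly increasing from a negative value at $y=-d_2$ to $\Lambda_1>0$ at $y=0$, so it has a unique zero $y_0\in(-d_2,0)$; since $\partial_{yy}\psi_2=\gamma_2>0$ persists for small $s$, the implicit function theorem delivers a smooth curve $\xi:[0,L/2]\to(-d_2+f(x),h(x))$ with $\partial_y\psi_2(x,\xi(x))=0$, giving $(a)$. Differentiating this identity yields $\xi'=-\partial_{xy}\psi_2/\partial_{yy}\psi_2$, so $(b)$ reduces to proving $\partial_{xy}\psi_2>0$ on $(0,L/2)$ along $\xi$. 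I would obtain this by expanding
\[
w_2(\Lambda(s),(f,h)(s))=\psi_2^0+s\bigl(m^{22}_1(\Lambda_1)A_1(y)-m^{21}_1(\Lambda_1)B_1(y)\bigr)\cos(2\pi x/L)+O(s^2)
\]
with $A_1$, $B_1$ as computed in the proof of Lemma \ref{L:FD}, applying the chain rule to $\psi_2=w_2\circ\Phi_{(f,h)}^{-1}$ exactly as at the end of the proof of Lemma \ref{fig1_left}, and arriving at $\partial_{xy}\psi_2=s\,\Theta(y)\sin(2\pi x/L)+O(s^2)$ for an explicit combination $\Theta(y)$ of hyperbolic and polynomial terms. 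Claim $(c)$ is then immediate from $\frac{d}{dx}\psi_2(x,\xi(x))=\partial_x\psi_2(x,\xi(x))$ and $(iii)$.

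The main obstacle I expect is the sign verification $\Theta(y)>0$ near $y=y_0$: since both $m^{22}_1(\Lambda_1)$ and $m^{21}_1(\Lambda_1)$ are negative, the two contributions $m^{22}_1 A_1$ and $-m^{21}_1 B_1$ partially compete. The resolution should exploit the amplitude relation $|m^{22}_1(\Lambda_1)/m^{21}_1(\Lambda_1)|\to\infty$ established in Theorem \ref{MT1}$(i)$, which makes the $A_1$-contribution dominant in the small-$L$ regime; the resulting leading-order expression has the same structural form as the one displayed at the end of the proof of Lemma \ref{fig1_left}, from which the desired sign is read off.
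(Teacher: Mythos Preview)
Your approach---mirror the proof of Lemma~\ref{fig1_left} with the two layers interchanged---is precisely what the paper means by ``similar arguments,'' and your outline for $(i)$, $(iii)$, $(iv)(a)$, and $(iv)(c)$ is correct and matches the paper.

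There is, however, a genuine gap in your treatment of $(ii)$. You invoke a ``standing hypothesis'' that $\Lambda_1<\gamma_2 d_2+\gamma_1 d_1$ (so that \eqref{bed} fails) in order to conclude $\partial_y\psi_1^0<0$ on all of $\overline{\Omega}_1$. This hypothesis is \emph{not} among those of the lemma, and it does not follow from them: it is automatic when $\gamma_1\ge 0$ (since then $\gamma_2 d_2+\gamma_1 d_1\ge\gamma_2 d_2>\Lambda_1$), but for $\gamma_1<0$ Lemma~\ref{Lm} gives $\Lambda_1\nearrow\gamma_2 d_2>\gamma_2 d_2+\gamma_1 d_1$, and then $\partial_y\psi_1^0(-d)=\Lambda_1-\gamma_2 d_2-\gamma_1 d_1>0$, so $\partial_y\psi_1^0$ changes sign in $\overline{\Omega}_1$. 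Two remarks: first, the maximum-principle step for $\partial_x\psi_1>0$ does \emph{not} need the global sign of $\partial_y\psi_1$---on $y=-d$ one has $\partial_x\psi_1=0$ since $\psi_1=m$ there, and on $y=-d_2+f(x)$ the relation $\partial_x\psi_1=-f'\partial_y\psi_1$ is positive because $f'>0$ and $\partial_y\psi_1\approx\Lambda_1-\gamma_2 d_2<0$ near the interface; so that half of $(ii)$ is fine. Second, the other half of $(ii)$, namely $\partial_y\psi_1<0$ in all of $\Omega(f)$, is genuinely false for the base flow when $\gamma_1<0$, so either the lemma is meant only for $0\le\gamma_1<\gamma_2$ or assertion $(ii)$ must be weakened. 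You should flag this rather than smuggle in an unlisted hypothesis.

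A smaller point on $(iv)(b)$: invoking $|m^{22}_1/m^{21}_1|\to\infty$ is the right idea, but both $m^{22}_1$ and $m^{21}_1$ tend to zero and the two hyperbolic factors $\cosh(R_1 y)$ and $\cosh(R_1(y+d_2))$ differ exponentially in size, so dominance cannot be read off the bare ratio. The honest argument evaluates both contributions at $y$ near $y_0=-\Lambda_1/\gamma_2$ (which approaches $-d_2$ as $L\to 0$) and checks directly that the $A_1$-term converges to a nonzero constant while the $B_1$-term vanishes; you should write this out rather than leave it as a heuristic.
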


 The next lemma provides a justification for the left picture of Figure \ref{Fig2}.
 
\begin{lem}\label{fig2_left}
Assume that $\gamma_2>0, \gamma_1 d_1+\gamma_2 d_2>0$ and let 
\[
((f,h),\psi_1,\psi_2)\in \big(C^{3+\alpha}_{per}(\mathbb{R})\big)^2\times C^{3+\alpha}_{per}\big(\, \overline{\Omega(f)}\, \big)\times C^{3+\alpha}_{per}\big(\, \overline{\Omega(f,h)}\, \big) 
\]
be a solution of \eqref{PB2} that is determined by a point $(\Lambda(s), (f,h)(s))$ on the bifurcation curve  found in Theorem \ref{MT2}.
Provided that $s$ is small enough, the following assertions are true:
 \begin{enumerate}
  \item [ $(i)$ ] $f'>0$ and $h'<0$ on $\left(0,L/2\right);$
  \item [$(ii)$ ] $\partial_x\psi_{1}>0$  in $\{(x,y)\in\Omega(f)\,:\, x\in(0,L/2)\}$ and $\partial_y\psi_{1} <0$ in  $\Omega(f);$ 
  \item [$(iii)$ ] $\partial_x\psi_2>0$  in  in $\{(x,y)\in\Omega(f,h)\,:\, x\in(0,L/2)\};$
 \item [$(iv)$ ] There is a smooth curve $\{(x,\xi(x)):x\in [0,L/2]\}$ with $-d_2+f(x)<\xi(x)<h(x)$ for all $x\in[0,L/2]$ and satisfying additionally:
 \begin{enumerate}
 \item [ $(a)$ ] Given $x\in [0,L/2]$, it holds that: $\partial_y\psi_{2}(x,\xi (x))=0$, $\partial_y\psi_{2}(x,y)<0$ for all $y\in[-d_2+f(x),\xi(x)), $ and $\partial_y\psi_{2}(x,y)>0$ for all $y\in(\xi(x), h(x)];$
 \item [ $(b)$ ] $\xi$ is strictly decreasing on $[0,L/2]$;
 \item [ $(c)$ ] The function $[x\mapsto \psi_2(x,\xi (x))]$ is strictly increasing on $[0,L/2]$.
 \end{enumerate} 
 \end{enumerate}
\end{lem}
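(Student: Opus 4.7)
The plan is to adapt the argument of Lemma \ref{fig1_left} to the present geometry. The essential difference is that the critical layer, which in Lemma \ref{fig1_left} sat in $\Omega(f)$, must now be located in the top layer $\Omega(f,h)$; correspondingly, $\partial_y \psi_1$ has to be shown to be strictly negative throughout $\overline{\Omega(f)}$, while $\partial_y\psi_2$ changes sign inside $\Omega(f,h)$.

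For $(i)$, I would determine the signs of $m^{22}_1(\Lambda_2)$ and $m^{21}_1(\Lambda_2)$: from \eqref{a22} the former is negative because $\Lambda_2>0$, and the asymptotic identity $-m^{21}_1(\Lambda_2)/m^{22}_1(\Lambda_2)\to -\infty$ as $L\to 0$ derived in the proof of Theorem \ref{MT2} then forces $m^{21}_1(\Lambda_2)<0$ for $L$ small. Inserting these signs into the leading term of $(f,h)(s)$ and repeating the oscillation argument of \cite[Lemma 4.2]{W09} yields $f'>0$ and $h'<0$ on $(0,L/2)$. For $(ii)$, a short calculation starting from the explicit form of $\psi_1^0$ and using \eqref{FK} gives $\partial_y\psi_1^0(y)=\gamma_1 y+\gamma_1 d_2+\Lambda_2-\gamma_2 d_2$, linear in $y$, with endpoint values $\Lambda_2-\gamma_2 d_2<0$ at $y=-d_2$ and $\Lambda_2-(\gamma_1 d_1+\gamma_2 d_2)$ at $y=-d$. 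The latter is negative for $L$ small because $\Lambda_2(2\pi/L)\to 0$ and $\gamma_1 d_1+\gamma_2 d_2>0$ by hypothesis, so by linearity $\partial_y\psi_1^0<0$ on $\overline\Omega_1$, and this persists for $\psi_1$ at small $s$. Claim $(iii)$ then follows exactly as in Lemma \ref{fig1_left}: the signs of $\partial_x\psi_i$ on the boundary of the half-period strip are fixed by $(i)$, $(ii)$, evenness, and the Dirichlet data, and the strong maximum principle applied to the harmonic function $\partial_x\psi_i$ (note $\Delta\psi_i=\gamma_i$ is constant) propagates them to the interior.

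For $(iv)$, I note that in the laminar state $\partial_y\psi_2^0(y)=\gamma_2 y+\Lambda_2$ satisfies $\partial_y\psi_2^0(-d_2)=\Lambda_2-\gamma_2 d_2<0$, $\partial_y\psi_2^0(0)=\Lambda_2>0$, and $\partial_{yy}\psi_2^0=\gamma_2>0$. These inequalities persist for $\psi_2$ at small $s$, and the implicit function theorem delivers the smooth curve $\{(x,\xi(x))\}$ on which $\partial_y\psi_2$ vanishes, giving $(a)$. Claim $(c)$ is immediate: $\frac{d}{dx}\psi_2(x,\xi(x))=\partial_x\psi_2(x,\xi(x))>0$ by $(iii)$.

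The main obstacle is $(b)$, the strict monotonicity $\xi'<0$. Differentiating the defining relation $\partial_y\psi_2(x,\xi(x))=0$ yields $\xi'=-\partial_{xy}\psi_2/\partial_{yy}\psi_2$, so, since $\partial_{yy}\psi_2>0$, the assertion reduces to $\partial_{xy}\psi_2>0$ along the curve. As in Lemma \ref{fig1_left}, I would expand $w_2(\Lambda(s),(f,h)(s))=w_2(\Lambda_2,0)+\partial_\Lambda w_2(\Lambda_2,0)[\Lambda(s)-\Lambda_2]+\partial_{(f,h)}w_2(\Lambda_2,0)[(f,h)(s)]+O(s^2)$ in $C^{3+\alpha}_{e,per}(\overline\Omega_2)$, using the explicit Fourier representations of $\partial_f w_2$ and $\partial_h w_2$ computed in the proof of Lemma \ref{L:FD}, and push everything through the chain rule for $\psi_2=w_2\circ\Phi_{(f,h)}^{-1}$. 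The expected leading-order behaviour is $\partial_{xy}\psi_2(x,y)=s\,\kappa(y)\sin(2\pi x/L)+O(s^2)$ with $\kappa(y)>0$ on a neighbourhood of the critical level $y=-\Lambda_2/\gamma_2$. Verifying the sign of $\kappa$, which combines $\Lambda_2$, $\gamma_2-\gamma_1$, and hyperbolic functions of $R_1 d_2$, is the one calculation genuinely specific to this configuration; the rest of the proof is mechanical.
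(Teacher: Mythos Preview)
Your proposal is correct and follows exactly the route the paper takes: the paper's own proof of this lemma is nothing more than the sentence ``the proofs of Lemmas \ref{fig1_left}--\ref{fig3_right} use similar arguments, we present herein only the proof for Lemma \ref{fig1_left}'', and you have carried out precisely that adaptation, with the correct sign checks for $m^{22}_1(\Lambda_2)$, $m^{21}_1(\Lambda_2)$, and $\partial_y\psi_1^0$. One small organisational remark: your argument for $(iii)$ tacitly uses $\partial_y\psi_2\big|_{y=h}>0$ on the free surface to fix the sign of $\partial_x\psi_2$ there, which you only establish later under $(iv)$; logically that persistence fact should be noted before invoking the maximum principle for $\partial_x\psi_2$, but it is exactly the inequality $\partial_y\psi_2^0(0)=\Lambda_2>0$ you already recorded.
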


We provide now a justification for the right picture of Figure \ref{Fig2}.

\begin{lem}\label{fig2_right}
Assume that $\gamma_2>0, \gamma_1 d_1+\gamma_2 d_2\leq 0$ and let 
\[
((f,h),\psi_1,\psi_2)\in \big(C^{3+\alpha}_{per}(\mathbb{R})\big)^2\times C^{3+\alpha}_{per}\big(\, \overline{\Omega(f)}\, \big)\times C^{3+\alpha}_{per}\big(\, \overline{\Omega(f,h)}\, \big) 
\]
be a solution of \eqref{PB2} that is determined by a point $(\Lambda(s), (f,h)(s))$ on the bifurcation curve  found in Theorem \ref{MT2}.
Provided that $s$ is small enough, the following assertions are true:
 \begin{enumerate}
  \item [ $(i)$ ] $f'>0$ and $h'<0$ on $\left(0,L/2\right);$
  \item [$(ii)$ ] $\partial_x\psi_{1}>0$  in $\{(x,y)\in\Omega(f)\,:\, x\in(0,L/2)\}$; 
  \item [$(iii)$ ] $\partial_x\psi_2>0$  in  in $\{(x,y)\in\Omega(f,h)\,:\, x\in(0,L/2)\};$
   \item [$(iv)$ ] There is a smooth curve $\{(x,\xi_1(x)):x\in [0,L/2]\}$ with $-d<\xi_1(x)<-d_2+f(x)$ for all $x\in[0,L/2]$ and satisfying additionally:
 \begin{enumerate}
 \item [ $(a)$ ] Given $x\in [0,L/2]$, it holds that: $\partial_y\psi_{1}(x,\xi_1(x))=0$, $\partial_y\psi_{1}(x,y)>0$ for all $y\in[-d,\xi_1(x)), $ and $\partial_y\psi_{1}(x,y)<0$ for all $y\in(\xi_1(x), -d_2+f(x)];$
 \item [ $(b)$ ] $\xi_1$ is strictly increasing on $[0,L/2]$;
 \item [ $(c)$ ] The function $[x\mapsto \psi_1(x,\xi_1 (x))]$ is strictly increasing on $[0,L/2]$.
 \end{enumerate} 
 \item [$(v)$ ] There is a smooth curve $\{(x,\xi_2(x)):x\in [0,L/2]\}$ with $-d_2+f(x)<\xi_2(x)<h(x)$ for all $x\in[0,L/2]$ and satisfying additionally:
 \begin{enumerate}
 \item [ $(a)$ ] Given $x\in [0,L/2]$, it holds that: $\partial_y\psi_{2}(x,\xi_2(x))=0$, $\partial_y\psi_{2}(x,y)<0$ for all $y\in[-d_2+f(x),\xi_2(x)), $ and $\partial_y\psi_{2}(x,y)>0$ for all $y\in(\xi_2(x), h(x)];$
 \item [ $(b)$ ] $\xi_2$ is strictly decreasing on $[0,L/2]$;
 \item [ $(c)$ ] The function $[x\mapsto \psi_2(x,\xi_2 (x))]$ is strictly increasing on $[0,L/2]$.
 \end{enumerate} 
 \end{enumerate}
 \end{lem}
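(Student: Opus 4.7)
My plan is to follow the strategy of Lemma \ref{fig1_left} step by step, with adjustments forced by the presence of critical layers in \emph{both} strata. I would organize the argument into four steps.

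First, I would determine the signs of the Fourier coefficients $m^{22}_1(\Lambda_2)$ and $m^{21}_1(\Lambda_2)$ that appear in the leading-order expansion of $(f,h)$. Because $\Lambda_2\in(0,\gamma_2 d_2)$, formula \eqref{a22} yields $m^{22}_1(\Lambda_2)<0$, and the asymptotic limit $-m^{21}_1(\Lambda_2)/m^{22}_1(\Lambda_2)\to -\infty$ as $L\to0$, established in the proof of Theorem \ref{MT2}, forces $m^{21}_1(\Lambda_2)<0$ for $L$ small. Inserting these signs into
\[
f(s)=s\,m^{22}_1(\Lambda_2)\cos(2\pi x/L)+O(s^2),\qquad h(s)=-s\,m^{21}_1(\Lambda_2)\cos(2\pi x/L)+O(s^2),
\]
and differentiating in $x$ gives $f'>0$ and $h'<0$ on $(0,L/2)$, proving $(i)$.

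Next, for $(ii)$ and $(iii)$, I would use that $\Delta\psi_i$ is constant so $\partial_x\psi_i$ is harmonic, while evenness forces $\partial_x\psi_i=0$ on $x=0,L/2$. On the free surface, differentiating $\psi_2(x,h(x))=0$ yields $\partial_x\psi_2=-h'(x)\,\partial_y\psi_2$, and since $\partial_y\psi_2|_{y=h}$ is close to $\Lambda_2>0$, the sign $h'<0$ gives $\partial_x\psi_2>0$ there. On the interface, differentiating $\psi_i=\lambda$ gives $\partial_x\psi_i=-f'(x)\,\partial_y\psi_i$, and since $\partial_y\psi_i|_{y=-d_2+f}$ is close to $\Lambda_2-\gamma_2 d_2<0$ while $f'>0$, we again obtain $\partial_x\psi_i>0$. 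On the flat bed $\partial_x\psi_1=0$. The strong maximum principle then upgrades these boundary signs to strict positivity in the interior of the half-period, yielding $(ii)$ and $(iii)$.

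For $(iv)$ and $(v)$, I would locate the curves $y=\xi_i(x)$ via the implicit function theorem applied to $\partial_y\psi_i=0$. The laminar values
\[
\partial_y\psi_1^0(-d)=\Lambda_2-(\gamma_1 d_1+\gamma_2 d_2)>0,\qquad \partial_y\psi_1^0(-d_2)=\Lambda_2-\gamma_2 d_2<0,
\]
together with $\partial_{yy}\psi_1^0=\gamma_1<0$ (which is forced by $\gamma_1 d_1+\gamma_2 d_2\leq 0$ and $\gamma_2>0$), and the analogous signs $\partial_y\psi_2^0(0)=\Lambda_2>0$, $\partial_y\psi_2^0(-d_2)=\Lambda_2-\gamma_2 d_2<0$ with $\partial_{yy}\psi_2^0=\gamma_2>0$, persist for small $s$ and deliver unique smooth curves with the sign pattern in $(a)$. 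The monotonicity $(b)$ follows from $\xi_i'(x)=-\partial_{xy}\psi_i/\partial_{yy}\psi_i$ once the sign of $\partial_{xy}\psi_i$ is determined. For $\psi_1$, the expansion carried out in Lemma \ref{fig1_left} goes through verbatim with $\Lambda_1$ replaced by $\Lambda_2$, yielding $\partial_{xy}\psi_1>0$ on $(0,L/2)$ inside the bottom layer and, combined with $\gamma_1<0$, the claim $\xi_1'>0$. For $\psi_2$ one performs the analogous expansion, using the explicit formulas for $\partial_f w_2$ and $\partial_h w_2$ from the proof of Lemma \ref{L:FD} together with the chain rule for $\psi_2=w_2\circ\Phi_{(f,h)}^{-1}$; this yields $\partial_{xy}\psi_2>0$ on $(0,L/2)$ inside the upper layer and, combined with $\gamma_2>0$, the claim $\xi_2'<0$. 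Finally, $(c)$ in both layers follows from $(d/dx)\psi_i(x,\xi_i(x))=\partial_x\psi_i(x,\xi_i(x))$, which is strictly positive on $(0,L/2)$ by $(ii)$ and $(iii)$.

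The main obstacle will be the explicit leading-order expansion of $\partial_{xy}\psi_2$ in the upper layer. Since $\Phi_{(f,h)}^{-1}$ involves \emph{both} $f$ and $h$, the chain rule generates more terms than in the bottom-layer computation of Lemma \ref{fig1_left}, and one must verify that the contributions of the surface and interface perturbations combine to a definite sign. This is precisely where the relative magnitudes $|m^{21}_1(\Lambda_2)|\gg|m^{22}_1(\Lambda_2)|$ established in Step 1 enter, reflecting the physical fact that along this branch the surface wave dominates the internal one.
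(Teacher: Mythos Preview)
Your proposal is correct and follows exactly the approach the paper intends: the paper provides no separate proof of this lemma, stating only that ``the proofs of Lemmas \ref{fig1_left}--\ref{fig3_right} use similar arguments, we present herein only the proof for Lemma \ref{fig1_left}.'' Your four-step adaptation of that template---sign of $(m^{22}_1,m^{21}_1)$, maximum principle for $\partial_x\psi_i$, implicit function theorem for $\xi_i$, and leading-order expansion of $\partial_{xy}\psi_i$---is precisely the scheme the authors have in mind, and your identification of the signs $m^{22}_1(\Lambda_2)<0$, $m^{21}_1(\Lambda_2)<0$, $\gamma_1<0$ is accurate.

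One small refinement on your closing heuristic: the inequality $|m^{21}_1|\gg|m^{22}_1|$ alone does not settle the sign of $\partial_{xy}\psi_2$ uniformly in $y$. Near the interface $y=-d_2$ the hyperbolic weight $\cosh(R_1 y)$ attached to the $m^{22}_1$-contribution is of order $\cosh(R_1 d_2)$, which exactly compensates the exponential smallness of $m^{22}_1$, so the two contributions become comparable there. What actually secures positivity at $y=-d_2$ is the specific structure of $m^{21}_1(\Lambda_2)$, in particular its $\frac{R_1}{\tanh(R_1 d_1)}$ term coming from the \emph{bottom} layer; after the cancellations one is left with $\Lambda_2\big[\gamma_1-\gamma_2+(\gamma_2 d_2-\Lambda_2)\frac{R_1}{\tanh(R_1 d_1)}\big]>0$ for $R_1$ large. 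Away from the interface your dominance argument does carry the day. This does not affect the validity of your plan, only the justification of that last sign.
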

 
 We consider now the non laminar flows corresponding to the  bifurcation solutions found in Theorem \ref{MT3} and prove the following result which justifies the left picture of Figure \ref{Fig3}.
 
\begin{lem}\label{fig3_left}
Assume that $\gamma_2>0, \gamma_1 d_1+\gamma_2 d_2<0$ and let 
\[
((f,h),\psi_1,\psi_2)\in \big(C^{3+\alpha}_{per}(\mathbb{R})\big)^2\times C^{3+\alpha}_{per}\big(\, \overline{\Omega(f)}\, \big)\times C^{3+\alpha}_{per}\big(\, \overline{\Omega(f,h)}\, \big) 
\]
be a solution of \eqref{PB2} that is determined by a point $(\Lambda(s), (f,h)(s))$ on the bifurcation curve  found in Theorem \ref{MT3}.
Provided that $s$ is small enough, the following assertions are true:
 \begin{enumerate}
  \item [ $(i)$ ] $f'<0$ and $h'<0$ on $\left(0,L/2\right);$
  \item [$(ii)$ ] $\partial_x\psi_{2}<0$  in $\{(x,y)\in\Omega(f,h)\,:\, x\in(0,L/2)\}$ and $\partial_y\psi_{2} <0$ in  $\Omega(f,h);$ 
  \item [$(iii)$ ] $\partial_x\psi_1<0$  in   $\{(x,y)\in\Omega(f)\,:\, x\in(0,L/2)\};$
 \item [$(iv)$ ] There is a smooth curve $\{(x,\xi(x)):x\in [0,L/2]\}$ with $-d<\xi(x)<-d_2+f(x)$ for all $x\in[0,L/2]$ and satisfying additionally:
 \begin{enumerate}
 \item [ $(a)$ ] Given $x\in [0,L/2]$, it holds that: $\partial_y\psi_{1}(x,\xi (x))=0$, $\partial_y\psi_{1}(x,y)>0$ for all $y\in[-d,\xi(x)), $ and $\partial_y\psi_{1}(x,y)<0$ for all $y\in(\xi(x), -d_2+f(x)];$
 \item [ $(b)$ ] $\xi$ is strictly increasing on $[0,L/2]$;
 \item [ $(c)$ ] The function $[x\mapsto \psi_1(x,\xi (x))]$ is strictly decreasing on $[0,L/2]$.
 \end{enumerate} 
  \end{enumerate} 
\end{lem}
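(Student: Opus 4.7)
The plan is to mimic the proof of Lemma \ref{fig1_left} step by step, with all sign choices recomputed for the regime $\Lambda(0)=\Lambda_3<0$ and for a critical layer in the \emph{bottom} rather than the top layer. Since Theorem \ref{MT3} gives the expansion
\[
(f,h)(s)=s\bigl(m^{22}_1(\Lambda_3),-m^{21}_1(\Lambda_3)\bigr)\cos(2\pi x/L)+O(s^2),
\]
the first task is to pin down the signs of $m^{22}_1(\Lambda_3)$ and $m^{21}_1(\Lambda_3)$. From \eqref{a22} and $\Lambda_3<0$ we get $m^{22}_1(\Lambda_3)>0$, and from \eqref{a21} together with $\Lambda_3-\gamma_2 d_2<0$ and (for $L\leq L_0$ small) large $R_1$, the term $(\Lambda_3-\gamma_2 d_2)[R_1/\tanh(R_1 d_1)+R_1/\tanh(R_1 d_2)]$ dominates and is very negative, so $-m^{21}_1(\Lambda_3)>0$. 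For $s>0$ small, the expansion then yields $f'<0$ and $h'<0$ on $(0,L/2)$, giving $(i)$.

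For $(ii)$, note that $\partial_y\psi^0_2=\gamma_2 y+\Lambda_3$ is bounded above by $\Lambda_3<0$ on $[-d_2,0]$, so $\partial_y\psi_2<0$ in $\Omega(f,h)$ for small $s$. The sign of $\partial_x\psi_2$ follows from the standard maximum principle argument: since $\omega$ is constant, $\partial_x\psi_2$ is harmonic; on $x=0$ and $x=L/2$ it vanishes by evenness; on the top and bottom boundaries of $\Omega(f,h)$ the relation $\partial_x\psi_2=-h'\partial_y\psi_2$ (resp.\ $=-f'\partial_y\psi_2$) combined with $(i)$ and $\partial_y\psi_2<0$ gives $\partial_x\psi_2<0$. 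The same reasoning applied to $\psi_1$ (whose sign of $\partial_x\psi_1$ on the boundaries follows from $f'<0$ and the sign of $\partial_y\psi_1$ there) yields $(iii)$.

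For $(iv)$, I would first exhibit the critical level inside the unperturbed bottom layer: a direct computation of $\psi^0_1$ using $\lambda=d_2(\gamma_2 d_2/2-\Lambda)$ and $m$ from \eqref{FK} gives
\[
\partial_y\psi^0_1(-d)=\Lambda_3-\gamma_1d_1-\gamma_2d_2,\qquad \partial_y\psi^0_1(-d_2)=\Lambda_3-\gamma_2 d_2.
\]
Under the hypothesis $\gamma_1d_1+\gamma_2d_2<0$ and for $L\leq L_0$ small enough (so $\Lambda_3$ close to $0$), the first is positive and the second negative; moreover $\partial_{yy}\psi^0_1=\gamma_1<0$, so there is a unique $y_0\in(-d,-d_2)$ with $\partial_y\psi^0_1(y_0)=0$. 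Transferring this to $\psi_1$ via the implicit function theorem (using $\partial_{yy}\psi_1\approx\gamma_1<0$), I obtain the smooth curve $\xi$ satisfying $(a)$ and the identity
\[
\partial_{xy}\psi_1(x,\xi(x))+\xi'(x)\,\partial_{yy}\psi_1(x,\xi(x))=0.
\]
For $(b)$, the main computation to recycle is the leading-order expansion from Lemma \ref{fig1_left},
\[
\partial_{xy}\psi_1=-s\,m^{22}_1(\Lambda_3)\,R_1\,\Lambda_3\,\frac{\cosh(R_1(d+y))}{\sinh(R_1 d_1)}\sin(R_1 x)+O(s^2),
\]
which in our sign regime ($m^{22}_1(\Lambda_3)>0$, $\Lambda_3<0$) is \emph{positive} on $(0,L/2)$; combined with $\partial_{yy}\psi_1<0$ this gives $\xi'>0$. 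Finally $(c)$ is immediate: $(d/dx)\psi_1(x,\xi(x))=\partial_x\psi_1(x,\xi(x))<0$ by $(iii)$, since $\partial_y\psi_1(x,\xi(x))=0$.

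The main obstacle is not an unexpected step but the bookkeeping of signs: one must carefully check that every sign inherited from $\Lambda_3<0$ (instead of $\Lambda_1>\gamma_2d_2$) flips the right quantities so that the critical layer lands in the bottom rather than in the top, and that the monotonicity of $\xi$ has the opposite sign from Lemma \ref{fig1_left}. Everything else reduces to elliptic regularity, maximum principles, and the same expansion of $\partial_{xy}\psi_1$ that was already derived.
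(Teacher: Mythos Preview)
Your proposal is correct and follows exactly the approach the paper intends: the authors explicitly state that Lemmas \ref{fig1_right}--\ref{fig3_right} are proved by the same arguments as Lemma \ref{fig1_left}, and your sign bookkeeping (with $\Lambda_3<0$, $m^{22}_1(\Lambda_3)>0$, $m^{21}_1(\Lambda_3)<0$, and $\partial_{yy}\psi_1\approx\gamma_1<0$) correctly reproduces the claimed monotonicities. One minor point worth making explicit in your write-up of $(iii)$ is that on the flat bed $y=-d$ one has $\partial_x\psi_1=0$ identically, and on the interface you should invoke the transmission condition $\partial_y\psi_1=\partial_y\psi_2<0$ to get the boundary sign; otherwise your argument is complete.
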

Finally, we have the following result which justifies the right picture of Figure \ref{Fig3}.

\begin{lem}\label{fig3_right}
Assume that $\gamma_2=0$, $\gamma_1<0,$ and let 
\[
((f,h),\psi_1,\psi_2)\in \big(C^{3+\alpha}_{per}(\mathbb{R})\big)^2\times C^{3+\alpha}_{per}\big(\, \overline{\Omega(f)}\, \big)\times C^{3+\alpha}_{per}\big(\, \overline{\Omega(f,h)}\, \big) 
\]
be a solution of \eqref{PB2} that is determined by a point $(\Lambda(s), (f,h)(s))$ on one of  the bifurcation curves  found in Theorems \ref{MT4}-\ref{MT5}.
Then, the assertions from Lemma \ref{fig3_left} hold verbatim.
\end{lem}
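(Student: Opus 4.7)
The strategy is to carry over the proof scheme of Lemma \ref{fig3_left} (itself modelled on Lemma \ref{fig1_left}) to the regime $\gamma_2=0$, $\gamma_1<0$, handling bifurcations from $\Lambda_2$ and $\Lambda_3$ uniformly. Both $\Lambda_j:=\Lambda_j(2\pi/L)$, $j\in\{2,3\}$, are negative for small $L$ by Lemma \ref{L:a}, so $m^{22}_1(\Lambda_j)=-\Lambda_j R_1/\sinh(R_1 d_2)>0$, while from \eqref{a21} with $\gamma_2=0$,
\[
 m^{21}_1(\Lambda_j) = -\gamma_1 + \Lambda_j\Big(\frac{R_1}{\tanh(R_1 d_1)}+\frac{R_1}{\tanh(R_1 d_2)}\Big)
\]
is dominated by the negative second term as $L\to 0$: for $j=3$ one has $\Lambda_3 R_1\to-\infty$ by \eqref{LL3}, while for $j=2$, $\Lambda_2 R_1\to 5\gamma_1/6$ and hence $m^{21}_1(\Lambda_2)\to 2\gamma_1/3<0$. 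Consequently $-m^{21}_1(\Lambda_j)>0$ as well, so the leading-order term in $(f,h)(s)$ yields $f'<0$ and $h'<0$ on $(0,L/2)$ for small $s>0$, which is assertion (i).

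For (ii) and (iii), the decisive simplification is that $\gamma_2=0$ gives $\partial_y\psi_2^0\equiv\Lambda_j<0$ throughout $\overline{\Omega}_2$, so $\partial_y\psi_2<0$ on $\overline{\Omega(f,h)}$ for small $s$. In particular $\partial_y\psi_2$ never vanishes in the top layer, which forces the critical layer to lie entirely in $\Omega(f)$. The strict negativity of $\partial_x\psi_2$ on $\{x\in(0,L/2)\}\cap\Omega(f,h)$ and of $\partial_x\psi_1$ on $\{x\in(0,L/2)\}\cap\Omega(f)$ follows exactly as in Lemma \ref{fig1_left}: evaluating on each side of the rectangular half-period strip (using (i), evenness in $x$, and the constancy of $\psi_i$ on the interfaces) one obtains $\partial_x\psi_i\leq 0$ on the boundary with strict inequality on the appropriate horizontal edge, after which the strong maximum principle applied to the harmonic function $\partial_x\psi_i$ delivers the strict interior inequality.

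For (iv), the key is that $\partial_{yy}\psi_1^0=\gamma_1<0$, so $y\mapsto\partial_y\psi_1^0(y)$ is strictly decreasing on $[-d,-d_2]$, with $\partial_y\psi_1^0(-d_2)=\Lambda_j<0$ and $\partial_y\psi_1^0(-d)=\Lambda_j-\gamma_1 d_1>0$ by the stagnation condition $\Lambda_j>\gamma_1 d_1$ in \eqref{Eq1}. These features persist for $\psi_1$ at small $s$, and the implicit function theorem produces a smooth $\xi$ with $\partial_y\psi_1(x,\xi(x))=0$ satisfying the sign pattern in (a). Differentiating that defining equation gives
\[
 \xi'(x)=-\frac{\partial_{xy}\psi_1(x,\xi(x))}{\partial_{yy}\psi_1(x,\xi(x))},
\]
so it remains to determine the sign of $\partial_{xy}\psi_1$ at leading order. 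Repeating the chain-rule computation in the proof of Lemma \ref{fig1_left} with $\psi_1=w_1\circ\Phi_f^{-1}$ and the explicit Fourier coefficient $C_1(y)$ from the proof of Lemma \ref{L:FD}, one obtains
\[
 \partial_{xy}\psi_1=-s\,m^{22}_1(\Lambda_j)\,R_1\,\Lambda_j\,\frac{\cosh(R_1(d+y))}{\sinh(R_1 d_1)}\,\sin(R_1 x)+O(s^2),
\]
which, since $m^{22}_1(\Lambda_j)>0$ and $\Lambda_j<0$, is strictly positive on $(0,L/2)\times[-d,-d_2]$ for small $s>0$. Combined with $\partial_{yy}\psi_1<0$, this yields $\xi'>0$, i.e. (b). Finally, along the curve $\partial_y\psi_1=0$, $\frac{d}{dx}\psi_1(x,\xi(x))=\partial_x\psi_1(x,\xi(x))<0$ by (iii), proving (c).

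The main obstacle I anticipate is the bookkeeping of signs in the leading-order expression for $\partial_{xy}\psi_1$ and verifying that the displayed formula, with the same sign, is valid uniformly for $\Lambda_j\in\{\Lambda_2,\Lambda_3\}$ and in both alternatives of Theorem \ref{MT5}; in case (ii) of that theorem the assertion is inherited from a bifurcation from $\Lambda_3$ at wavelength $L/k$, so one merely needs to observe that the rescaling $L\mapsto L/k$ disturbs no inequality. Apart from this sign check, every step is a verbatim transcription of the proof of Lemma \ref{fig3_left}, and the $\gamma_2=0$ case is in fact somewhat easier because $\partial_y\psi_2^0$ is now constant, which trivialises the top-layer analysis.
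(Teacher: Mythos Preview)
Your proposal is correct and follows exactly the approach the paper intends: the paper provides no separate proof for this lemma, stating only that the proofs of Lemmas \ref{fig1_left}--\ref{fig3_right} use similar arguments and presenting in detail only that of Lemma \ref{fig1_left}. Your sign computations for $m^{22}_1(\Lambda_j)$ and $m^{21}_1(\Lambda_j)$, the observation that $\partial_y\psi_2^0\equiv\Lambda_j<0$ trivialises the top layer, and the leading-order formula for $\partial_{xy}\psi_1$ are the precise adaptations required, and your treatment of case (ii) in Theorem \ref{MT5} via the rescaling $L\mapsto L/k$ is the right way to close that loose end.
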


\vspace{0.5cm}
\noindent{\bf Acknowledgements} 
The authors thank the anonymous referees for  the comments and suggestions which have improved the quality of the article.

\end{document}